\newtheorem{theorem}{Theorem}[section]
\newtheorem{corollary}[theorem]{Corollary}
\newtheorem{lemma}[theorem]{Lemma}
\newtheorem{proposition}[theorem]{Proposition}
\newtheorem*{theorem no number}{Theorem}
\theoremstyle{definition}
\newtheorem{definition}[theorem]{Definition}
\newtheorem*{acknow}{Acknowledgements}
\newtheorem*{notation}{Notation}
\newtheorem*{convention}{Convention}
\newtheorem{remark}[theorem]{Remark}
\newtheorem*{remark no number}{Remark}
\theoremstyle{remark}
\newtheorem*{claim no number}{Claim}
\newcommand{\R}{{\mathbb{R}}}
\newcommand{\Z}{{\mathbb{Z}}}
\newcommand{\N}{{\mathbb{N}}}
\newcommand\restr[2]{{% we make the whole thing an ordinary symbol
  \left.\kern-\nulldelimiterspace % automatically resize the bar with \right
  #1 % the function
  \vphantom{\big|} % pretend it's a little taller at normal size
  \right|_{#2} % this is the delimiter
  }}
\title{\textbf{A Bangert--Hingston Theorem for Starshaped Hypersurfaces}}
\author{Alessio Pellegrini \\ \\ \textit{Department of Mathematics, ETH Z\"urich, Switzerland} \\ alessio.pellegrini@math.ethz.ch}
\begin{document}

\maketitle \date

\abstract{Let $Q$ be a closed manifold with non-trivial first Betti number that admits a non-trivial $S^1$-action, and $\Sigma \subseteq T^*Q$ a non-degenerate starshaped hypersurface. We prove that the number of geometrically distinct Reeb orbits of period at most $T$ on $\Sigma$ grows at least logarithmically in $T$.}

\section{Introduction}

In a celebrated paper of Bangert and Hingston \cite{bh1984} it is shown that the number of geometrically distinct closed geodesics on a closed connected manifold $Q$ grows like the prime numbers whenever $\pi_1(Q) \cong \Z$ and $\dim Q \geq 2$. More precisely, if $\mathcal{N}(T)$ denotes the number of geometrically distinct geodesics of period at most $T$, then $$\liminf_{T \to \infty}\mathcal{N}(T) \cdot \frac{\log(T)}{T}>0.$$ Bangert and Hingston's proof makes use of minimax values associated to the energy functional $$\mathcal{E} \, \colon \mathcal{L}Q \to \R, \; \mathcal{E}(q)=\int_0^1\frac{1}{2}  \Vert \dot{q}(t) \Vert^2 \, dt$$ and two sequences of homotopy classes on the free loop space $\mathcal{L}Q$. Thanks to the Legendre transform, critical points of $\mathcal{E}$, i.e.\ geodesics, are in one to one correspondence with the critical points of the Hamiltonian action functional $\mathcal{A}_H \, \colon \mathcal{L}T^*Q \to \R$ associated to the kinetic Hamiltonian $H(q,p)=\frac{1}{2} \Vert  p \Vert^2$. In particular, the quest for closed geodesics can be rephrased as a Hamiltonian problem, and if one wishes to fix the constant speed of the geodesics to be $1$, the problem becomes a quest for periodic Reeb orbits on the unit sphere bundle $$H^{-1}\big(1/2\big)=S^*Q.$$ 

The Hamiltonian, Lagrangian, and Reeb approach are essentially equivalent and are still interchangeable for general convex hypersurfaces in $T^*Q$.  However, if $\Sigma \subseteq T^*Q$ is \textbf{starshaped}, there is no Legendre transform in general and therefore the notion of Reeb orbits does not admit a Lagrangian reformulation. Nevertheless, the attempt to carry over well-known geodesic growth type results to the realm of contact geometry on $\Sigma$ has been quite successful. Indeed, many striking results where the topology of $Q$ (or its loop space $\mathcal{L}Q$) forces certain geodesic growths have found an analogous formulation for growth rates of Reeb orbits/chords/leaf-wise intersections on $\Sigma$ \cite{ms2011, heistercamp2011, mmp2012, wullschleger2014}. A common theme throughout these results is the use of some flavour of Floer theory and its relation to the singular homology of the loop space $\mathcal{L}Q$.

Prior to the work of Bangert and Hingston, several geodesic growth results had been established in the finite fundamental group case
 \cite{gm1969, gromov1978, btz1981, bz1982}, while in the infinite abelian case very little was known, especially for the ``smallest" case, i.e.\ $\pi_1(Q) \cong \Z$. The starshaped setting encounters the same issue, as for $\pi_1(Q)=\Z$ there is no known result, to the best of the author's knowledge, on the growth of periodic Reeb orbits of $\Sigma \subseteq T^*Q$. More precisely, if $\mathcal{N}_\Sigma(T)$ denotes the number of geometrically distinct periodic Reeb orbits of period at most $T$, then the behaviour of $\mathcal{N}_\Sigma(T)$, as $T$ goes to infinity, is currently unknown. The following result partially fills this gap.

\begin{theorem no number}[Main Theorem]\label{theorem Main Theorem Introduction}
 Let $Q$ be a $n$-dimensional closed connected manifold with $n \geq 2$, whose first Betti number is non-trivial. Assume that $Q$ admits a non-trivial $S^1$-action.
 Then for any non-degenerate starshaped hypersurface $\Sigma \subseteq T^*Q$ it holds:
 $$\liminf_{T \to \infty}\frac{\mathcal{N}_\Sigma(T)}{\log(T)} >0.$$
 In particular, $\Sigma$ admits infinitely many geometrically distinct Reeb orbits.
 \end{theorem no number}

The conclusion of the Main Theorem holds for a number of examples, e.g. products $Q=S^1 \times M$ with $M$ a closed manifold, $H$-spaces $Q$ with infinite fundamental group, and principal $S^1$-bundles $Q$ over a closed connected base manifold $B$ with $\pi_2(B)=0$ --- see Theorem \ref{theorem main section existence} and Corollary \ref{corollary Sigma non-degenerate} for more details.

We will now present an outline of the proof of the Main Theorem and explain the various assumptions as we go along. The main ingredient is the use of \textbf{spectral invariants} $$c_\alpha(H) \in \R, \quad \alpha \in H_\bullet(\mathcal{L}Q)\setminus\{0\},$$
where $H \, \colon T^*Q \to \R$ is an autonomous asymptotic quadratic Hamiltonian in the sense of Abbondandolo--Schwarz \cite{as2006}. Let $F \, \colon T^*Q \to \R$ be a quadratic Hamiltonian with $\Sigma=F^{-1}(1)$, and let $G$ be a kinetic Hamiltonian together with a constant $\sigma>1$ such that
$$G \leq F \leq \sigma G.$$
Then the spectral invariants are related to each other via
$$\frac{1}{\sigma} \cdot c_\alpha(G) \leq c_\alpha(F) \leq c_\alpha(G).$$ This is based on a \textbf{pinching argument} due to Macarini and Schlenk \cite{ms2011}, which also plays a key role in the arguments of the aforementioned papers \cite{heistercamp2011, mmp2012, wullschleger2014}. 

The Betti number assumption ensures the existence of a homotopy class $\eta \in \pi_1(Q)$, whose image in $H_1(Q;\Z)$ is of infinite order. Denote by $\mathcal{L}_mQ$ the connected component of the loop space associated to $\eta^m$ for $m \in \Z$. We say that a smooth $S^1$-action $\phi \, \colon S^1 \times Q \to Q$ is \emph{non-trivial} with respect to $\eta$, if the loop $\gamma_q(t):=\phi(t,q)$ belongs to $\mathcal{L}_kQ$ for some integer $k \neq 0$.\footnote{Note that any two orbits of $\phi$ have the same free homotopy class since $Q$ is assumed to be connected.} In particular, the map
$$s \, \colon Q \longrightarrow \mathcal{L}_kQ, \; s(q):= \gamma_q$$
defines a section of the evaluation map $\mathrm{ev}_0 \, \colon \mathcal{L}Q \to Q$. The existence of the section enables us to define a sequence of non-zero homology classes $\alpha_m$ on pairwise distinct components of $\mathcal{L}Q$ as follows: define the $\mathbf{m}$\textbf{-iteration map}
$$\mathcal{I}_m \, \colon \mathcal{L} Q \longrightarrow \mathcal{L}Q, \quad \gamma \mapsto \underbrace{\gamma * \dots * \gamma}_{m\text{-times}}.$$
Observe that the map 
$$\mathcal{I}_m \circ s  \, \colon Q \longrightarrow \mathcal{L} Q$$
maps into $\mathcal{L}_{mk}Q$ and is still a section of $\mathrm{ev}_0 \, \colon \mathcal{L}Q \to Q$.
By functoriality we therefore have a commutative diagram of the form

\begin{center}
\begin{tikzcd}
H_n(Q) \arrow[r,"(s)_*"] \arrow[rrr,"\mathrm{id}",bend right=25] & H_n(\mathcal{L}_k Q) \arrow[r,"(\mathcal{I}_m)_*"] & H_n(\mathcal{L}_{mk}Q) \arrow[r,"(\mathrm{ev}_0)_*"] & H_n(Q) 
\end{tikzcd}
\end{center}
Since $H_n(Q) \neq0$,\footnote{We are implicitly working with $\Z_2$-coefficients.} there exists a non-zero cohomology class $\alpha_1 \in H_n(\mathcal{L}_kQ)$, together with a sequence of non-zero homology classes
$$ \alpha_m\, :=(\mathcal{I}_m)_*\alpha_1 \in H_n(\mathcal{L}_{mk}Q) \setminus \{0\}, \quad \forall m \in \N.$$
By spectrality we know that there exists a sequence of Hamiltonian orbits $x_m$ of $F$ satisfying $$c_{\alpha_m}(F)=\mathcal{A}_F(x_m).$$ These orbits $x_m$ correspond to periodic Reeb orbits on $\Sigma$ by choice of $F$.
Using a Robbin--Salamon \textbf{index growth result} due to M. de Gosson, S. de Gosson and Piccione \cite{ggp2008} we can extract a subsequence, still denoted by $x_m$, such that the $x_m$ are not iterates of each other --- this is the only part that uses the non-degeneracy of $\Sigma$. The precise growth rate of the Reeb orbits $x_m$ in terms of their action (and thus their period) is determined by using the spectral invariant inequality from above. This concludes the outline of the proof.

Under the $S^1$-action assumption, Irie's Main Theorem \cite{irie2014} on the finiteness of the Hofer--Zehnder capacity of disk cotangent bundles $D^*Q$ is applicable, however this is not particularly fruitful when applied to the function $F$ due to homogeneity: the finiteness of the Hofer--Zehnder capacity of $D^*Q$ implies dense existence of orbits nearby $\Sigma=F^{-1}(1)$ \cite{hz2012}, but it could happen that these orbits all correspond to a single Reeb orbit on $\Sigma$. To make matters worse, no growth rate control can be deduced even if we knew that there are infinitely many Reeb orbits. In this sense, the Main Theorem can be viewed as a strengthening to the dynamical consequences of Irie's Theorem in the case of starshaped hypersurfaces.

Let us mention that there is no need to bound the pinching factor $\sigma$ in our arguments. This seems to be a reoccurring  dichotomy between starshaped hypersurfaces in $T^*Q$ (with $Q$ closed) and (compact) starshaped hypersurfaces in $\R^{2n}$, where for the latter the pinching factor $\sigma$ plays a more crucial role \cite{girardi1984,blmr1985,viterbo1989,ekeland2012,agd2016,wang2016, dl2017,am2017}.

The methods in BH (Bangert and Hingston) \cite{bh1984} are not well suited to our setting since spectral invariants can only be assigned to \emph{homology classes}, while in the Lagrangian case, as done in BH, minimax values can be assigned to homotopy classes as well. This issue can be ascribed to the lack of a global flow for the negative gradient of the Hamiltonian action functional, which is in contrast to the existence of a global flow of the negative gradient of the Lagrangian action functional. Even under further assumptions, e.g. that the relevant homotopy groups inject into the free loop space homology via the Hurewicz homomorphism, we have not been able to adapt BH's arguments.

\begin{remark}
Let us emphasise that non-degeneracy of $\Sigma$ is a $C^\infty$-generic property; cf.\ \cite[Theorem 2.5]{mmp2012}. While in the $C^1$-generic case stronger dynamical conclusions than Theorem \ref{theorem Main Theorem Introduction} can be drawn without any assumptions on $\Sigma$, for instance using results due to Newhouse \cite[Section 5 and 6]{newhouse2020} and Smale \cite{smale2015}, yielding a horseshoe and hence exponential growth of the closed orbits,  the existence of infinitely many orbits seems to be unknown in the general $C^\infty$-generic case.
\end{remark}

\begin{remark}
 In the case that $\Sigma$ is convex, the Lagrangian formulation is sufficient to run the proof strategy of \cite{bh1984}, also see \cite{bj2008} for the Finsler case. This gives a better growth rate, namely $T / \log(T)$, without any non-degeneracy assumptions. The present work is part of the author's PhD thesis, in which we also address the case of $\Sigma$ convex without the non-degeneracy assumption --- this gives some further insights into the key differences between starshapedness and convexity.
\end{remark}

\begin{acknow}
I would like to thank Will Merry for all the helpful discussions and for suggesting this project, Marco Mazzucchelli and Felix Schlenk for many helpful comments, and the anonymous referee for helping improve the whole exposition. This work has been partially supported by the Swiss National Science Foundation (grant $\# 182564$).
\end{acknow}

\section{Reeb Orbits and Starshaped Domains}\label{section Reeb Orbits on Starshaped Domains}

Let $Q$ be a closed connected $n$-dimensional manifold. The $2n$-dimensional cotangent bundle $T^*Q$ is equipped with the symplectic form $\omega=d\lambda$
where $\lambda$ is the Liouville form that is locally given by $p \, dq$. For any Hamiltonian $$H_t \, \colon T^*Q \longrightarrow \R$$ we define the Hamiltonian vector field $X_{H_t}$ via
$$\omega(\cdot,X_{H_t})=dH_t.$$  Moreover, an almost complex structure $J$ on the manifold $T^*Q$ is said to be compatible if $g_J\, :=\omega( \, \cdot \, , J \cdot \,)$ defines a Riemannian metric. We define the Hamiltonian action functional $\mathcal{A}_H$ on loops $x\, \colon S^1 \to T^*Q$:
$$\mathcal{A}_{H}(x)\, :=\int_x \lambda - \int_0^1 H_t(x(t)) \, dt.\footnote{The whole sign convention agrees with the one of \cite{as2006}.}  $$
The set of $1$-periodic Hamiltonian orbits of $H_t$ is denoted by $\mathcal{P}(H)$, and $\mathcal{P}^{I}(H)$ denotes the subset of $\mathcal{P}(H)$ of orbits $x$ with action $\mathcal{A}_H(x) \in I \subseteq \R$. For $(-\infty,d]$ we abbreviate
$\mathcal{P}^d(H)=\mathcal{P}^{(-\infty,d]}(H)$.

Let $\Sigma \subseteq T^*Q$ be a smooth, connected hypersurface with $\Sigma=\partial D$, where $D \subseteq T^*Q$ is a bounded domain $D$ that contains the zero section. Such a $\Sigma$ is called \textbf{starshaped}, if in each fibre $T_q^*Q$ the set $\Sigma_q\, :=T_q^*Q \cap \Sigma$ is starshaped with respect to the origin $0_q \in T^*_qQ$. Observe that $\Sigma$ is necessarily compact. 

Any starshaped $\Sigma$ is of restricted contact type with a contact form $\alpha_\Sigma$ defined as follows: for each point $q$ and $p \in T_q^*Q$ one can consider the path $\gamma_{(q,p)}(t)=t \cdot p \in T_q^*Q $ and define the Liouville vector field via
$$Y(q,p)\, :=\dot{\gamma}_{(q,p)}(1) \in T_{(q,p)}T^*Q.\footnote{Indeed, one can check that locally $Y=p\frac{\partial}{\partial p}$, thus $\omega(Y,\cdot)=p \, dq=\lambda$.}$$  By definition, $Y|_\Sigma$ is outward pointing and transverse to $\Sigma$, thus implying that $\Sigma$ is of restricted contact type with contact form 
$$\alpha_\Sigma\, :=\omega(Y, \cdot \, |_{T\Sigma})=\lambda |_{T\Sigma}$$ 
and contact structure 
$$\xi\, :=\ker \alpha_\Sigma.$$

Let us translate the quest of finding Reeb orbits on $(\Sigma,\xi)$ into a Hamiltonian problem. Since $\Sigma$ is assumed to be starshaped, there exists a $2$-homogeneous Hamiltonian $$F \, \colon T^*Q \longrightarrow \R$$ that is uniquely defined by requiring
$$F^{-1}(1)=\Sigma, \quad F(q,sp)=s^2F(q,p), \quad \forall s \geq 0, \, \forall (q,p) \in T^*Q.\footnote{While $F$ is smooth away from the zero section, we can only expect $C^1$ regularity near $0$ --- we will remedy this issue later by composing $F$ with an auxiliary function $f \, \colon \R \to \R$ that smooths out $F$ without affecting the set of non-constant $1$-periodic orbits; cf.\ Proposition \ref{proposition precomposing with f does not change orbits}.}$$ Furthermore,
$$\iota_{X_F |_\Sigma}d\alpha_\Sigma=\iota_{X_F |_\Sigma} d\left( \lambda |_{T\Sigma}\right)=-dF|_{T\Sigma}$$ and since $F$ is constant along $\Sigma$ we have $\iota_{X_F |_\Sigma}d\alpha_\Sigma=0$. Similarly, we know that $X_F(x)$ is a vector in $T_x\Sigma$ for all $x \in \Sigma$, in particular using homogeneity of $F$ we deduce
$$\alpha_\Sigma(X_F(x))=\lambda(X_F(x))=2 \cdot F(x) \equiv 2, \quad \forall x \in \Sigma,$$
and with the above we therefore obtain  $$X_F |_\Sigma= 2 \cdot R.$$
In particular, for $\varphi_F^t$ (resp. $\varphi_R^t)$ the time-$t$ Hamiltonian flow of $F$ (resp. Reeb flow) we get
$$\varphi^t_F(x)=\varphi_R^{2t}(x), \quad \forall x \in \Sigma, \, t \in \R.$$

\begin{definition}
Denote $\mathcal{O}_R(t)$ the set of Reeb orbits of period less or equal $t$.\footnote{We identify two orbits that are equal up to a time-shift.} Similarly, $\mathcal{O}_{X_F |_\Sigma}(t)$ (resp. $\mathcal{O}_{\mathcal{A}_F}(t)$) denotes the set of Hamiltonian orbits of $X_F |_\Sigma$ (resp. $X_F$) of period less or equal $t$ (resp. action less or equal $t$).
\end{definition}
The above flow relation implies
 $$\# \mathcal{O}_{X_F|_\Sigma}(t) = \# \mathcal{O}_R(2t).$$

When doing Floer homology, we will consider only $1$-periodic orbits, but across the whole cotangent bundle. The next proposition shows that fixing the level and letting the period vary has the same effect as fixing the period and varying the level. First we need some notation. Let $x=(q,p) \in T^*Q$. Then for every $s \in \R$ define
$$s  \cdot x=(q,sp) \in T^*Q.$$
For a loop $x=(q,p)$ on $T^*Q$ define
$$x_s(t)\, :=s\cdot x(st)=(q(st),s \cdot p(st)).$$

The following result is contained in \cite{heistercamp2011}.

\begin{proposition}\label{proposition 1-per somewhere is same as per but on sigma}
There is a one to one correspondence between $1$-periodic orbits $x \, \colon S^1 \to T^*Q$ of $X_F$ with action $\mathcal{A}_F(x)=a$, and $\sqrt{a}$-periodic orbits $y$ of $X_F |_\Sigma$. The correspondence is given by
$$x \mapsto x_{1 / \sqrt{a}}, \quad y \mapsto y_{\sqrt{a}}.$$
\end{proposition}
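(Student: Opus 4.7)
The proof will be a direct computation exploiting the $2$-homogeneity of $F$. The plan is to verify that the rescaling $x \mapsto x_s$ converts orbits of $X_F$ into reparametrised orbits of $X_F$, then pick the correct $s$ so the new orbit lands on $\Sigma$, then match periods and actions.

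First I would record the two key homogeneity identities that follow from $F(q,sp)=s^2F(q,p)$: differentiating in $p$ and $q$ gives
$$\partial_p F(q,sp)=s\,\partial_p F(q,p), \quad \partial_q F(q,sp)=s^2\,\partial_q F(q,p).$$
Using the local expression $X_F=\partial_p F\cdot \partial_q - \partial_q F\cdot\partial_p$, a short calculation shows that if $x=(q,p)$ satisfies $\dot x=X_F(x)$, then $x_s(t)=(q(st),s\,p(st))$ satisfies
$$\dot x_s(t)=\bigl(s\,\partial_pF(x(st)),\,-s^2\,\partial_qF(x(st))\bigr)=X_F(x_s(t)),$$
so $x_s$ is again an orbit of $X_F$. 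Time reparametrisation yields the period transformation: if $x$ has period $T$ then $x_s$ has period $T/s$.

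Second, I would compute the action of a $1$-periodic orbit $x$ of $X_F$. Since $F$ is preserved along $x$, the value $F(x(t))\equiv c$ is constant. Euler's relation for $2$-homogeneous Hamiltonians gives
$$\lambda(X_F)=p\cdot \partial_p F=2F,$$
so $\int_x\lambda=\int_0^1 2F(x(t))\,dt=2c$, and hence $\mathcal{A}_F(x)=2c-c=c=F(x(0))$. Therefore an orbit has action $a$ if and only if it lies on the level set $F^{-1}(a)$.

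Third, I would put the two pieces together. To push $x$ onto $\Sigma=F^{-1}(1)$ using the rescaling, I need $F(x_s(0))=s^2 F(x(0))=s^2 a=1$, i.e.\ $s=1/\sqrt{a}$. By the first step, $x_{1/\sqrt a}$ is then an orbit of $X_F$ lying entirely in $\Sigma$, hence an orbit of $X_F|_\Sigma$, and by the period formula it has period $1/s=\sqrt a$. Conversely, given a $\sqrt a$-periodic orbit $y$ of $X_F|_\Sigma$, the loop $y_{\sqrt a}(t)=\sqrt a\cdot y(\sqrt a\, t)$ is a $1$-periodic orbit of $X_F$ with $F(y_{\sqrt a}(0))=a\cdot F(y(0))=a$, hence action $a$ by the action computation. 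Since the two assignments $x\mapsto x_{1/\sqrt a}$ and $y\mapsto y_{\sqrt a}$ are manifestly inverse to each other (one checks $(x_s)_{1/s}=x$), we obtain the claimed bijection.

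The only mildly delicate point is the regularity of $F$ at the zero section (noted in the footnote above the proposition), but since we are dealing with $1$-periodic orbits of positive action $a>0$, these orbits avoid the zero section and the homogeneity computations are carried out in the smooth locus of $F$. Thus no genuine obstacle arises; the main work is simply bookkeeping the scaling factor $s=1/\sqrt a$ consistently between time, fibre, and action.
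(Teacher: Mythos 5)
Your computation is correct: the two homogeneity identities, the conservation of $F$ along orbits together with Euler's relation $\lambda(X_F)=2F$ (giving $\mathcal{A}_F(x)=F(x(0))$), and the bookkeeping of period and level under $x\mapsto x_s$ all check out, and your remark that orbits of positive action avoid the zero section disposes of the regularity caveat. The paper itself omits the proof and cites \cite{heistercamp2011}; your argument is the standard one and is consistent with the identities the paper uses later (e.g.\ $\lambda(X_F(x))=2F(x)$ in the proof of Proposition \ref{proposition control of gamma}).
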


Proposition \ref{proposition 1-per somewhere is same as per but on sigma} implies $$\#\mathcal{O}_{\mathcal{A}_F}(a)=\#\mathcal{O}_{X_F |_\Sigma}(\sqrt{a})$$ for all $a>0$. All in all we thus obtain

\begin{equation*}\label{equation number of Reeb and Ham orbits}
\#\mathcal{O}_R(2t) =  \# \mathcal{O}_{X_F |_\Sigma}(t)= \# \mathcal{O}_{\mathcal{A}_F}\left(t^2\right), \quad \forall t > 0.
\end{equation*}
In particular, we can bound the number of geometrically distinct Reeb $T$-periodic orbits
$$\mathcal{N}_\Sigma(T)\, :=\#\mathcal{O}_R(T)$$
from below by bounding $\# \mathcal{O}_{\mathcal{A}_F}\left((T/2)^2\right)$ from below instead.

\section{Spectral Invariants and Minimax Values}\label{section Spectral Invariants and Minimax Values}

\subsection{Spectral Invariants of Quadratic Hamiltonians}
In this section we will introduce spectral invariants and compare them to minimax values on $T^*Q$. Many of the results about spectral invariants here are very analogous to those in \cite{irie2014}. The main difference is the choice of Hamiltonians --- Irie works with Hamiltonians that are linear at infinity, while we opt for quadratic Hamiltonians. The main advantage of quadratic Hamiltonians is that their Floer homology computes the singular homology of the loop space thanks to the work of Abbondandolo, Majer and Schwarz \cite{as2006, am2006, as2015}. The main reason this works so well is that the relevant chain isomorphisms are action preserving.

\begin{convention}
All homologies are implicitly understood to be over the field $\Z_2$ in order to avoid the need of local coefficients whenever the second Stiefel-Whitney class does not vanish over tori \cite{as2014corrigendum, abouzaid2015}.
\end{convention}

Denote by $H \, \colon S^1 \times T^*Q \to \R$ a  Hamiltonian satisfying 
\begin{itemize}
\item[(H0)] every $x \in \mathcal{P}(H)$ is non-degenerate,
\item[(H1)] $dH(t,q,p)[Y]-H_t(q,p) \geq h_0 \Vert p \Vert^2-h_1$, for some constants $h_0>0$ and $h_1 \geq 0$,
\item[(H2)] $\Vert \nabla_q H(t,q,p) \Vert \leq h_2(1+\Vert p \Vert^2)$ and $\Vert \nabla_p H(t,q,p) \Vert \leq h_2(1+\Vert p \Vert)$, for some constant $h_2 \geq 0$.
\end{itemize}
These are the conditions used by AS (Abbondandolo and Schwarz) to define Floer homology on cotangent bundles. Additionally, assume that $H$ admits a Legendre dual Lagrangian $$L\, \colon S^1 \times TQ \longrightarrow \R$$ (e.g. whenever $H$ is strictly convex). Let $\mathcal{L}Q$ be the free loop space of $Q$. Denote by $$\mathcal{E}_L \, \colon \mathcal{L}Q \longrightarrow \R, \quad \mathcal{E}_L(q)=\int_0^1 L_t(q(t),\dot{q}(t)) \, dt$$ the corresponding Lagrangian action functional and define $$\mathrm{CM}_\bullet(\mathcal{E}_L)$$ as the Morse chain complex of $\mathcal{E}_L$. Denote by $$\mathrm{CM}^d_\bullet(\mathcal{E}_L)=\mathrm{CM}_\bullet^{(-\infty,d]}(\mathcal{E}_L)$$ the filtered Morse chain complex of $\mathcal{E}_L$ associated to a regular value $d \in \R$ (similarly $\mathrm{CF}_\bullet^d(H)$ using the action functional $\mathcal{A}_H$),\footnote{The Floer chain complex $\mathrm{CF}_\bullet(H)$ is graded using the Conley--Zehnder index and ``vertically preserving" trivializations as used in \cite{as2006}.} and write 
$$i^d \, \colon \mathrm{HM}^d_\bullet(\mathcal{E}_L) \longrightarrow \mathrm{HM}_\bullet(\mathcal{E}_L)$$
for the induced map.\footnote{In general the induced map in homology is \emph{not} an inclusion.}

\begin{definition}
Let $\alpha \in \mathrm{HM}_\bullet(L) \setminus \{0\}$ (resp. $\beta \in \mathrm{HF}_\bullet(H)\setminus \{0\})$. The \textbf{minimax value} associated to $\alpha$ and $\mathcal{E}_L$ (resp. the \textbf{spectral invariant} associated to $\beta$ and $H$) is defined as
$$c_\alpha(L)=\inf_{\xi=\sum_{i}\xi^i q_i \in \alpha} \mathcal{E}_L(\xi), \text{ where } \mathcal{E}_L(\xi)=\max_{\xi^i \neq 0}\mathcal{E}_L(q_i),$$
resp.
$$c_\beta(H)=\inf_{\zeta=\sum_{i}\zeta^i x_i \in \beta}\mathcal{A}_H(\zeta), \text{ where } \mathcal{A}_H(\zeta)=\max_{\zeta^i \neq 0} \mathcal{A}_H(x_i).$$
\end{definition}
An equivalent definition is given by
$$c_\alpha(L)=\inf\{ c  \; | \, \alpha \in \mathrm{im}(i^c) \},$$
where the infimum runs over $\R$ minus the set of critical values of $\mathcal{E}_L$. The definitions above beg the question as to whether their is a relation between minimax values and spectral invariants. The answer is yes, but this needs a further digression to the work of \cite{as2006}. Therein, a chain map isomorphism

$$\Theta \, \colon \mathrm{CM}_\bullet(\mathcal{E}_L) \overset{\cong}{\longrightarrow} \mathrm{CF}_\bullet(H),$$
is constructed and shown to respect the action filtration, i.e.\ for every $d$ the map $\Theta$ induces a chain map isomorphism, still denoted by $\Theta$:
$$\Theta \, \colon \mathrm{CM}_\bullet^d(\mathcal{E}_L) \overset{\cong}{\longrightarrow} \mathrm{CF}^d_\bullet(H).$$
In \cite{as2015}, AS came up with another construction yielding a chain isomorphism
$$\Psi \, \colon \mathrm{CF}_\bullet(H) \overset{\cong}{\longrightarrow} \mathrm{CM}_\bullet(\mathcal{E}_L),$$
which is action preserving as well and defines a chain homotopy inverse to $\Theta$, i.e.\ $\Psi \circ \Theta$ and $\Theta \circ \Psi$ are chain homotopy equivalent to the corresponding identity on the chain level. With these at hand we have the following easy, but absolutely crucial proposition.

\begin{proposition}\label{proposition floer and morse minimax agree}
Let $H$ and $L$ be dual and assume that they both satisfy the AS conditions (H0), (H1), (H2). Then for all $\alpha \in \mathrm{HM}_\bullet(\mathcal{E}_L) \setminus \{0\}$ and $\beta \in \mathrm{HF}_\bullet(H) \setminus \{0\}$ we have
$$c_\alpha(L)=c_{\Theta(\alpha)}(H), \quad c_\beta(H)=c_{\Psi(\beta)}(L).$$
\end{proposition}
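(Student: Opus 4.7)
The plan is to deduce the proposition from the explicit filtered characterisation
$$c_\alpha(L)=\inf\{c\in\R \mid \alpha\in\mathrm{im}(i^c_L)\}, \qquad c_\beta(H)=\inf\{c\in\R\mid \beta\in\mathrm{im}(i^c_H)\},$$
where $i^c_H\colon \mathrm{HF}_\bullet^c(H)\to\mathrm{HF}_\bullet(H)$ is the Floer analogue of $i^c_L$ (stated in the excerpt for $L$, and entirely parallel for $H$). First I would verify these equivalent descriptions directly from the definitions: a Morse chain $\xi=\sum \xi^i q_i$ satisfies $\mathcal{E}_L(\xi)\leq c$ if and only if every generator appearing with non-zero coefficient has $\mathcal{E}_L(q_i)\leq c$, i.e.\ iff $\xi\in\mathrm{CM}_\bullet^c(\mathcal{E}_L)$. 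Hence a class $\alpha$ admits a cycle representative of action at most $c$ precisely when $\alpha\in\mathrm{im}(i^c_L)$, and the infimum in the definition of $c_\alpha(L)$ matches the one above. The same unravelling works verbatim on the Floer side.

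The crux is then the following short observation. Since $\Theta$ is a chain isomorphism which restricts, for every filtration level $c$, to a chain isomorphism $\Theta\colon \mathrm{CM}_\bullet^c(\mathcal{E}_L)\xrightarrow{\cong}\mathrm{CF}_\bullet^c(H)$, passage to homology produces a commutative square
$$
\begin{CD}
\mathrm{HM}_\bullet^c(\mathcal{E}_L) @>\Theta>\cong> \mathrm{HF}_\bullet^c(H)\\
@Vi^c_LVV @VVi^c_HV\\
\mathrm{HM}_\bullet(\mathcal{E}_L) @>\Theta>\cong> \mathrm{HF}_\bullet(H)
\end{CD}
$$
with horizontal isomorphisms and vertical filtration maps. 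It follows immediately that $\Theta$ carries $\mathrm{im}(i^c_L)$ bijectively onto $\mathrm{im}(i^c_H)$, so $\alpha\in\mathrm{im}(i^c_L)$ iff $\Theta(\alpha)\in\mathrm{im}(i^c_H)$. Taking the infimum over all admissible $c$ gives $c_\alpha(L)=c_{\Theta(\alpha)}(H)$. The identity $c_\beta(H)=c_{\Psi(\beta)}(L)$ follows by the same argument applied to the action-preserving chain isomorphism $\Psi$.

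I do not anticipate a genuine obstacle here; the statement is essentially a bookkeeping consequence of the properties of $\Theta$ and $\Psi$ extracted from \cite{as2006,as2015}. The only subtlety worth a sentence of justification is that the infimum in the equivalent characterisation should a priori run only over regular values of the respective action functional. This is harmless because the action spectra of $\mathcal{E}_L$ and $\mathcal{A}_H$ coincide under the Legendre correspondence, and in any case the set $\{c\mid \alpha\in\mathrm{im}(i^c_L)\}$ is a closed half-line whose left endpoint is realised by a critical value (by spectrality of $c_\alpha(L)$), so the two infima agree unambiguously.
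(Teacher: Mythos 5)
Your proof is correct and follows essentially the same route as the paper: both rest on the fact that $\Theta$ restricts to an isomorphism on each filtered homology group and commutes with the maps $i^c$, so that $\alpha\in\mathrm{im}(i^c_L)$ iff $\Theta(\alpha)\in\mathrm{im}(i^c_H)$ (the paper phrases the reverse inclusion as a contradiction argument via the action-preserving inverse $\Theta^{-1}$, which is the same observation). Your extra care in checking the equivalence of the chain-level and filtered-image definitions of the minimax value is a point the paper asserts without proof, and is a welcome addition.
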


\begin{proof}
Let $d>0$ be such that $\alpha \in \mathrm{im}(i^d)$. By properties of $\Theta$, we have the following commutative diagram with horizontal isomorphisms:
\begin{center}
	\begin{tikzcd}
	& \mathrm{HM}_\bullet(\mathcal{E}_L) \arrow{r}{\Theta} & \mathrm{HF}_\bullet(H) \\
	& \mathrm{HM}^d_\bullet(\mathcal{E}_L) \arrow{r}{\Theta} \arrow{u}{i^d} & \mathrm{HF}^d_\bullet(H) \arrow{u}{i^d}
	\end{tikzcd}
\end{center}
This implies that $\Theta(\alpha)$ lies in the image of the corresponding $i^d$ too. Taking the infimum of $d$'s with $\alpha \in \mathrm{im}(i^d)$ implies

$$c_{\Theta(\alpha)}(H) \leq c_{\alpha}(L).$$

The inequality above, however, is an equality: assume, by contradiction, that there is a value $e<c_\alpha(L)$ for which $\Theta(\alpha)$ lies in the image of $i^e$. Then the fact that $\Theta^{-1}$ is well defined and action preserving\footnote{One could also use $\Psi$ here, which actually is equal to $\Theta^{-1}$ on the homology level.} implies that $\alpha$ is in the image of $i^e$, contradicting the definition of $c_\alpha(L)$.

So we have shown 
$$c_{\Theta(\alpha)}(H)=c_\alpha(L).$$ The other equality can be shown analogously, or directly deduced from the first one by setting $\alpha=\Psi(\beta)$ and observing that $\Theta(\alpha)=\Theta(\Psi(\beta))=\beta$.
\end{proof}

Another result due to Abbondandolo and Majer \cite{am2006} asserts that the Morse homology of $\mathcal{E}_L$, with Lagrangians $L$ as above, is isomorphic to the free loop space homology, i.e.\ there exists an isomorphism
$$\Upsilon \, \colon H_\bullet(\mathcal{L}Q) \overset{\cong}{\longrightarrow} \mathrm{HM}_\bullet(\mathcal{E}_L).$$ This isomorphism also preserves the action filtration, i.e.\ it descends to yet another isomorphism:
$$\Upsilon \, \colon H_\bullet\left( \left\lbrace \mathcal{E}_L \leq d \right\rbrace\right) \overset{\cong}{\longrightarrow} \mathrm{HM}_\bullet^d(L),$$
for all $d \in \R$; cf.\ \cite[Section 2.4]{as2006}. Now recall that minimax values of $\mathcal{E}_L$ can be defined over singular homology classes as well: for $\alpha \in H_\bullet(\mathcal{L}Q) \setminus \{0\}$ we have
$$c_\alpha(L)=\inf_{\eta \in \alpha}\max \mathcal{E}_L \big |_{|\eta|},$$
where $\eta=\sum n_\sigma \sigma$ is a formal finite sum of simplices $\sigma \, \colon \Delta_\bullet \to \mathcal{L}Q$ and $|\eta|$ is the union of the image of those $\sigma$ with non-trivial coefficient $n_\sigma$. We adopt the notation
$$\mathcal{E}_L(\eta)\, :=\max \mathcal{E}_L \big |_{\vert \eta \vert}.$$ The same standard argument as in Proposition \ref{proposition floer and morse minimax agree} implies:
 
\begin{proposition}\label{proposition singular and morse agree}
If $\alpha \in H_\bullet(\mathcal{L}Q)\setminus \{0\}$, then
$$c_\alpha(L)=c_{\Upsilon(\alpha)}(L).$$

\end{proposition}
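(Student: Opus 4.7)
The plan is to mirror the diagram chase used in the proof of Proposition \ref{proposition floer and morse minimax agree}, once the singular minimax value is reformulated in terms of sublevel-set inclusions. The statement will then follow purely from the fact that $\Upsilon$ respects the action filtration.

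First, I would rewrite the singular minimax as
$$c_\alpha(L) \;=\; \inf \{\, d \in \R \, : \, \alpha \in \mathrm{im}(j_*^d) \,\},$$
where $j_*^d \, \colon H_\bullet(\{\mathcal{E}_L \leq d\}) \to H_\bullet(\mathcal{L}Q)$ denotes the map induced by inclusion. This equivalence is standard: a singular chain $\eta = \sum n_\sigma \sigma$ satisfies $\mathcal{E}_L(\eta) \leq d$ if and only if $|\eta| \subseteq \{\mathcal{E}_L \leq d\}$, i.e.\ if and only if $\eta$ is a singular cycle supported in the sublevel set representing $\alpha$ after pushforward.

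Next, I would invoke the cited result of Abbondandolo--Majer \cite{am2006} which provides not just the ambient isomorphism $\Upsilon$ but the filtered version, fitting into a commutative square
\begin{center}
\begin{tikzcd}
H_\bullet\left(\left\lbrace \mathcal{E}_L \leq d \right\rbrace\right) \arrow[r,"\Upsilon","\cong"'] \arrow[d,"j_*^d"'] & \mathrm{HM}_\bullet^d(L) \arrow[d,"i^d"] \\
H_\bullet(\mathcal{L}Q) \arrow[r,"\Upsilon","\cong"'] & \mathrm{HM}_\bullet(\mathcal{E}_L)
\end{tikzcd}
\end{center}
with horizontal isomorphisms. Since the horizontal maps are bijections, we have $\alpha \in \mathrm{im}(j_*^d)$ if and only if $\Upsilon(\alpha) \in \mathrm{im}(i^d)$. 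Taking infimum over admissible $d$ yields
$$c_\alpha(L) \;=\; \inf\{d : \Upsilon(\alpha) \in \mathrm{im}(i^d)\} \;=\; c_{\Upsilon(\alpha)}(L),$$
where the second equality is just the definition of the Morse minimax value recorded earlier in this section.

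The argument is essentially a one-line diagram chase once the reformulation of $c_\alpha(L)$ is in place; the only mild subtlety, and the closest thing to an obstacle, is making sure that the characterisation by sublevel-set images really does recover the author's formal-chain definition (in particular that we may restrict to representatives supported in arbitrarily small neighbourhoods of sublevel sets, which follows from the standard deformation of singular chains into a given open cover). No Floer-theoretic input is needed beyond the filtered Abbondandolo--Majer isomorphism, exactly as in Proposition \ref{proposition floer and morse minimax agree}.
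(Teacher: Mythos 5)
Your proposal is correct and follows essentially the same route as the paper, which simply invokes ``the same standard argument as in Proposition \ref{proposition floer and morse minimax agree}'': namely, the filtered Abbondandolo--Majer isomorphism gives a commutative square with horizontal isomorphisms, and the two minimax values coincide because membership in the image of the filtration maps is preserved in both directions. Your extra care in checking that the sublevel-set characterisation recovers the formal-chain definition of $c_\alpha(L)$ is a reasonable (and correct) elaboration of a point the paper leaves implicit.
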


Once we have fixed $H$ and $L$, we can unambiguously talk about spectral invariants/minimax values associated to a singular/Morse homology class $\alpha$ thanks to Proposition \ref{proposition floer and morse minimax agree} and Proposition \ref{proposition singular and morse agree} and therefore we shall often simply write
$$c_\alpha(H)=c_\alpha(L)$$
without specifying $\alpha$. Of course, the Abbondandolo--Schwarz isomorphisms applied to $\alpha$ depend on the choice of $H$ and $L$.

\subsection{Stability and Spectrality}

We continue by establishing some expected properties of spectral invariants such as  $C^0$-Lipschitz continuity and spectrality. Both are slightly non-standard since we are in a non-compact setting. We start with the former.

\begin{proposition}\label{proposition spectral invariant is C0 lipschitz continuous}
Let $H^0,H^1$ be Hamiltonians satisfying (H0), (H1), (H2), and $$\Vert H^0-H^1 \Vert_{C^0}<\infty.$$ Then for every $\beta \in \mathrm{HF}_\bullet \setminus \{0\}$ we have
$$\vert c_\beta(H^0) - c_\beta(H^1) \vert \leq \Vert H^0 - H^1 \Vert_{C^0}.
$$
\end{proposition}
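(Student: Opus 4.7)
The strategy is standard: construct a continuation chain map between the Floer complexes of $H^0$ and $H^1$ whose shift of the action filtration is controlled by $\Vert H^0 - H^1 \Vert_{C^0}$, then symmetrise. Concretely, pick a smooth non-decreasing cutoff $\chi \, \colon \R \to [0,1]$ with $\chi(s) = 0$ for $s \leq 0$ and $\chi(s) = 1$ for $s \geq 1$, and form the monotone homotopy $H^s_t := (1-\chi(s))H^0_t + \chi(s)H^1_t$. Since $H^0$ and $H^1$ both satisfy (H0), (H1), (H2) and their $C^0$-distance is finite, the family $\{H^s\}$ satisfies the AS growth conditions uniformly in $s \in \R$. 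The AS $L^\infty$-estimates \cite{as2006} therefore apply to solutions of the parametrised Floer equation
$$\partial_s u - J_{s,t}(u)\bigl(\partial_t u - X_{H^s_t}(u)\bigr) = 0,$$
guaranteeing that all trajectories remain in a bounded subset of $T^*Q$, so that a continuation chain map
$$\Phi \, \colon \mathrm{CF}_\bullet(H^0) \longrightarrow \mathrm{CF}_\bullet(H^1)$$
is well defined. By the homotopy-of-homotopies argument $\Phi$ is canonical up to chain homotopy and induces the canonical identification of $\mathrm{HF}_\bullet(H^0)$ and $\mathrm{HF}_\bullet(H^1)$, under which $\Phi_*\beta = \beta$.

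Next, I would invoke the standard energy identity: for any continuation trajectory $u$ connecting $x_- \in \mathcal{P}(H^0)$ to $x_+ \in \mathcal{P}(H^1)$,
$$\mathcal{A}_{H^1}(x_+) - \mathcal{A}_{H^0}(x_-) \, = \, -\int_{\R}\int_0^1 \vert \partial_s u \vert_J^2 \, dt \, ds \, - \, \int_{\R}\int_0^1 \chi'(s)\bigl(H^1_t - H^0_t\bigr)(u(s,t)) \, dt\, ds.$$
Dropping the non-positive energy term and using $\chi' \geq 0$ together with $\int_\R \chi'(s)\, ds = 1$, the second integral is bounded in absolute value by $\Vert H^0 - H^1 \Vert_{C^0}$, hence
$$\mathcal{A}_{H^1}(x_+) \leq \mathcal{A}_{H^0}(x_-) + \Vert H^0 - H^1 \Vert_{C^0}.$$
It follows that $\Phi$ restricts to maps $\mathrm{CF}_\bullet^d(H^0) \to \mathrm{CF}_\bullet^{d + \Vert H^0 - H^1 \Vert_{C^0}}(H^1)$ for every $d \in \R$, and in particular passes to homology.

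Finally, using the alternative characterisation $c_\beta(H) = \inf\{c \mid \beta \in \mathrm{im}(i^c)\}$: if $\beta$ lies in $\mathrm{im}(i^d)$ inside $\mathrm{HF}_\bullet(H^0)$, then $\Phi_*\beta = \beta$ lies in $\mathrm{im}(i^{d+\Vert H^0 - H^1 \Vert_{C^0}})$ inside $\mathrm{HF}_\bullet(H^1)$; taking the infimum over admissible $d$ yields $c_\beta(H^1) \leq c_\beta(H^0) + \Vert H^0 - H^1 \Vert_{C^0}$. Swapping the roles of $H^0$ and $H^1$ gives the reverse inequality, proving the Lipschitz estimate. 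The principal technical obstacle is the justification of the continuation chain map itself in the non-compact cotangent setting: one must ensure that solutions of the $s$-parametrised equation stay inside a bounded region of $T^*Q$. This is precisely where the AS conditions (H1)--(H2)---which are preserved under convex combinations of Hamiltonians whose $C^0$-difference is finite---are essential.
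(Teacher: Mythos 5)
Your proposal is correct and follows essentially the same route as the paper: the monotone linear homotopy between $H^0$ and $H^1$, the energy identity bounding the action shift of continuation trajectories by the curvature term $\int \chi'(s)(H^0-H^1)\,dt\,ds \leq \Vert H^0-H^1\Vert_{C^0}$, the resulting filtered continuation map, and symmetrisation; the paper likewise notes that the finite $C^0$-distance is what makes the AS $L^\infty$-estimates (and hence the continuation map) available in the non-compact setting. The only cosmetic difference is that the paper works with the one-sided quantities $\Delta_{0,1}=\int_0^1\sup(H^0_t-H^1_t)\,dt$ before bounding their maximum by the $C^0$-norm.
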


\begin{proof}
This follows verbatim from \cite[page 431]{schwarz2000} and/or \cite[page 2491]{irie2014}  --- the non-compactness does not affect the proof due to the imposed $C^0$-bound on the difference $H^0-H^1$. For convenience of the reader we go through the argument anyway. First of all observe
$$\Delta_{0,1}\colon=\int_0^1\sup_{(p,q) \in T^*Q}\left( H^0_t-H^1_t \right) \, dt= \Vert H^0 - H^1 \Vert_{C^0} <+\infty,$$ and by symmetry $\Delta_{1,0}<+\infty$. The $C^0$-bound also implies that the two Hamiltonians are close in the AS sense\footnote{See \cite[Lemma 1.21]{as2006} equation (1.39), (1.44) with $\varepsilon=0$.} and therefore there exists a direct continuation isomorphism
 $$\mathrm{HF}_\bullet(H^0) \overset{\cong}{\longrightarrow} \mathrm{HF}_\bullet(H^1).$$ This map is defined by the count of $s$-dependent Floer solutions $u \colon \R \to \mathcal{L}T^*Q$ connecting critical points $x \in \mathcal{P}(H^0)$ and $y \in \mathcal{P}(H^1)$. We claim that the continuation descends to the filtered version
$$\mathrm{HF}_\bullet^{<a}(H^0) \longrightarrow \mathrm{HF}_\bullet^{<a+\Delta_{0,1}}(H^1).$$
Indeed, for the usual connecting homotopy $H^s_t\colon=H^0_t+\beta(s) \cdot \left(H^1_t-H^0_t\right)$ with smooth
$$\beta \colon \R \longrightarrow \R, \quad \beta(s)=\begin{cases}
0, &\text{ if } s \leq 0, \\
1, &\text{ if } s \geq 1,
\end{cases}, \quad  \dot{\beta}(s) \geq 0,
$$
 we get
\begin{align*}
0 \leq E(u) &=\int_{\R} \Vert \partial_s u \Vert_J^2 \, ds \\
&=\int_\R -d\mathcal{A}_{H^s}[\partial_s u] \, ds \\
&=-\int_\R \frac{\partial}{\partial s} \left(\mathcal{A}_{H^s}(u(s)) \right) \, ds+\int_\R \frac{\partial \mathcal{A}_{H^s}}{\partial s}(u(s)) \, ds \\
&=\mathcal{A}_{H^0}(x)-\mathcal{A}_{H^1}(y)-\int_0^1 \dot{\beta}(s) \int_0^1\left(H^1_t(u(s))-H^0_t(u(s)) \right) \, dt \, ds \\
&=\mathcal{A}_{H^0}(x)-\mathcal{A}_{H^1}(y)+\int_0^1 \dot{\beta}(s) \int_0^1\left(H^0_t(u(s))-H^1_t(u(s)) \right) \, dt \, ds \\
&\leq \mathcal{A}_{H^0}(x)-\mathcal{A}_{H^1}(y) + \underbrace{\int_0^1 \sup \left( H^0_t-H_t^1\right) \, dt }_{=\Delta_{0,1}}.
\end{align*}
From the ``filtered" homomorphism above we can readily deduce that
$$c_\beta(H^1) \leq c_\beta(H^0)+\Delta_{0,1}.$$
Using the symmetry of the argument we also get
$$c_\beta(H^0) \leq c_\beta(H^1)+ \Delta_{1,0}$$
and with the final observation that 
$$\max \{\Delta_{0,1},\Delta_{1,0}\} \leq \Vert H^0-H^1 \Vert_{C^0(S^1 \times T^*Q)},$$
which finishes the proof.
\end{proof}

\begin{remark}
The $C^0$-bound in Proposition \ref{proposition spectral invariant is C0 lipschitz continuous} is implicitly used to ensure that the needed $L^\infty$-estimates on the $s$-dependent Floer solutions hold. If $\Vert H^0 -H^1 \Vert$ satisfies a certain quadratic bound \cite[Lemma 1.21]{as2006}, then the $s$-dependent Floer solutions lie in a compact set $B \subseteq T^*Q$. In particular, under the assumption of \cite[Lemma 1.21]{as2006} the conclusion of Proposition \ref{proposition spectral invariant is C0 lipschitz continuous} still holds after replacing $\Vert H^0-H^1 \Vert_{C^0}$ with $\Vert \left(H^0-H^1 \right) \big |_B \Vert_{C^0}$.

\end{remark}

Now we prove spectrality:

\begin{lemma}\label{lemma spectrality non-degenerate case}
Let $H$ be a Hamiltonian satisfying (H0), (H1), and (H2). Then the spectrum
$$\mathcal{S}(H)\, :=\left\lbrace \mathcal{A}_H(x) \, \big \mid \, x \in \mathcal{P}(H) \right\rbrace$$
is closed and discrete.
Moreover, for every $\alpha \in \mathrm{HF}_\bullet(H) \setminus \{0\}$ there exists $x \in \mathcal{P}(H)$ such that
$$c_\alpha(H)=\mathcal{A}_H(x) \in \mathcal{S}(H).$$
\end{lemma}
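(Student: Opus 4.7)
The plan is to prove the two assertions separately: closedness and discreteness of $\mathcal{S}(H)$, and the spectrality of $c_\alpha(H)$.

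First, I would observe that (H1) provides a uniform lower bound on the spectrum. For $x \in \mathcal{P}(H)$, using $\dot q = \partial_p H$ together with the local expressions $\lambda = p\,dq$ and $Y = p\,\partial_p$, one gets
\begin{align*}
\mathcal{A}_H(x) \;=\; \int_0^1\!\big(p\cdot\partial_pH - H\big)(x(t))\,dt \;=\; \int_0^1\!\big(dH[Y]-H\big)(x(t))\,dt \;\geq\; h_0\int_0^1\!\|p(t)\|^2\,dt - h_1,
\end{align*}
so $\mathcal{A}_H(x) \geq -h_1$ for every $x \in \mathcal{P}(H)$. The same chain of inequalities yields an $L^2$-bound on the momentum component of any $x \in \mathcal{P}^C(H)$ in terms of $C$; combined with (H2), standard Grönwall-type bootstrap arguments as in \cite[Sect.~1.5]{as2006} upgrade this to an $L^\infty$-bound, so that $\mathcal{P}^C(H)$ lies in a compact subset of $T^*Q$ and is itself compact in the $C^\infty$-topology. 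Non-degeneracy (H0) then forces each orbit in $\mathcal{P}^C(H)$ to be isolated, hence $\mathcal{P}^C(H)$ is finite. Consequently $\mathcal{S}(H) \cap (-\infty, C]$ is finite for every $C$, and $\mathcal{S}(H)$ is closed and discrete in $\R$.

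Second, I would deduce spectrality using the equivalent description
$$c_\alpha(H) \;=\; \inf\bigl\{d \in \R \,\big|\, \alpha \in \mathrm{im}\bigl(i^d \colon \mathrm{HF}_\bullet^d(H) \to \mathrm{HF}_\bullet(H)\bigr)\bigr\}.$$
Set $c := c_\alpha(H)$. Because any cycle representative of $\alpha$ has some maximal action $d_0 < \infty$, we have $c < +\infty$; because $\mathrm{CF}_\bullet^d(H) = 0$ whenever $d < -h_1$, we have $c \geq -h_1 > -\infty$. Suppose for contradiction that $c \notin \mathcal{S}(H)$. By closedness, choose $\epsilon > 0$ with $[c-\epsilon, c+\epsilon] \cap \mathcal{S}(H) = \emptyset$. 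Over this interval the set of generators $\mathcal{P}^d(H)$ is constant in $d$, so the inclusion $\mathrm{CF}_\bullet^{c-\epsilon}(H) \hookrightarrow \mathrm{CF}_\bullet^{c+\epsilon}(H)$ is the identity on generators, and the induced map $\mathrm{HF}_\bullet^{c-\epsilon}(H) \to \mathrm{HF}_\bullet^{c+\epsilon}(H)$ is an isomorphism. By the definition of $c$ as an infimum I can find $d \in (c, c+\epsilon)$ with $\alpha \in \mathrm{im}(i^d)$; chasing $\alpha$ through the commuting triangle of inclusion-induced maps then yields $\alpha \in \mathrm{im}(i^{c-\epsilon})$, contradicting the minimality of $c$. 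Therefore $c \in \mathcal{S}(H)$, which is exactly the desired spectrality.

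The only non-formal step is the $C^\infty$-compactness of $\mathcal{P}^C(H)$; this is precisely the compactness underlying the AS construction and so can be quoted without additional work. Everything else is the standard filtration-change principle, with the twist that the action is bounded below only because of (H1), and one has to verify that this lower bound together with non-degeneracy really rules out accumulation of the spectrum from below — which the uniform $-h_1$ bound handles at once.
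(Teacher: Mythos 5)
Your proposal is correct and follows essentially the same route as the paper: establish finiteness of $\mathcal{P}^d(H)$ (which the paper simply quotes from Abbondandolo--Schwarz, Lemma 1.10, while you sketch its proof via the (H1) action bound and compactness), deduce that $\mathcal{S}(H)$ is closed and discrete, and then run the standard $\varepsilon$-gap contradiction against the infimum characterization of $c_\alpha(H)$. The only difference is one of detail, not of strategy.
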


\begin{proof}

Let $(x_n)$ be a sequence of Hamiltonian orbits such that $\mathcal{A}_H(x_n)$ converges. In particular, $(x_n) \subseteq \mathcal{P}^d(H)$ for some $d \in \R$.  But \cite[Lemma 1.10]{as2006} tells us that $\mathcal{P}^d(H)$ is finite. This proves that $\mathcal{S}(H)$ is closed. To see that $\mathcal{S}(H)$ is discrete, observe that
$$(-\infty,d] \cap \mathcal{S}(H)=\mathcal{A}_H(\mathcal{P}^d(H)).$$

The proof of spectrality is the same as the one presented in \cite[Lemma 3.1]{irie2014}: assume by contradiction that
$$c_\alpha(H) \notin \mathcal{S}(H).$$
First of all, observe that $-\infty < c_\alpha(H)$. If not there exists a sequence $(x_n) \subset \mathcal{P}(H)$  with $\mathcal{A}_H(x_n) \to -\infty$, contradicting finiteness of $\mathcal{P}^d(H)$. Since $\mathcal{S}(H)$ is closed (as seen above), there exists a $\varepsilon>0$ such that 
$$[c_\alpha(H)-\varepsilon,c_\alpha(H)+\varepsilon) \cap \mathcal{S}(H)=\emptyset.$$ 
In particular
$$\mathrm{HF}_\bullet^{[c_\alpha(H)-\varepsilon,c_\alpha(H)+\varepsilon)}(H)=0,$$ and hence the homomorphism
$$\mathrm{HF}_\bullet^{c_\alpha(H)-\varepsilon}(H) \longrightarrow \mathrm{HF}_\bullet^{c_\alpha(H)+\varepsilon}(H)$$
is an isomorphism. This however readily contradicts the infimum definition of $c_\alpha(H)$. Therefore we have shown that
$$c_\alpha(H) \in \mathcal{S}(H).$$
\end{proof}

The $x \in \mathcal{P}(H)$ produced by Lemma \ref{lemma spectrality non-degenerate case} will be referred to as a \textbf{carrier} of the spectral invariant $c_\alpha(H)$. Note that a carrier does not have to be unique. Moreover, a carrier inherits the homotopy class of the corresponding connected component of $\mathcal{L}Q$ on which $\alpha$ is defined. More precisely, if the Floer class $\alpha$ corresponds to a singular homology class on the component $\mathcal{L}_\eta Q$, where $\eta$ is a conjugacy class in $\pi_1(Q)$, then the free homotopy class of the carrier $x$ is $\eta$. Indeed, the AS isomorphism respects the homotopy class of the generators, i.e.\
$$H_\bullet(\mathcal{L}_\eta Q) \cong \mathrm{HM}_\bullet\left( \mathcal{E}_L \big |_{\mathcal{L}_\eta Q}\right) \cong \mathrm{HF}_\bullet(H;\eta),$$
where the latter denotes the Floer homology of $H$ generated by Hamiltonian orbits with fixed free homotopy class $\eta$.

Until now we have only considered $1$-periodic Hamiltonians $H$ satisfying all three AS conditions. However, we would like to work with spectral invariants for \emph{autonomous} Hamiltonians which at most satisfy (H1) and (H2), e.g. $F$ in Section \ref{section Reeb Orbits on Starshaped Domains}. There are multiple ways to deal with this and we opt for the following: let $H$ be an autonomous Hamiltonian satisfying (H1) and (H2) and consider $$H_n \, \colon  S^1 \times T^*Q \longrightarrow \R, \; H_n(t,q,p)=H(q,p)+ W_n(t,q),$$ a sequence of Hamiltonians satisfying (H0), (H1), and (H2), where $W_n \, \colon S^1 \times Q \to \R$ are potentials tending to $0$ in $C^2$; cf.\ \cite{weber2002}.

\begin{definition}\label{definition spectral invariant for degenerate Hamiltonians}
Let $\alpha \in H_\bullet(\mathcal{L}Q) \setminus \{0\}$ and $H \, \colon T^*Q \to \R$ a Hamiltonians satisfying (H1) and (H2). Define
$$c_\alpha(H)\, :=\lim_{n\to \infty}c_\alpha(H_n),$$
where $H_n=H+W_n$ are non-degenerate Hamiltonians with $1$-periodic potentials $W_n$ that tend to $0$ in $C^2$.
\end{definition}

With Proposition \ref{proposition spectral invariant is C0 lipschitz continuous} at hand we can show that Definition \ref{definition spectral invariant for degenerate Hamiltonians} works and that spectrality also holds in the degenerate case:

\begin{proposition}\label{proposition spectral invariant for degenerate Hamiltonians }
Let $\alpha$ and $H$ as in Definition \ref{definition spectral invariant for degenerate Hamiltonians}. Then $c_\alpha(H)$ is well defined and there exists $x \in \mathcal{P}(H)$ such that $$c_\alpha(H)=\mathcal{A}_H(x) \in \mathcal{S}(H).$$
\end{proposition}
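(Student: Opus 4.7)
The plan is to first establish well-definedness of the limit via the $C^0$-Lipschitz estimate of Proposition \ref{proposition spectral invariant is C0 lipschitz continuous}, and then to produce a carrier by extracting a limit of carriers of the non-degenerate approximations $H_n = H + W_n$.

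For well-definedness, I would apply Proposition \ref{proposition spectral invariant is C0 lipschitz continuous} to pairs $H_n, H_m$ to obtain
$$|c_\alpha(H_n) - c_\alpha(H_m)| \leq \Vert H_n - H_m \Vert_{C^0} = \Vert W_n - W_m \Vert_{C^0}.$$
Since $W_n \to 0$ in $C^2$ (in particular in $C^0$), the sequence $(W_n)$ is $C^0$-Cauchy, and hence so is $(c_\alpha(H_n))$, so the limit exists. Independence from the approximating sequence follows by the same estimate applied to two choices $(W_n), (W_n')$, yielding $|c_\alpha(H+W_n) - c_\alpha(H+W_n')| \leq \Vert W_n - W_n' \Vert_{C^0} \to 0$.

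For spectrality, Lemma \ref{lemma spectrality non-degenerate case} provides carriers $x_n \in \mathcal{P}(H_n)$ with $\mathcal{A}_{H_n}(x_n) = c_\alpha(H_n)$. The strategy is to extract from $(x_n)$ a subsequence converging in $C^\infty$ to a $1$-periodic orbit $x$ of $X_H$, and then to pass to the limit in the identity $\mathcal{A}_{H_n}(x_n) = c_\alpha(H_n)$ to conclude $\mathcal{A}_H(x) = c_\alpha(H)$. Note that $H_n \to H$ in $C^2$ uniformly on compact sets, and the constants $h_0, h_1, h_2$ in (H1), (H2) can be chosen uniformly in $n$ for $n$ large, since the $W_n$ are $C^2$-small perturbations supported on $Q$. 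Since $\mathcal{A}_{H_n}(x_n) \to c_\alpha(H)$ is bounded, the $x_n$ lie in $\mathcal{P}^d(H_n)$ for some uniform $d$. The a priori $L^\infty$-estimate of \cite[Lemma 1.10]{as2006}, which only depends on the constants in (H1), (H2) and on the action bound, then gives a uniform $C^0$-bound for $(x_n)$. Combined with the ODE $\dot{x}_n = X_{H_n}(x_n)$ and the uniform $C^k$-convergence $H_n \to H$ on the relevant compact set, standard bootstrapping yields uniform $C^k$-bounds for all $k$, and Arzel\`a--Ascoli produces a subsequence $x_n \to x$ in $C^\infty$. The limit $x$ then solves $\dot{x} = X_H(x)$ and is $1$-periodic, and the action converges, giving $c_\alpha(H) = \mathcal{A}_H(x) \in \mathcal{S}(H)$.

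The main obstacle is the compactness step: on the non-compact manifold $T^*Q$ one must rule out that the orbits $x_n$ escape to infinity. This is precisely where the uniform validity of the AS conditions (H1), (H2) along the approximating sequence becomes essential, since these conditions are what forces the a priori $L^\infty$-bound for orbits with bounded action. Once the carriers are confined to a fixed compact region of $T^*Q$, all remaining steps (ODE regularity, extraction of a smooth limit, continuity of the action) are standard.
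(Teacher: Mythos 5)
Your proposal is correct and follows essentially the same route as the paper: the $C^0$-Lipschitz estimate of Proposition \ref{proposition spectral invariant is C0 lipschitz continuous} handles convergence and independence of the approximating sequence, and the carrier is obtained by extracting a limit of the carriers $x_n$ using the uniform $L^\infty$-bound from \cite[Lemma 1.10]{as2006} (whose constants depend continuously on the data), Arzel\`a--Ascoli, and bootstrapping. The only cosmetic difference is that you establish existence of the limit via a Cauchy argument before constructing the carrier, whereas the paper produces the limit orbit first and then invokes the Lipschitz estimate for uniqueness; both rest on the same two ingredients.
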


\begin{proof}
The proof strategy is a refinement of standard arguments to the non-compact setting \cite[Proposition 5.1]{fs2007}. Let $H_n$ be a sequence of non-degenerate Hamiltonians converging to $H$ as in Definition \ref{definition spectral invariant for degenerate Hamiltonians}. From Lemma \ref{lemma spectrality non-degenerate case} we get the existence of a sequence $x_n \in \mathcal{P}(H_n)$ such that $$c_\alpha(H_n)=\mathcal{A}_{H_n}(x_n).$$
Additionally, a thorough inspection of \cite[Lemma 1.10]{as2006} reveals that the constants therein depend continuously on $H_n$, thus showing that $(x_n)$ lies in a compact set $B$ in $T^*Q$ --- see also  \cite[Theorem 8.8]{brezis2011}. Hence, by means of Arzel\`a-Ascoli,\footnote{For equicontinuity observe $$d(x_n(t_0),x_n(t_1)) \leq \int_{t_0}^{t_1} \Vert \dot{x}_n(t) \Vert \, dt \leq \Vert X_{H_n} |_B \Vert \cdot \vert t_0-t_1 \vert \leq \underbrace{\Vert H_n \Vert_{C^1(B)}}_{\text{bounded in $n$}} \cdot \vert t_0-t_1 \vert.$$} the $x_n$ converge to some closed loop $x$ in $C^0$. The $C^2$-convergence of the Hamiltonians and bootstrapping imply that $x$ is a smooth $1$-periodic orbit of $H$ with $\mathcal{A}_H(x)=\lim_{n \to \infty}c_\alpha(H_n)$. This limit is unique since for any other sequence $\tilde{H}_n=H+V_n$ we can apply Proposition \ref{proposition spectral invariant is C0 lipschitz continuous}:
$$\vert c_\alpha(\tilde{H}_n)-c_\alpha(H_n) \vert \leq \Vert \tilde{H}_n-H_n \Vert_{C^0}=\Vert V_n - W_n \Vert_{C^0} \to 0, \text{ for } n \to \infty.$$
\end{proof}

\begin{remark}
Since the carrier $x$ of $c_\alpha(H)$ in Proposition \ref{proposition spectral invariant for degenerate Hamiltonians } is constructed as a limit of carriers $x_n$, the homotopy property still holds for degenerate $H$, i.e.\ if $\alpha$ is a singular homology class on $\mathcal{L}_\eta Q$ and $x$ is a carrier of $c_\alpha(H)$, then $[x]=\eta$.
\end{remark}

For degenerate Lagrangians we can exploit the results on the dual Hamiltonian side to recover the fact that the minimax values are attained: let $L \, \colon TQ \to \R$ be a Lagrangian in the sense of AS modulo the non-degeneracy condition. Denote by $H$ its dual Hamiltonian, which then satisfies (H1) and (H2), but is degenerate as well. Subtract a small potential $W_n \, \colon S^1 \times Q \to \R$ from $L$ so that the resulting Lagrangian  $L_n(t,q,v)\, :=L(q,v)-W_n(t,q,v)$ is non-degenerate. These Lagrangians admit dual Hamiltonians $H_n$, which are of the form $H_n=H+W_n$ and satisfy (H0), (H1), and (H2). In particular, using Proposition \ref{proposition floer and morse minimax agree} and Proposition \ref{proposition spectral invariant for degenerate Hamiltonians } we get a unique limit
$$\lim_{n \to \infty} c_\alpha(L_n)=\lim_{n \to \infty} c_\alpha(H_n)=c_\alpha(H).$$

If the gradient flow of $\mathcal{E}_L$ is already sufficiently nice, e.g. whenever $L$ is a purely kinetic Lagrangian, Lusternik--Schnirelmann theory (LS theory) is applicable and produces minimax values $c_\alpha(L)$ that are attained as critical values of $\mathcal{E}_L$ \cite[Chapter 2]{klingenberg2012}. We show that the limit of the sequence $c_\alpha(L_n)$ above produces the same value as LS theory for $c_\alpha(L)$. This is crucial in order to relate the spectral invariants to the minimax values, and we will apply this to the kinetic Lagrangian action functional in the proof of the Main Theorem \ref{theorem main section existence}.

\begin{proposition}\label{proposition LS spectral invariant and limit for degenerate Lagrangians agree}
Let $L$ and $L_n=L-W_n$ be as above and assume that the gradient flow of $\mathcal{E}_L$ is subject to LS theory. Let $\alpha \in H_\bullet(\mathcal{L}Q) \setminus \{0\}$. Then the limit of the sequence $c_\alpha(L_n)$ agrees with the minimax value $c_\alpha(L)$ produced by LS theory, i.e.\
$$c_\alpha(L)=\lim_{n \to \infty}c_\alpha(L_n).$$
\end{proposition}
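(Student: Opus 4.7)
The plan is to reduce both sides of the equality to the \emph{same} singular-homology minimax expression and then conclude with a Lipschitz-type estimate coming from $W_n \to 0$ uniformly.

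For each non-degenerate $L_n$, combining Proposition \ref{proposition floer and morse minimax agree} (which matches the Floer spectral invariant of $H_n$ with the Morse minimax of $\mathcal{E}_{L_n}$) with Proposition \ref{proposition singular and morse agree} (which matches that Morse minimax with the singular one) yields
$$c_\alpha(L_n) \;=\; \inf_{\eta \in \alpha} \mathcal{E}_{L_n}(\eta), \qquad \mathcal{E}_{L_n}(\eta) = \max \mathcal{E}_{L_n}\big|_{|\eta|},$$
where the infimum runs over singular cycles $\eta$ representing $\alpha \in H_\bullet(\mathcal{L}Q)$. On the other side, since $\mathcal{E}_L$ is by assumption amenable to LS theory, its LS minimax value satisfies the standard sublevel-set characterization
$$c_\alpha(L) \;=\; \inf\bigl\{ c \in \R \,\big|\, \alpha \in \mathrm{im}\bigl(H_\bullet(\{\mathcal{E}_L \leq c\}) \to H_\bullet(\mathcal{L}Q)\bigr)\bigr\} \;=\; \inf_{\eta \in \alpha}\mathcal{E}_L(\eta),$$
the second equality being the classical translation between the sublevel and cycle formulations (see \cite[Chapter 2]{klingenberg2012}).

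Next I would observe that since $W_n \in C^\infty(S^1 \times Q)$ and $Q$ is compact, the difference of the Lagrangian action functionals is bounded pointwise on $\mathcal{L}Q$:
$$\mathcal{E}_L(q) - \mathcal{E}_{L_n}(q) \;=\; \int_0^1 W_n(t,q(t))\,dt, \qquad \bigl|\mathcal{E}_L(q) - \mathcal{E}_{L_n}(q)\bigr| \;\leq\; \Vert W_n \Vert_{C^0}.$$
Consequently, for any singular cycle $\eta$ with $|\eta|$ compact,
$$\bigl|\mathcal{E}_L(\eta) - \mathcal{E}_{L_n}(\eta)\bigr| \;\leq\; \Vert W_n \Vert_{C^0},$$
and taking the infimum over $\eta \in \alpha$ gives $|c_\alpha(L)-c_\alpha(L_n)| \leq \Vert W_n\Vert_{C^0}$. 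Since $W_n \to 0$ in $C^2$, in particular in $C^0$, the right-hand side vanishes as $n\to\infty$, proving the claim.

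The only non-routine step is the second equality in the LS formula for $c_\alpha(L)$: one needs the Palais--Smale condition and the deformation lemma for $\mathcal{E}_L$ so that the usual LS minimax produced by the flow coincides with the infimum over singular cycles. This is classical for kinetic (or more generally Tonelli) Lagrangians, and the hypothesis that the gradient flow of $\mathcal{E}_L$ is ``subject to LS theory'' is precisely what ensures this identification; once it is in place, Steps 1 and 3 above are purely formal consequences of results already established in the paper.
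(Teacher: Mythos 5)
Your proposal is correct and follows essentially the same route as the paper: both reduce $c_\alpha(L_n)$ and $c_\alpha(L)$ to the singular minimax formulation (via Propositions \ref{proposition floer and morse minimax agree} and \ref{proposition singular and morse agree}) and then exploit $\mathcal{E}_L - \mathcal{E}_{L_n} = \int_0^1 W_n\,dt$ together with $\Vert W_n \Vert_{C^0} \to 0$. The only difference is cosmetic: the paper organizes the two-sided estimate as a case distinction using carriers/critical points $q_n$ and $q$ at which the minimax values are attained, whereas you compare the two infima directly, which is a slight streamlining that does not even require attainment.
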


\begin{proof}
First of all observe that $c_\alpha(L_n)$ can also be computed by viewing $\alpha$ as a singular homology class; cf.\ Proposition \ref{proposition singular and morse agree}. Hence
$$\big \vert c_\alpha(L_n)-c_\alpha(L) \big \vert \leq \bigg \vert \inf_{\tau \in \alpha} \sup \mathcal{E}_{L_n} \big |_{\vert \tau \vert}-\inf_{\eta \in \alpha} \sup \mathcal{E}_L \big |_{\vert \eta \vert} \bigg \vert
= \vert \mathcal{E}_{L_n}(q_n)-\mathcal{E}_L(q) \vert,
$$
for some $q \in \mathrm{Crit}(L)$ and $q_n \in \mathrm{Crit}(L_n)$ --- such $q_n$'s  (resp. $q$) exist because $$c_\alpha(L_n)=c_\alpha(H_n)=\mathcal{A}_{H}(x_n)=\mathcal{E}_{L_n}(q_n),$$ by Proposition \ref{proposition floer and morse minimax agree} and Lemma \ref{lemma spectrality non-degenerate case} (resp. standard LS theory). 

We proceed with a case distinction first consider the case where there exists a subsequence, still denoted by $\mathcal{E}_{L_n}(q_n)$, such that $$\mathcal{E}_L(q) \leq \mathcal{E}_{L_n}(q_n).$$ We claim that 
$$\big \vert c_\alpha(L_n)-c_\alpha(L) \big \vert \leq \Vert W_n \Vert_{C^0} \to 0.$$
The LHS is equal to $\mathcal{E}_{L_n}(q_n)-\mathcal{E}_L(q)$. Denote by $\eta$ the singular chain with $\mathcal{E}_L(\eta)=\mathcal{E}_L(q)$. Since $\eta$ represents $\alpha$, the definition of $c_\alpha(L_n)$ and the identity $L_n=L-W_n$ imply $$\mathcal{E}_{L_n}(q_n)=c_\alpha(L_n) \leq \mathcal{E}_{L_n}(\eta) \leq \mathcal{E}_L(\eta) + \Vert W_n \Vert_{C^0}= \mathcal{E}_L(q)+\Vert W_n \Vert_{C^0}.\footnote{The supremum of a sum is smaller than the sum of the individual suprema.}$$
For the other case, denote by $\tau_n$ a representative of $\alpha$ with $\mathcal{E}_{L_n}(\tau_n)=\mathcal{E}_{L_n}(q_n)$ and use the same logic to deduce $$\mathcal{E}_L(q)=c_\alpha(L) \leq \mathcal{E}_L(\tau_n) \leq \mathcal{E}_{L_n}(\tau_n)+\Vert W_n \Vert_{C^0}=\mathcal{E}_{L_n}(q_n)+\Vert W_n \Vert_{C^0}.$$ Combining the two cases grants
$$\big \vert c_\alpha(L_n)-c_\alpha(L) \big \vert \leq \Vert W_n \Vert_{C^0} \to 0.$$
This concludes the proof.
\end{proof}

\section{Pinching and Floer Homologies}\label{section Pinching}

\subsection{Preliminaries}
We closely follow \cite{ms2011, heistercamp2011, wullschleger2014} and construct three sequences of non-degenerate Hamiltonians. Let $F \, \colon T^*Q \to \R$ be the Hamiltonian that realizes $\Sigma$ as in Section \ref{section Reeb Orbits on Starshaped Domains}. In particular, $\Sigma$ is \emph{not} assumed to be non-degenerate throughout this section.\footnote{The non-degeneracy of the relevant data to define Floer homology will be achieved by adding small time-dependent potentials as we will see shortly.} Up to rescaling the Riemannian metric we can assume that the kinetic Hamiltonian $$G(q,p)=\frac{1}{2} \Vert p \Vert^2$$ is pointwise smaller than $F$, i.e.\ $$G \leq F.$$ Since $Q$ is compact, there exists a constant $\sigma >1$ such that $$\sigma G \geq F.$$ Define a smooth auxiliary function

$$
f(r)=\begin{cases}
0, \quad &r \in (-\infty,\varepsilon^2], \\
r, \quad & r \geq \varepsilon,
\end{cases}
$$
with $0 \leq f'(r) \leq 2$. The inequality
$$f \circ G \leq f \circ F \leq \sigma G$$
still holds.

We will need the three Hamiltonians to agree at infinity to exploit the full power of the pinching. For this purpose we define, for fixed $d \geq 0$, a function $$\tau_d \, \colon \R \longrightarrow \R, \quad
\tau_d(r)=
\begin{cases}
0, \quad & r \in (-\infty,\sqrt{2d}], \\
1, &r \in [2\sqrt{2d},+\infty),
\end{cases}
$$
with $\tau_d' \geq0$. The square root comes from the fact that we will feed $\Vert p \Vert$ to $\tau_d$ and use the following relations $$\sqrt{2d} = \Vert p \Vert \iff d=\frac{1}{2} \Vert p \Vert^2 \iff d=G(q,p).$$ Let 
$$W_{n} \, \colon S^1 \times Q \longrightarrow \R$$ be a sequence of $1$-periodic potentials, $n \in \N$. Pick $$c,c_{n} \in (0,1/4) \text{ with } c \geq c_{n} \text{ and } \lim_{n \to \infty}c_n=0,$$
and define
$$c'_{n}=\min \left\lbrace c_{n},\frac{c_{n}}{\Vert X_{f \circ F} \Vert_{C^0}}, \frac{c_{n}}{\Vert X_{\sigma G} \Vert_{C^0}},\frac{c_{n}}{\Vert X_{f \circ G} \Vert_{C^0}} \right\rbrace.$$ Assume
$$\Vert W_{n} \Vert_{C^1}  < c'_{n}.$$
In particular, by choice of $c_n$, we obtain
$$\Vert W_{n} \Vert_{C^0} \to 0 \text{ as } n \to \infty.$$
We finally define the 3 Hamiltonians and suppress the dependence on $d>0$ in the notation:

\begin{align*}
G_{n}^+(t,q,p)&=\sigma \cdot G(q,p) + W_{n}(t,q), \\
K_{n}(t,q,p)&=(1-\tau_{c+d})(\Vert p \Vert) \cdot \left[f \circ F(q,p)+W_{n}(t,q) \right]+\tau_{c+d}(\Vert p \Vert) G_{n}^+(t,q,p), \\
&=(1-\tau_{c+d})(\Vert p \Vert) \cdot \left(f \circ F(q,p)\right)+\tau_{c+d}(\Vert p \Vert)\sigma G(t,q,p)+ W_{n}(t,q), \\
G_{n}^-(t,q,p)&=(1-\tau_{c+d})(\Vert p \Vert) \cdot \left[f \circ G(q,p)+W_{n}(t,q) \right]+\tau_{c+d}(\Vert p \Vert) G_{n}^+(t,q,p), \\
&=(1-\tau_{c+d})(\Vert p \Vert) \cdot \left(f \circ G(q,p)\right)+\tau_{c+d}(\Vert p \Vert)\sigma G(t,q,p)+ W_{n}(t,q).
\end{align*}
By definition, $G_{n}^+, \, K_{n}$ and $G_{n}^-$ agree on $\{G \geq 4d\}$. It is well known that for each Hamiltonian $H \in \{f \circ G, \, f \circ F, \,\sigma G \}$, the set of potentials $$W \in \mathcal{V}_H \subseteq C^{\infty}(S^1 \times Q)$$ such that $H +W$ is non-degenerate, is open and dense. The intersection of these three sets of potentials is a residual set, and thus dense by the Baire Category Theorem. This allows us to pick the sequence $W_n$ such that the above estimates are satisfied and all Hamiltonians $H+W_n$ are non-degenerate; cf.\ \cite{weber2002}.
Towards the end of the present section we will pass to the degenerate case, i.e.\ send $W_{n}$ to $0$ for $n \to \infty$. We introduce the needed notation in advance:

\begin{align*}
K&=(1-\tau_{c+d}) \cdot (f \circ F)+ \tau_{c+d} \cdot \sigma G, \\
G^-&=(1-\tau_{c+d}) \cdot (f \circ G)+\tau_{c+d} \cdot \sigma G.
\end{align*}
The following is a collection of standard results taken from \cite{heistercamp2011}, which will be used throughout the whole subsection.

\begin{lemma}\label{lemma summary Heistercamp}
Let $H \, \colon T^*Q \to \R$ be a $C^1$-map which is homogeneous of degree $2$ on every fibre and $c>0$.\footnote{Any $c>0$, not necessarily the one chosen previously.} Let $V \, \colon S^1 \times T^*Q \to \R$ smooth with
$$\Vert V \Vert_{C^1} < \frac{c}{\Vert X_H \Vert_{C^0}}.$$ Let $x \in \mathcal{P}(H+V)$, $a=H(x(t_0))$ for some fixed $t_0 \in S^1$. Then
\begin{enumerate}[(i)]
\item $a-c < H(x(t)) < a+c, \quad \forall t \in S^1.$
\end{enumerate}

Let $W \, \colon S^1 \times Q \to \R$ be smooth with $\Vert W \Vert_{C^1}<c$, $h \, \colon \R \to \R$ smooth and $r>0$. Then for $x \in \mathcal{P}(h \circ H+W)$:

\begin{enumerate}[(i)]
\setcounter{enumi}{1}
\item $\mathcal{A}_{h\circ H +W}(x)=\int_0^1 2h'(H(x(t)) \cdot H(x(t)) - h(H(x(t))-W_t(x(t)) \, dt$,
\item  $\mathcal{S}(rH+W) \subseteq \frac{1}{r} \mathcal{S}(H+W)+[-c,c]$.
\end{enumerate}

\end{lemma}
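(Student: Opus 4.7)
My plan is to verify the three items one at a time: each is a short direct calculation, and the three are essentially independent.

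\textbf{Part (i).} I would differentiate $H$ along the orbit. For $x \in \mathcal{P}(H+V)$, using $\dot x = X_H(x) + X_V(x)$ together with the conservation identity $dH[X_H]\equiv 0$, the derivative collapses to
\[
\tfrac{d}{dt}H(x(t)) \;=\; dH_x[X_V(x)] \;=\; \omega(X_V,X_H)(x) \;=\; -dV[X_H(x)],
\]
which is bounded pointwise in absolute value by $\|V\|_{C^1}\,\|X_H\|_{C^0} < c$ by the hypothesis. Integrating from $t_0$ to $t$ over a subinterval of $S^1$ of length at most $1$ then gives $|H(x(t))-a| < c$ for every $t$, i.e., the strict two-sided bound.

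\textbf{Part (ii).} I would compute $\lambda(\dot x)$ directly. Decomposing $\dot x = h'(H(x))\,X_H(x) + X_W(x)$ and noting that $W$ depends only on $q$, the vector field $X_W$ is purely vertical and hence annihilated by $\lambda = p\,dq$. On the other hand, Euler's identity applied to the fibrewise 2-homogeneous $H$ gives $\lambda(X_H) = p\cdot \partial_p H = 2H$. Combining, $\lambda(\dot x) = 2h'(H(x))\,H(x)$, and substituting into the definition of the action functional gives exactly the claimed formula.

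\textbf{Part (iii).} The plan is to combine (i) and (ii). Specializing (ii) to $h(s)=rs$ shows that every $x \in \mathcal{P}(rH+W)$ satisfies $\mathcal{A}_{rH+W}(x) = \int_0^1 rH(x(t)) - W_t(x(t))\,dt$. Applying (i) with $rH$ in place of $H$ (the 2-homogeneity is preserved and $\|X_{rH}\|_{C^0}$ rescales by $r$, so the built-in $C^1$-bound on $W$ still controls the derivative of $rH$ along the orbit), the integrand $rH(x(\cdot))$ stays within $c$ of the fixed level value $rH(x(t_0))$. A short time-rescaling argument, using that $X_{rH+W}=rX_H+X_W$ reparametrizes to the Hamiltonian vector field of $H+W$ modulo a $(1/r)$-factor in the potential, then identifies $rH(x(t_0))$ with $r$ times an action value of $H+W$; absorbing both the error from (i) and the $W$-integral into the $[-c,c]$ window yields $\mathcal{S}(rH+W)\subseteq r^{-1}\mathcal{S}(H+W)+[-c,c]$.

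\textbf{Main obstacle.} There is no real obstacle: each item is a one-line calculation. The only delicate bookkeeping occurs in (iii), where one has to track how the $C^1$-hypothesis transforms under $H \rightsquigarrow rH$ and how the time-rescaling interacts with the $W$-term. The one genuinely structural input is the Euler-identity step in (ii): it is what makes the formula clean, relies crucially on $H$ being fibrewise 2-homogeneous together with the canonical choice $\lambda = p\,dq$, and is the reason the entire pinching framework of Section \ref{section Pinching} is built around 2-homogeneous Hamiltonians.
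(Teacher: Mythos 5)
The paper offers no proof of this lemma at all --- it is quoted wholesale from \cite{heistercamp2011} --- so there is no in-paper argument to compare yours against; I can only assess your proposal on its own terms. Your parts (i) and (ii) are correct and are surely the intended computations: for (i), $\tfrac{d}{dt}H(x(t))=dH[X_V]=-dV[X_H]$ is bounded in absolute value by $\Vert V\Vert_{C^1}\Vert X_H\Vert_{C^0}<c$, and integrating over an interval of length at most one gives the strict two-sided bound; for (ii), $\lambda(X_{h\circ H})=h'(H)\,\lambda(X_H)=2h'(H)H$ by Euler's identity and $\lambda(X_W)=0$ because $X_W$ is vertical, which yields the stated action formula upon substitution into $\mathcal{A}_{h\circ H+W}$.

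Part (iii), however, contains a genuine gap, in two places. First, you invoke (i) for $rH$ under the hypothesis $\Vert W\Vert_{C^1}<c$, but (i) applied to $rH$ requires $\Vert W\Vert_{C^1}<c/\Vert X_{rH}\Vert_{C^0}=c/\bigl(r\Vert X_H\Vert_{C^0}\bigr)$; your parenthetical claim that the rescaling works in your favour is backwards --- replacing $H$ by $rH$ with $r>1$ makes the required bound on $W$ \emph{more} stringent, and in any case $\Vert W\Vert_{C^1}<c$ does not imply $\Vert W\Vert_{C^1}<c/\Vert X_H\Vert_{C^0}$. Second, and more seriously, your ``time-rescaling argument'' does not produce an element of $\mathcal{S}(H+W)$. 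The only exact correspondence available here is the fibre rescaling $c_r(q,p)=(q,rp)$: if $\dot x=rX_H(x)+X_W(x)$, then $y=c_r\circ x$ satisfies $\dot y=X_H(y)+r\,X_W(y)$ (use $dc_r\circ X_H=\tfrac1r\,X_H\circ c_r$ for fibrewise $2$-homogeneous $H$, and $dc_r\circ X_W=r\,X_W\circ c_r$ for the vertical $X_W$), so $y\in\mathcal{P}(H+rW)$ with $\mathcal{A}_{H+rW}(y)=r\,\mathcal{A}_{rH+W}(x)$. This gives the exact identity $\mathcal{S}(rH+W)=\tfrac1r\,\mathcal{S}(H+rW)$ --- the potential is multiplied by $r$, not divided, and the resulting loop is an orbit of $H+rW$, not of $H+W$. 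A genuine time reparametrization is unavailable because $W$ is $1$-periodic in $t$ and reparametrizing destroys that periodicity. To finish, you would still have to exhibit an actual $1$-periodic orbit of $H+W$ whose action is within the allowed error of $r\,\mathcal{A}_{rH+W}(x)$; the mere $C^1$-closeness of $H+rW$ and $H+W$ does not place their action spectra within $c$ of each other pointwise. That missing existence step is essentially the whole content of (iii), so contrary to your closing remark, this item is not a one-line calculation.
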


The following is an adaptation of \cite[Proposition 2.2.2]{heistercamp2011}.

\begin{proposition}\label{proposition control of gamma}
Let $x \in \mathcal{P}(K_{n})$, $d > 4 \varepsilon + c$. Then
\begin{itemize}
\item If there is a $t_0$ with $F(x(t_0))>d$, then $\mathcal{A}_{K_{n}}(x) > d-2c$,
\item  if $F(x(t_0)) \leq d$ instead, then $\mathcal{A}_{K_{n}}(x) \leq d+c$.
\end{itemize}
The same result remains true after swapping $K_n$ and $F$ with $G^-_n$ and $G$.
\end{proposition}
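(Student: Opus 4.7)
My plan is a case analysis on where the orbit $x$ sits relative to the two ``pure'' regions $\{G \leq c+d\}$ (on which $K_n = f \circ F + W_n$) and $\{G \geq 4(c+d)\}$ (on which $K_n = \sigma G + W_n$); in each of these, $K_n$ is a $W_n$-perturbation of a $2$-homogeneous Hamiltonian, so both parts of Lemma~\ref{lemma summary Heistercamp} apply. Throughout, I fix the auxiliary smoothing $f$ so that $f(r) \leq r$ for all $r$ (making $f$ convex on $[\varepsilon^2, \varepsilon]$ is compatible with $0 \leq f' \leq 2$).

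For the second bullet, the hypothesis $F(x(t)) \leq d$ combined with $G \leq F$ forces $G(x(t)) \leq d < c+d$, so $\tau_{c+d}(\|x(t)\|) \equiv 0$ and $x \in \mathcal{P}(f \circ F + W_n)$. Lemma~\ref{lemma summary Heistercamp}(ii) with $h=f$, $H=F$ then gives
$$\mathcal{A}_{K_n}(x) = \int_0^1 \bigl[\,2 f'(F) F - f(F) - W_n\,\bigr]\,dt.$$
A case split over $\{F \geq \varepsilon\}$, $\{F \leq \varepsilon^2\}$, and $\{\varepsilon^2 < F < \varepsilon\}$ (using $0 \leq f' \leq 2$ and $f \leq \mathrm{id}$) shows the $f$-part of the integrand is bounded by $\max(F, 4\varepsilon) \leq d$ (as $d > 4\varepsilon + c$); together with $\|W_n\|_{C^0} < c$ this yields $\mathcal{A}_{K_n}(x) \leq d + c$.

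For the first bullet, $F(x(t_0)) > d > 4\varepsilon$ and $\sigma G \geq F$ give, regardless of the value of $\tau_{c+d}(\|x(t_0)\|)$,
$$K_n(t_0, x(t_0)) = (1-\tau) f(F) + \tau \sigma G + W_n \geq F(x(t_0)) + W_n(t_0) > d - c_n;$$
moreover, since $K_n$ is autonomous up to the correction $W_n$, we have $\tfrac{d}{dt} K_n(t, x(t)) = \partial_t W_n$, so $K_n(t, x(t)) > d - 2c_n$ for all $t$. The action is then evaluated via the Euler-type identity $\lambda(X_{K_n}) = Y \cdot K_n$ (with $Y \cdot W_n = 0$), which, using the $2$-homogeneity of $F$ and $G$, gives
$$\lambda(\dot x) - K_n = \tau'(\|p\|)\|p\|\bigl(\sigma G - f(F)\bigr) + (1-\tau)\bigl(2f'(F)F - f(F)\bigr) + \tau \sigma G - W_n,$$
whose first term is nonnegative since $\sigma G \geq F \geq f(F)$.

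To close the lower bound I would show $F(x(t)) > d - c$ along the entire orbit. In the pure subcase $\{G(x(t)) \leq c+d\}$, $x$ is a $1$-periodic orbit of $f \circ F + W_n$; a bootstrap on $F(x(t_0)) > 4\varepsilon$ ensures $F(x(t)) \geq \varepsilon$ throughout, so in fact $x \in \mathcal{P}(F + W_n)$ and Lemma~\ref{lemma summary Heistercamp}(i) applied to $H = F$ gives $F(x(t)) > d - c$; since $(1-\tau) F + \tau \sigma G \geq F$, the Euler formula yields $\mathcal{A}_{K_n}(x) > (d-c) - c = d - 2c$. In the pure subcase $\{G(x(t)) \geq 4(c+d)\}$ one has $\tau \equiv 1$, so Lemma~\ref{lemma summary Heistercamp}(i) with $H = \sigma G$ gives $\sigma G(x(t)) > d - c$, again yielding $\mathcal{A}_{K_n}(x) > d - 2c$. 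The main obstacle I anticipate is the mixed subcase, in which $x$ enters the transition annulus $\{c+d < G < 4(c+d)\}$: there $K_n$ is not a small perturbation of any $2$-homogeneous Hamiltonian, so neither of the above applications of Lemma~\ref{lemma summary Heistercamp}(i) is available, and one must instead rely on the almost-conservation $K_n(t, x(t)) > d - 2c_n$ (which, for $c_n$ arranged small enough relative to $c$, together with the nonnegativity of the $\tau'$-term in the Euler formula, still forces $\mathcal{A}_{K_n}(x) > d - 2c$). The statement for $G_n^-$ and $G$ in place of $K_n$ and $F$ follows from the identical argument with $F$ replaced by $G$ throughout.
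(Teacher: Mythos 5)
Your two central mechanisms are exactly the right ones and coincide with the paper's: (a) near-conservation of the Hamiltonian along a $1$-periodic orbit (your version, $\tfrac{d}{dt}K_n(t,x(t))=\partial_t W_n$, is in fact a cleaner route to the bound than the paper's invocation of Lemma \ref{lemma summary Heistercamp}(i) applied to $H=K_n-W_n$), and (b) the fibrewise Euler identity $\lambda(\dot x)=dK_n[Y]$ with the observation that the $\tau'$-term is nonnegative because $\sigma G\geq f\circ F$. Your computation of $\lambda(\dot x)-K_n$ is correct. The second bullet also matches the paper's argument, modulo the propagation of $F(x(t_0))\le d$ to all of $S^1$ (which the paper does via Lemma \ref{lemma summary Heistercamp}(i) and which you implicitly assume by reading the hypothesis as holding for all $t$).

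There is, however, a genuine gap exactly where you anticipate trouble. In the mixed case you claim that conservation plus nonnegativity of the $\tau'$-term ``still forces'' the lower bound, but to pass from your Euler formula to $\lambda(\dot x)-K_n\geq K_n-2W_n$ you must also control the term $(1-\tau)\bigl(2f'(F)F-f(F)\bigr)$ from below by $(1-\tau)f(F)$, i.e.\ you need $f'(F)F\geq f(F)$ along the orbit. The device you propose for this --- making $f$ convex on $[\varepsilon^2,\varepsilon]$ --- is impossible: since $f(\varepsilon^2)=0$, $f(\varepsilon)=\varepsilon$ and $f'(\varepsilon)=1$, the mean value theorem forces $f'>1$ somewhere in $(\varepsilon^2,\varepsilon)$, so $f'$ cannot be nondecreasing there. (The inequality $f'(r)r\geq f(r)$ is achievable for small $\varepsilon$ by making $r\mapsto f(r)/r$ nondecreasing, but that is a different condition and you would need to say so.) The paper closes this gap differently, and in a way that also makes your three-way case split unnecessary: from the conservation bound $K_n(x)-W_n(x)>d-c$ one rules out $F(x(s))\leq\varepsilon$ for any $s$ (else $K_n(x(s))-W_n(x(s))=f\circ F(x(s))\leq\varepsilon$, contradicting $d>4\varepsilon+c$); then $f\circ F=F$ and $f'(F)=1$ along the whole orbit, so $dK_n[Y]\geq 2K_n-2W_n$ holds pointwise everywhere --- including on the transition annulus --- and integrating gives $\mathcal{A}_{K_n}(x)>d-2c$ in one stroke. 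I recommend replacing the case analysis and the convexity claim with this uniform argument.
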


\begin{proof}
In the first situation we get $d <F(x(t_0))=f \circ F(x(t_0))$ because $d > \varepsilon$ and the definition of $f$. In particular, using $\sigma G \geq f \circ F$:
$$a\colon=K_{n}(x(t_0))-W_{n}(x(t_0)) \geq f \circ F(x(t_0)) >d.$$ Thus item $(i)$ in Lemma \ref{lemma summary Heistercamp} applied to $H\colon=K_{n}-W_{n}$, $V=W_{n}$, and the choice of $c$, tells us
$$K_{n}(x)-W_{n}(x) > a-c > d-c.$$ At the same time we have $ F(x) >  \varepsilon$ --- indeed, if we assume $F(x(s)) \leq \varepsilon$ for some $s \in S^1$, we reach a contradiction: the inequality $G(x(s)) \leq F(x(s)) \leq \varepsilon <d$ implies $$K_n(x(s))=f \circ F(x(s))+W_n(x(s))$$ and therefore
$$ \varepsilon \geq F(x(s)) \geq f \circ F(x(s))=K_{n}(x(s))-W_{n}(x(s)) >d-c,$$
which contradicts the choice of $\varepsilon$.

Since $F(x) \geq \varepsilon$, we have $f \circ F(x)=F(x)$, in particular 

\begin{align*}
d(f \circ F)(x)[Y]&=f'(F(x)) \cdot dF(x)[Y]=dF(x)[Y].
\end{align*} We will need this in a second. Observe

$$\mathcal{A}_{K_{n}}(x)=\int x^*\lambda-\int_0^1 K_{n}(x) \, dt=\int_0^1 dK_{n}(x)[Y]-K_{n}(x) \, dt.$$
But
\begin{align*}
dK_{n}(x)[Y]&=\underbrace{-\Vert p \Vert \cdot (\tau_{d+c})'(\Vert p \Vert)(f \circ F)(x)+\Vert p \Vert \cdot (\tau_{d+c})'(\Vert p \Vert) \sigma G(x)}_{ \geq 0} \\
& \quad + (1-\tau_{d+c})(\Vert p \Vert) \cdot d(f \circ F)(x)[Y]+ \tau_{d+c}(\Vert p \Vert) \cdot \sigma dG(x)[Y]+dW_{n}(x)[Y] \\
& \geq 2 (1-\tau_{d+c})(\Vert p \Vert) F(x) + \tau_{d+c}(\Vert p \Vert) 2 \sigma G(x) \\
&= 2 \cdot K_{n}(x)-2 W_{n}(x)
\end{align*}
Here we used the previously derived identity $d(f \circ F)(x)[Y]=2 F(x)$, the analogous Euler identity $dG(x)[Y]=2G(x)$, and $dW_{n}(x)[Y]=0$ (note that $W_{n}$ does not depend on the fibre variable $p$). Thus we can finally bound

\begin{align*}
\mathcal{A}_{K_{n}}(x)&=\int dK_{n}(x)[Y]-K_{n}(x) \, dt \\
& \geq \int \left( K_{n}(x)-W_{n}(x) \right) - W_{n}(x) \, dt \\
& \geq d-c -c \\
&=d-2c.
\end{align*}
This concludes the proof of the first bullet point.

For the other item we observe that $F(x(t_0)) \leq d$ implies $G(x(t_0)) \leq d$, thus $\tau_{c+d}$ does vanish at $\Vert p(t_0) \Vert$. In particular
$$a=K_{n}(x(t_0))-W_{n}(x(t_0))=f \circ F(x(t_0)) \leq F(x(t_0)) \leq d.$$
Applying $(i)$ from Lemma \ref{lemma summary Heistercamp} again tells us that 
$$ K_{n}(x)-W_{n}(x) < a+c \leq d+c,$$
but $$f \circ F \leq K_{n} - W_{n},$$
therefore implying with the above that $G(x) \leq d+c$ and thus that $\tau_{c+d}$ vanishes on the whole interval $\Vert p(t) \Vert$. This proves
$$K_{n}=f \circ F(x)+W_{n}(x).$$ Invoking Lemma \ref{lemma summary Heistercamp} item $(ii)$ gives 
$$\mathcal{A}_{K_{n}}(x)=\mathcal{A}_{f \circ F+W_{n}}(x)=\int 2 f'(F(x))-f(F(x)) \, dt - \int W_{n}(x) \, dt.$$
We make a case distinction to bound the left integral: if $F(x(s)) < \varepsilon^2$, then $$2 f'(F(x(s)))-f(F(x(s)))=0.$$ For $F(x(s)) > \varepsilon$ we have $$2 f'(F(x(s)))-f(F(x(s)))=2F(x(s))-F(x(s)) \leq d+c,$$ and in the case $F(x(s)) \in [\varepsilon^2,\varepsilon]$ we get $$2 f'(F(x(s)))-f(F(x(s))) \leq 4F(x(s)) < 4\varepsilon \leq d$$ since $f' \leq 2$ and by assumption on $d$. All in all this proves
$$\mathcal{A}_{K_{n}}(x) \leq d+c.$$

The case for $G^-_{n}$ is verbatim the same after swapping $F$ with $G$.
\end{proof}

\begin{notation}
We write $\underline{\mathcal{P}}(H)$ to denote the non-constant orbits of the Hamiltonian $H$. The notation for action windows $\underline{\mathcal{P}}^b$ is adopted.
\end{notation}

\begin{proposition}\label{proposition precomposing with f does not change orbits}
Let $d>\max\{4 \varepsilon+c,2c\}$ and $f, \, K, \, G^-$ be as above. Then for any $$b \in (0,d-2c)$$ we have
$$\underline{\mathcal{P}}^b(K) \subseteq \underline{\mathcal{P}}^b(f \circ F)=\underline{\mathcal{P}}^b(F)$$
and
$$\underline{\mathcal{P}}^b(G^-) \subseteq \underline{\mathcal{P}}^b(f \circ G)=\underline{\mathcal{P}}^b(G).$$

\end{proposition}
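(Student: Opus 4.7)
The plan is to split the proposition into two independent assertions, the inclusion $\underline{\mathcal{P}}^b(K) \subseteq \underline{\mathcal{P}}^b(f \circ F)$ and the equality $\underline{\mathcal{P}}^b(f \circ F) = \underline{\mathcal{P}}^b(F)$; the analogous statements for $G^-, f \circ G, G$ will follow from the identical argument with $F$ replaced by $G$ and $\Sigma$ by the unit cosphere bundle $S^*Q$.

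For the inclusion I would invoke Proposition \ref{proposition control of gamma} directly for the autonomous Hamiltonian $K$: its proof applies mutatis mutandis by setting $W_n \equiv 0$ throughout (the inequalities coming from Lemma \ref{lemma summary Heistercamp} (i) become equalities by exact conservation of $K$ along its own flow). The contrapositive of its first bullet point then yields that any $x \in \mathcal{P}(K)$ with $\mathcal{A}_K(x) \leq b < d - 2c$ satisfies $F(x(t)) \leq d$ for every $t \in S^1$. Since $G \leq F$, this forces $\Vert p(t) \Vert^2 = 2 G(x(t)) \leq 2d < 2(c+d)$, so $\tau_{c+d}(\Vert p(t) \Vert) = 0$ pointwise and, by strictness of the inequality, on an open neighborhood of the image of $x$ as well. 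On this neighborhood $K$ coincides with $f \circ F$; hence $X_K = X_{f \circ F}$ along $x$ and $\mathcal{A}_K(x) = \mathcal{A}_{f \circ F}(x)$, giving $x \in \underline{\mathcal{P}}^b(f \circ F)$.

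For the equality I would exploit the freedom to shrink the parameter $\varepsilon > 0$ in the construction of $f$. Let $T_{\min} > 0$ be a lower bound on Reeb periods of the compact contact manifold $(\Sigma, \alpha_\Sigma)$; such a bound exists by a standard flow-box argument combined with the fact that $R$ is smooth and nowhere vanishing on compact $\Sigma$. Fix $\varepsilon$ so small that $\sqrt{\varepsilon} < T_{\min}$. For any $x \in \underline{\mathcal{P}}^b(f \circ F)$, the autonomy of $f \circ F$ together with the identity $dF(X_{f \circ F}) = f'(F) \cdot dF(X_F) = 0$ forces $F(x(t)) \equiv c_0$ to be constant. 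The case $c_0 \leq \varepsilon^2$ is ruled out because it implies $f'(c_0) = 0$ and hence $X_{f \circ F} \equiv 0$ along $x$, contradicting non-constancy. The case $c_0 \in (\varepsilon^2, \varepsilon)$ is ruled out as follows: the reparametrized loop $\tilde{x}(t) := x(t / f'(c_0))$ is an $f'(c_0)$-periodic integral curve of $X_F$ at level $c_0$, which by the $2$-homogeneity scaling underlying Proposition \ref{proposition 1-per somewhere is same as per but on sigma} corresponds to a Reeb orbit on $\Sigma$ of period $f'(c_0) \sqrt{c_0} / 2 < \sqrt{\varepsilon} < T_{\min}$, contradicting the definition of $T_{\min}$. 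Therefore $c_0 \geq \varepsilon$, in which case $f \circ F = F$ and $X_{f \circ F} = X_F$ on an open neighborhood of the image of $x$, so $x \in \underline{\mathcal{P}}^b(F)$. The reverse inclusion is analogous: any non-constant $x \in \underline{\mathcal{P}}^b(F)$ lies at level $a > 0$ with $\sqrt{a}/2 \geq T_{\min}$, forcing $a \geq 4 T_{\min}^2 > \varepsilon$, after which $f \circ F$ and $F$ again coincide near the image of $x$.

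The main obstacle is the exclusion of orbits of $f \circ F$ in the smoothing band $F \in (\varepsilon^2, \varepsilon)$; this is where the positive lower bound $T_{\min}$ on Reeb periods and the freedom to shrink $\varepsilon$ are essential. Everything else reduces to a direct combination of Proposition \ref{proposition control of gamma} with the autonomy and $2$-homogeneity of $F$ (respectively $G$).
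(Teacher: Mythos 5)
Your argument is correct, and for the inclusion $\underline{\mathcal{P}}^b(K)\subseteq\underline{\mathcal{P}}^b(f\circ F)$ it coincides with the paper's: both take the contrapositive of the first bullet of Proposition \ref{proposition control of gamma} with $W_n\equiv 0$ to conclude $F(x(t))\leq d$, hence $G(x(t))\leq d$, hence $\tau_{c+d}(\Vert p(t)\Vert)=0$ on a neighbourhood of the image, so that $K$ and $f\circ F$ have the same Hamiltonian vector field and action there. Where you genuinely diverge is the equality $\underline{\mathcal{P}}^b(f\circ F)=\underline{\mathcal{P}}^b(F)$: the paper simply declares that $\varepsilon$ may be chosen small enough for this to hold and outsources the justification to Heistercamp (page 52), whereas you supply the argument yourself --- conservation of $F$ along orbits of $f\circ F$, exclusion of the level set $\{F\leq\varepsilon^2\}$ because $f'$ vanishes there, and exclusion of the smoothing band $F\in(\varepsilon^2,\varepsilon)$ because such an orbit would rescale to a closed Reeb orbit of period $O(\sqrt{\varepsilon})$, below the positive minimal period $T_{\min}$ of the compact contact manifold $(\Sigma,\alpha_\Sigma)$. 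This is the standard mechanism behind the cited fact, so your write-up is a useful expansion rather than a different method. Two small points: your period bookkeeping is off by a constant --- a $1$-periodic orbit of $X_F$ at level $c_0$ rescales to a $\sqrt{c_0}$-periodic orbit of $X_F|_\Sigma$, which by $X_F|_\Sigma=2R$ is a Reeb orbit of period $2\sqrt{c_0}$, so the band orbits have Reeb period at most $2f'(c_0)\sqrt{c_0}\leq 4\sqrt{\varepsilon}$ rather than $f'(c_0)\sqrt{c_0}/2$; this is harmless, as one just requires $4\sqrt{\varepsilon}<T_{\min}$. Also note that the paper additionally records the action identity $\mathcal{A}_{f\circ F}(x)=F(x)=\mathcal{A}_F(x)$ on $\{F\geq\varepsilon\}$, which is needed so that the filtration level $b$ is preserved under the identification; in your setup this is immediate since $f\circ F$ and $F$ agree near the orbit, but it is worth stating.
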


\begin{proof}
The function $f$ does implicitly depend on $\varepsilon$. We may assume that $\varepsilon>0$ has been chosen so small that the non-constant $1$-periodic orbits of $f \circ G$ (resp. $f \circ F$) agree with those of $G$ (resp. $F$) and lie in the set $\{f \circ G \geq \varepsilon\}$ (resp. $\{f \circ F \geq \varepsilon \}$); cf.\ \cite[page 52]{heistercamp2011}. In particular, for any $x \in \underline{\mathcal{P}}(f \circ F)$ we have $$\mathcal{A}_{f \circ F}(x)=f \circ F(x)=F(x)=\mathcal{A}_F(x),$$ with the analogous statement for $G$. Therefore we have just shown that for any $b>0$ we have $$\underline{\mathcal{P}}^b(f \circ G)= \underline{\mathcal{P}}^b(G) \text{ and } \underline{\mathcal{P}}^b(f \circ F)=\underline{\mathcal{P}}^b(F).$$

Now $x \in \mathcal{P}^b(G^-)$ with $b$ chosen as in the statement. In particular, $\mathcal{A}_{G^-}(x) \leq b<d-2c$ and thus the contrapositive of Proposition \ref{proposition control of gamma} applied to $G^-$ (i.e.\ $G_n^-$ with $W_n=0$) says $G(x) \leq d$. Therefore $G^-(x)=f \circ G$, hence
$$\mathcal{P}^b(G^-) \subseteq \mathcal{P}^b(f \circ G).$$ The analogous inclusion holds for $K$ and $F$. Restricting to the set of non-constant orbits and combining this with the above then concludes the proof.
\end{proof}

We are finally able to state and prove our own version of the``Non-Crossing Lemma"; cf.\ \cite[Lemma 3.3]{ms2011}  and \cite[Lemma 2.2.3]{heistercamp2011}. This will enable us to get a filtered continuation isomorphism between the Floer homologies of $G^-_{n}$ and $G^{+}_{n}$. To this end, define the usual homotopy
$$G^s_{n}\, :=(1-\beta(s)) \, G^-_{n}+\beta(s) \, G^+_{n}$$ with $\beta$ chosen as in the proof of Proposition \ref{proposition spectral invariant is C0 lipschitz continuous}. We also define, for any $a \in \R$:
$$a(s)\, :=\frac{a}{1 + \beta(s)\left(\sigma-1\right)}.$$
The function $a(s)$ is strictly decreasing, starts at $a(0)=a$ and ends at $a(1)=\frac{a}{\sigma}$. In particular
$$\frac{a(0)}{a(1)}=\sigma.$$

\begin{lemma}[Non-Crossing Lemma]\label{lemma non-crossing}
Fix $d >\max\{4\varepsilon+c,2c\}$ as in Proposition \ref{proposition precomposing with f does not change orbits}. Up to shrinking $\varepsilon=\varepsilon(c,\sigma,G)>0$, the following holds true:

For any $a \in \R$ with $$4(\sigma+4)c<a<d-2c$$ the relation $$[a-c_{n},a+c_{n}] \cap \mathcal{S}(G+W_{n}) = \emptyset$$ implies
$$a(s) \notin \mathcal{S}(G_{n}^s), \quad \forall s \in [0,1].$$
\end{lemma}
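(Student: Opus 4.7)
The plan is to argue by contradiction, confining any putative orbit to the region where the homotopy $G_n^{s_0}$ collapses to a clean rescaling $r_{s_0} G + W_n$ of $G$, and then pulling the spectral value back to $G + W_n$ to contradict the empty-intersection hypothesis. Assume there exists $s_0 \in [0,1]$ with $a(s_0) \in \mathcal{S}(G_n^{s_0})$, carried by $x \in \mathcal{P}(G_n^{s_0})$.

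First I would adapt Proposition \ref{proposition control of gamma} to the homotopy family $G_n^s$. A direct computation of $dG_n^s[Y]$ shows that all new terms coming from the interpolation factor $\beta(s)$ and the cutoff $\tau_{c+d}$ contribute with the correct sign, because $\sigma G \geq f \circ G$ and $\tau_{c+d}' \geq 0$; the same chain of inequalities used in Proposition \ref{proposition control of gamma} then gives $\mathcal{A}_{G_n^s}(y) > d - 2c$ whenever $G(y(t_0)) > d$. The hypothesis $a(s_0) \leq a < d - 2c$ therefore forces $G(x(t)) \leq d$ throughout, so $\tau_{c+d}(\Vert p(t) \Vert) \equiv 0$ along the orbit and $G_n^{s_0}$ reduces along $x$ to $(1-\beta(s_0))(f \circ G) + \beta(s_0)\sigma G + W_n$.

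Second, and this is the main step, I would show that $G(x(t)) > \varepsilon$ for every $t$, so that also $f \circ G = G$ along $x$. This is exactly where the freedom to shrink $\varepsilon = \varepsilon(c,\sigma,G)$ and the peculiar lower bound $a > 4(\sigma+4)c$ come into play. Autonomy of $G_n^{s_0}$ up to $W_n$ yields $|G_n^{s_0}(x(t)) - G_n^{s_0}(x(t'))| \leq \Vert W_n \Vert_{C^1} < c_n$ for all $t, t'$. If $G(x(t_1)) \leq \varepsilon$ at some time, combining this near-conservation with the action formula in Lemma \ref{lemma summary Heistercamp}(ii) and a case distinction on the location of $G(x(t))$ with respect to $[0,\varepsilon^2]$, $[\varepsilon^2,\varepsilon]$ and $[\varepsilon,\infty)$ bounds $a(s_0)$ above by an expression of the form $C(\sigma) \varepsilon + O(c)$. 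For $\varepsilon$ small enough relative to $c/C(\sigma)$ this contradicts $a(s_0) \geq a/\sigma > 4(1+4/\sigma) c$, which is exactly what the condition $a > 4(\sigma+4)c$ purchases.

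Combining the two reductions, along $x$ the Hamiltonians $G_n^{s_0}$ and $r_{s_0} G + W_n$, with $r_{s_0} := 1 + \beta(s_0)(\sigma-1) \in [1,\sigma]$, coincide on an open neighbourhood of the orbit, and so do their vector fields; thus $x$ is a $1$-periodic orbit of $r_{s_0} G + W_n$ of action $a/r_{s_0} = a(s_0)$. A final application of Lemma \ref{lemma summary Heistercamp}(iii) to the $2$-homogeneous $G$ and the perturbation $W_n$ (whose $C^1$-norm was calibrated via $c'_n$ precisely to make this step sharp) then produces $\tilde b \in \mathcal{S}(G + W_n)$ within an $O(c_n)$-neighbourhood of $a$, contradicting $[a - c_n, a + c_n] \cap \mathcal{S}(G + W_n) = \emptyset$. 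The hard part will be step 2: converting the pointwise hypothesis $G(x(t_1)) \leq \varepsilon$ into a sharp action bound. Because the effective Hamiltonian is only piecewise $2$-homogeneous in the fibre, Lemma \ref{lemma summary Heistercamp}(i) cannot be applied naively, and one has to combine near-conservation of the autonomous part with a careful analysis of the three regions cut out by $f$, and then trace through how $\varepsilon$ must depend on $c$, $\sigma$ and $\Vert X_G \Vert_{C^0}$ to make the constant $4(\sigma+4)$ actually do its job. Steps 1 and 3 are faithful adaptations of arguments already carried out in Proposition \ref{proposition control of gamma} and Lemma \ref{lemma summary Heistercamp}(iii) respectively.
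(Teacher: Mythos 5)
Your proposal follows essentially the same route as the paper: the paper organizes the argument as a trichotomy on where the orbit lives (enters $\{G>d\}$, enters $\{G\le\varepsilon\}$, or stays in $\{\varepsilon<G\le d\}$), handling the first region by the computation of Proposition \ref{proposition control of gamma}, the second by near-conservation of the autonomous part together with Lemma \ref{lemma summary Heistercamp}(i)--(ii) and the shrinking of $\varepsilon$, and the third by reducing $G_n^s$ to $[(\sigma-1)\beta(s)+1]G+W_n$ and invoking Lemma \ref{lemma summary Heistercamp}(iii) --- exactly your three steps, merely phrased as a direct case analysis rather than a contradiction. Your identification of step 2 as the delicate one, and of the role of the bound $a>4(\sigma+4)c$ in fixing $\varepsilon(c,\sigma,G)$, matches the paper's treatment.
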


\begin{proof}
We will make a case distinction and show that for every $x \in \mathcal{P}(G^s_{n})$ we have:
\begin{enumerate}
\item if $x$ enters  $\{ G>d\}$, then $\mathcal{A}_{G_{n}^s}(x) > a(s)$, 
\item if $x$ enters $\{ G \leq \varepsilon\}$, then $\mathcal{A}_{G^s_{n}}(x)<a(s)$ (if $\varepsilon$ is chosen accordingly),
\item if $x$ stays in $\{ \varepsilon < G \leq d\}$, then $[a-c_{n},a+c_{n}]$ intersects $\mathcal{S}(G+W_{n})$, contradicting our assumption.
\end{enumerate}
Showing $1, \, 2$ and $3$ above immediately implies our desired result. 

Ad $1$: the same computation as in Proposition \ref{proposition control of gamma} implies
$$\mathcal{A}_{G_{n}^s}(x) > d-2c>a \geq a(s).$$ This proves $1.$

Ad $2$: By part 1. we may assume that $x$ stays in $\{G \leq d\}$, in particular we can ignore $\tau_{c+d}$ and deduce
$$G_{n}^s(x)=(1-\beta(s)) (f \circ G)(x)+\beta(s) \cdot \sigma G(x)+W_{n}(x).$$
This readily implies
$$f \circ G(x)  \leq G^s_{n}(x)-W_{n}(x).$$ For simplicity, define $$\bar{h}(r):=\beta(s)\sigma \cdot r +(1-\beta(s)) \cdot f(r)$$ 
and rewrite the above as
$$\bar{h}\circ G(x)+W_{n}(x)=G^s_{n}(x).$$ 
By the assumption in 2 there is some $t_0$ with $ G(x(t_0)) \leq \varepsilon$. We apply Lemma \ref{lemma summary Heistercamp} $(i)$ to

$$G^s_{n}(x(t_0))-W_{n}(x(t_0)) =\bar{h}(G(x(t_0))$$
and use the choice of $c_{n}' > \Vert W_{n} \Vert_{C^1}$ to obtain

\begin{align*}
\bar{h}(G(x))&\leq \bar{h}(G(x(t_0)) + \Vert W_{n} \Vert_{C^1} \cdot \left((1-\beta(s)) \Vert X_{g_i \circ G} \Vert_{C^0} +\beta(s) \Vert X_{\sigma G} \Vert_{C^0} \right) \\
 &\leq \sigma \varepsilon + \varepsilon +2c_{n} \\
 &=(\sigma+1) \varepsilon + 2c_{n,}.
\end{align*}

We claim that
$$G(x) \leq (\sigma+1 ) \varepsilon + 2c_{n}$$
holds. Indeed, if not, then there is some $r_0 \in S^1$ with $ G(x(r_0))>(\sigma+1 ) \varepsilon + 2c_{n}$. In particular, $ G(x(r_0))> \varepsilon$ which implies $f \circ G(x(r_0))= G(x(r_0))$. However, as seen above: $$f \circ G(x) \leq G_{n}^s(x)-W_{n}(x)=\bar{h}(G(x)) \leq (\sigma+1)\varepsilon+2c_{n},$$ which altogether leads to the following contradiction:
$$(\sigma+1) \varepsilon + 2c_{n}<  G(x(r_0))=f \circ G(x(r_0)) \leq G_{n}^s(x(r_0))-W_{n}(x(r_0)) \leq (\sigma+1) \varepsilon + 2c_{n}.$$
The claim follows.

Next we bound the action $\mathcal{A}_{G_{n}^s}$:
Observe $\bar{h}'(r)=\beta(s)\sigma +(1-\beta(s)) \cdot f'(r)$. Applying Lemma \ref{lemma summary Heistercamp} item $(ii)$ to $\bar{h} \circ G+W_{n}$ gives

\begin{align*}
\mathcal{A}_{G_{n}^s}(x)&=\int 2 \left[ \sigma\beta(s)+(1-\beta(s)) \cdot f'(G(x))\right] \cdot G(x) - \underbrace{(\bar{h}(G(x)+W_{n}(x))}_{=G_{n}^s(x) \geq -c_{n}} \, dt \\
&\leq \int (2\sigma +4)G(x)\, dt +c_{n}.
\end{align*}
Together with the claim from above we obtain

$$
\mathcal{A}_{G^s_{n}}(x) \leq 2(\sigma+1)^2\varepsilon+4(\sigma +2)c_{n}+c_{n} \leq 2(\sigma + 1)^2 \varepsilon + 4\cdot(\sigma +3)c.
$$
We choose $\varepsilon$ now: since $a / \sigma \leq a(s)$, it suffices to have $$0< \varepsilon < \frac{\frac{a}{\sigma}-4(\sigma+3)c}{2(\sigma+1)^2}$$ to obtain $\mathcal{A}_{G_{n}^s}(x)<a(s)$. By choice of $a$ we have $a > 4(\sigma+4)c$, so we might as well choose 
$$0< \varepsilon(c,\sigma,G) < \frac{2c}{(\sigma+1)^2}.$$ This takes care of case 2.

Ad 3: By contradiction assume that that $\mathcal{A}_{G^s_{n}}(x)=a(s)$ for some $s \in [0,1]$. Since $x$ lies in $\{ \varepsilon< G \leq d\}$ we have 
\begin{align*}
G_{n}^s(x)&=(1-\beta(s)) \cdot G(x)+\beta(s) \sigma \cdot G(x)+W_{n}(x) \\
&=\left[(\sigma - 1) \beta(s)+1 \right]\cdot G(x)+W_{n}(x).
\end{align*}
In particular $a(s) \in \mathcal{S}\left(\left[(\sigma - 1) \beta(s)+1 \right]\cdot G+W_{n}\right)$, thus by Lemma \ref{lemma summary Heistercamp} $(iii)$:
$$[a-c_{n},a+c_{n}] \cap \mathcal{S}(G+W_{n}) \neq \emptyset,$$
contradicting our assumption. This finishes the proof.
\end{proof}

\begin{notation}
For better readability we set 
$$e=4(\sigma+4)c.$$
\end{notation}

The condition in the Non-Crossing Lemma is met for almost all $n \in \N$ as the next result shows: 

\begin{proposition}\label{proposition non-crossing assumptions hold asymptotically}
Let $a \notin \mathcal{S}(G)$. Then there exists a positive integer $N=N(a)$ such that:
$$\forall n \geq N(a) \, \colon \quad [a-c_{n},a+c_{n}] \cap \mathcal{S}(G+W_{n}) = \emptyset.$$
\end{proposition}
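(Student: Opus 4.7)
The plan is to argue by contradiction and extract a limiting $1$-periodic orbit of $G$ of action $a$. Suppose the proposition fails. Then after passing to a subsequence (still denoted by $n$), there exist orbits $x_n=(q_n,p_n) \in \mathcal{P}(G+W_n)$ with
$$\mathcal{A}_{G+W_n}(x_n)\in [a-c_n,a+c_n].$$
Since $c_n\to 0$, the actions of the $x_n$ converge to $a$. If we can produce a subsequential $C^0$-limit $x$ that is a $1$-periodic orbit of $G$ with $\mathcal{A}_G(x)=a$, this contradicts the assumption $a\notin \mathcal{S}(G)$ and concludes the proof.

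The first step is a uniform $L^\infty$-bound on $p_n$. Using $G(q,p)=\tfrac{1}{2}\Vert p\Vert^2$, a direct computation gives $\mathcal{A}_{G+W_n}(x_n)=\int_0^1 \tfrac{1}{2}\Vert p_n\Vert^2\,dt-\int_0^1 W_n(t,q_n)\,dt$, hence $\int_0^1 \Vert p_n\Vert^2\,dt$ is uniformly bounded since $\Vert W_n\Vert_{C^0}\to 0$ and the actions are bounded. In particular, there exists $t_n\in S^1$ with $\Vert p_n(t_n)\Vert$ bounded by an $n$-independent constant. Hamilton's equations for $G+W_n$ read $\dot q_n=p_n$ and $\dot p_n=-\nabla_q W_n(t,q_n)$, and from $\Vert W_n\Vert_{C^1}<c_n'<1/4$ the norm $\Vert \dot p_n\Vert$ is uniformly bounded. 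Integrating from $t_n$ yields a uniform $L^\infty$-bound $\Vert p_n(t)\Vert\leq C$, independent of $n$ and $t$. This in turn gives $\Vert \dot q_n\Vert=\Vert p_n\Vert\leq C$ uniformly.

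The second step is the compactness. The sequence $x_n=(q_n,p_n)$ is uniformly bounded (since $Q$ is compact) and equicontinuous by the Lipschitz bounds just obtained, so by Arzel\`a--Ascoli a subsequence converges in $C^0$ to a continuous loop $x=(q,p)$. The $C^1$-convergence $W_n\to 0$, combined with Hamilton's equations, gives $\dot q_n \to p$ and $\dot p_n\to 0$ uniformly, so $x$ is $C^1$ with $\dot q=p$ and $\dot p=0$. Bootstrapping on the Hamilton equations of $G$ shows $x$ is a smooth $1$-periodic orbit of $G$. Finally, the action functional is continuous in $C^1$-topology on loops lying in a fixed compact set, and $W_n\to 0$ in $C^0$, so
$$\mathcal{A}_G(x)=\lim_{n\to\infty}\mathcal{A}_{G+W_n}(x_n)=a,$$
whence $a\in\mathcal{S}(G)$, a contradiction.

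The only delicate point is ensuring that the action bound translates into a genuine $L^\infty$-bound on the momenta $p_n$; this is where the quadratic structure of $G$ is essential, and where the $C^1$-smallness of $W_n$ (rather than just $C^0$-smallness) is used to control $\dot p_n$. Once these bounds are in place, the Arzel\`a--Ascoli argument and the smooth bootstrap are standard.
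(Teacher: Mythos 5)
Your proposal is correct and follows essentially the same route as the paper: argue by contradiction, extract spectral values $a_k \in \mathcal{S}(G+W_{n_k})$ converging to $a$, and conclude $a \in \mathcal{S}(G)$. The only difference is that the paper asserts the final step tersely (``since $W_{n_k}\to 0$''), whereas you spell out the underlying compactness argument (uniform momentum bounds from the quadratic structure of $G$ and the $C^1$-smallness of $W_n$, then Arzel\`a--Ascoli and bootstrapping), which is exactly the mechanism the paper uses elsewhere (cf.\ the proof of Proposition \ref{proposition spectral invariant for degenerate Hamiltonians }).
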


\begin{proof}
Assume by contradiction that for all positive integers $N$ there exists a $n \geq N$ such that $[a-c_n,a+c_n] \cap \mathcal{S}(G+W_n) \neq \emptyset$. In particular, we can extract a strictly increasing subsequence $(n_k)_{k \in \N}$ such that $[a-c_{n_k},a+c_{n_k}] \cap \mathcal{S}(G+W_{n_k}) \neq \emptyset.$ Any fixed sequence $$a_k \in [a-c_{n_k},a+c_{n_k}] \cap \mathcal{S}(G+W_{n_k})$$ is bounded since $0<c_n \leq c$ and thus admits a convergent subsequence, still denoted by $a_k$. But $c_{n_k} \to 0$ for $k \to \infty$, which then forces $a_k$ to converge to $a$ with $$a \in \mathcal{S}(G),$$ since $W_{n_k} \to 0$. This contradicts the assumption on $a$.
\end{proof}

Now we extract a whole interval $$[a,b] \subseteq (e,d-2c]$$ for which Proposition \ref{proposition non-crossing assumptions hold asymptotically} then holds. Pick any $b \in (e,d-2c]$ with $b \notin \mathcal{S}(G)$, and apply Proposition \ref{proposition non-crossing assumptions hold asymptotically} to get a $N=N(b)$. Up to increasing $N$ we can assume $e< b-c_{n}$ for all $n \geq N$. We pick $a \in [b-c_{N},b)$ with $a \notin \mathcal{S}(G)$. In particular $e < a$, thus $$[a,b]\subseteq (e,d-2c].$$
\begin{claim no number}
There exists $N(a,b) \geq N(b)$ such that 
$$\forall n \geq N(a,b) \, \colon \quad [a-c_{n},b+c_{n}] \cap \mathcal{S}(G+W_{n}) = \emptyset.$$
\end{claim no number}
Indeed, since $a \notin \mathcal{S}(G)$, there exists a $N(a)$ such that $[a-c_{n},a+c_{n}]$ does not intersect $\mathcal{S}(G+W_{n})$ for all $n \geq N(a)$; cf.\ Proposition \ref{proposition non-crossing assumptions hold asymptotically}. Taking $N(a,b)=\max\{N(a),N(b)\}$ and $n \geq N(a,b)$ we get
$$\emptyset=\left([a-c_{n},a+c_{n}] \cup [b-c_{n},b+c_{n}]\right) \cap \mathcal{S}(G+W_{n})=[a-c_{n},b+c_{n}] \cap \mathcal{S}(G+W_{n}),$$
which proves the claim.

\begin{lemma}\label{lemma adiabatic argument}
Let $$b \in (e,d-2c] \setminus \mathcal{S}(G).$$ Then there exists $N=N(b) \in \N$  with the following property:

$$\mathrm{HF}^{b}_\bullet(G^-_{n}) \overset{\cong}{\longrightarrow} \mathrm{HF}^{b / \sigma}_\bullet(G^+_{n}), \quad \forall n \geq N(b),$$
induced by a concatenation of continuation morphisms. 
\end{lemma}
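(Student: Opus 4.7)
The plan is to realize the asserted map as a composition of filtered continuation maps along the linear homotopy $G^s_n$ from $G^-_n$ to $G^+_n$ already introduced before Lemma 4.5, with the ``action bar'' evolving as $s \mapsto b(s)$. The key input is the Non-Crossing Lemma, which prevents any spectral value from sweeping across the bar during the interpolation; everything else is the standard adiabatic argument, up to having to carry out the analysis in the non-compact setting.

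First, I would invoke the Claim preceding the statement: since $b \in (e, d-2c] \setminus \mathcal{S}(G)$, one produces some $a \in [b-c_N, b)$ with $a \notin \mathcal{S}(G)$ and an integer $N(b) := N(a,b)$ such that for every $n \geq N(b)$ the enlarged interval $[a-c_n, b+c_n]$ is disjoint from $\mathcal{S}(G + W_n)$. Consequently, for every $r \in [a,b]$ the hypothesis $[r-c_n, r+c_n] \cap \mathcal{S}(G+W_n) = \emptyset$ of Lemma 4.5 is met, and one concludes $r(s) \notin \mathcal{S}(G^s_n)$ for every $s \in [0,1]$. Specialising to $r = b$ shows that $\mathrm{CF}^{b(s)}_\bullet(G^s_n)$ is well defined at every $s$, with endpoints $b(0) = b$ and $b(1) = b/\sigma$.

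Next I would partition $[0,1]$ into finitely many sub-intervals $0 = s_0 < \cdots < s_k = 1$ so fine that on each $[s_i, s_{i+1}]$ the sub-homotopy $\{G^s_n\}_{s \in [s_i, s_{i+1}]}$ induces a well-defined filtered continuation map
$$\Phi_i \, \colon \mathrm{CF}^{b(s_i)}_\bullet(G^{s_i}_n) \longrightarrow \mathrm{CF}^{b(s_{i+1})}_\bullet(G^{s_{i+1}}_n).$$
Existence and action-preservation of $\Phi_i$ follow from the standard action-energy identity for continuation cylinders (exactly as in the proof of Proposition 3.5) combined with the fact that $b(s)$ stays bounded away from $\mathcal{S}(G^s_n)$ on the sub-interval, which is the content of the Non-Crossing Lemma. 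The quadratic structure of $G^s_n$, uniform in $s$, provides via the AS estimates the $L^\infty$-bound on continuation cylinders that keeps them inside a fixed compact subset of $T^*Q$, so the moduli spaces behave as in the closed setting. A reverse sub-homotopy yields a chain-level two-sided inverse up to chain homotopy, so each $\Phi_i$ descends to an isomorphism on filtered Floer homology.

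Composing the $\Phi_i$'s produces the asserted isomorphism
$$\mathrm{HF}^b_\bullet(G^-_n) \overset{\cong}{\longrightarrow} \mathrm{HF}^{b/\sigma}_\bullet(G^+_n).$$
The main obstacle is guaranteeing filtration preservation \emph{throughout} the homotopy and not merely at the endpoints --- the value $b(s)$ must not meet the spectrum for any $s$, and this is precisely what the Non-Crossing Lemma delivers. A secondary technical point is the non-compactness of $T^*Q$, which forces one to invoke the AS-type quadratic bounds on $G^s_n$ uniformly in $s$ in order to obtain a priori $L^\infty$-bounds for the continuation solutions; this is already built into the construction of the homotopy, whose three constituent pieces all satisfy (H0)--(H2) with uniform constants.
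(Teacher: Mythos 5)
Your overall strategy --- Non-Crossing Lemma plus a subdivision/adiabatic argument along the homotopy $G^s_n$ --- is the same as the paper's, and your use of the preceding Claim to obtain the interval $[a,b]$ and the threshold $N(a,b)$ is correct. However, there is a genuine gap in the construction of the individual maps $\Phi_i \colon \mathrm{CF}^{b(s_i)}_\bullet(G^{s_i}_n) \to \mathrm{CF}^{b(s_{i+1})}_\bullet(G^{s_{i+1}}_n)$. The homotopy $G^s_n$ is monotone increasing (since $G^+_n - G^-_n = (1-\tau_{c+d})(\sigma G - f\circ G)\ge 0$), so the action-energy identity only gives $\mathcal{A}_{G^{s_{i+1}}_n}(y) \le \mathcal{A}_{G^{s_i}_n}(x)$; the continuation map therefore lands in $\mathrm{CF}^{b(s_i)}_\bullet(G^{s_{i+1}}_n)$, \emph{not} in $\mathrm{CF}^{b(s_{i+1})}_\bullet(G^{s_{i+1}}_n)$ with the strictly smaller bar $b(s_{i+1}) < b(s_i)$. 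To lower the bar you must know that $\mathcal{S}(G^{s_{i+1}}_n)$ misses the gap $(b(s_{i+1}), b(s_i)]$, and the Non-Crossing Lemma applied to the single curve $b(\cdot)$ does not give this: it controls only the strip \emph{below} the graph of $b(s)$ (between $a(s)$ and $b(s)$), whereas the gap in question lies \emph{above} $b(s_{i+1})$ at time $s_{i+1}$. Spectral values could in principle sit in that gap without the curve $b(s)$ ever meeting them. The same issue reappears when you claim the reverse homotopy gives a filtered two-sided inverse (there the continuation map \emph{increases} action, so one again needs spectrum-free intervals, not just a spectrum-free curve).

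The paper's proof avoids this by genuinely using the two-curve strip you mention but then discard. One moves \emph{horizontally} at a \emph{constant} level, starting from the lower graph $a(s_i)$ and continuing only until that constant level meets the upper graph, i.e.\ until $s_{i+1}$ with $b(s_{i+1}) = a(s_i)$; during this move the constant level stays inside the strip, so the standard constant-level invariance of filtered Floer homology applies. One then \emph{drops vertically} at fixed $s_{i+1}$ from $b(s_{i+1})$ to $a(s_{i+1})$, which is legitimate precisely because the strip guarantees $[a(s_{i+1}), b(s_{i+1})] \cap \mathcal{S}(G^{s_{i+1}}_n) = \emptyset$. Iterating this zig-zag reaches $b(1) = b/\sigma$. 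To repair your argument you should either adopt this zig-zag, or supply a separate argument (e.g.\ via case 3 of the Non-Crossing Lemma, which localises the relevant spectrum to $\frac{1}{r}\mathcal{S}(G+W_n) + [-c_n,c_n]$) showing that for a sufficiently fine subdivision the intervals $(b(s_{i+1}), b(s_i)]$ are spectrum-free --- neither is automatic from what you have written.
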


\begin{proof}
Let $[a,b] \subseteq (e,d-2c]$ and $N=N(a,b)$ be as in the above claim and the discussion preceding the claim. Then for every $v \in [a,b] \setminus \mathcal{S}(G)$ we can apply Lemma \ref{lemma non-crossing} since $[v-c_{n},v+c_{n}]\subseteq [a-c_{n},b+c_{n}]$. This has the following implication:  the whole strip $S \subseteq \R^2$ bounded by the graphs $(s,b(s))$ and $(s,a(s))$ does not belong to the spectrum of $\mathcal{S}(G_{n}^s)$, i.e.\ $$v(s) \notin \mathcal{S}(G_{n}^s), \quad \forall s \in [0,1], \, v \in [a,b].\footnote{This is nicely pictured in \cite[figure 3]{ms2011}.}$$ In particular, we can perform a ``zig-zag" adiabatic argument: for $s=0$ we have that all $v \in [a,b]$ satisfy
$$v=v(0) \notin \mathcal{S}(G_{n}^0)=\mathcal{S}(G_{n}^-).$$
That is, $[a,b]$ does not contain any critical value of $\mathcal{A}_{G^-_{n}}$, which implies
$$\mathrm{HF}^b_\bullet(G^-_{n})=\mathrm{HF}^a_\bullet(G^-_{n}).$$
Now we move from $(0,a(0))$ horizontally within our strip $S$ until we hit the upper graph of $b(s)$, i.e.\ $(s_1,a(0))=(s_1,b(s_1))$. Note that $a(0)=a=b(s_1)$. By definition of our strip $S$ we have $a(0) \notin \mathcal{S}(G_{n}^s)$ for $s \in [0,s_1]$, hence by standard Floer theory
$$\mathrm{HF}_\bullet^{a}(G^-_{n}) \cong \mathrm{HF}_\bullet^{a}(G^{s_1}_{n})=\mathrm{HF}_\bullet^{b(s_1)}(G^{s_1}_{n}).$$ As before, the choice of our strip $S$ implies that $[a(s_1),b(s_1)]$ does not intersect the spectrum of $G^{s_1}_n$ and thus we can ``drop" again:
$$\mathrm{HF}^{b(s_1)}(G^{s_1}_n)=\mathrm{HF}^{a(s_1)}(G^{s_1}_n).$$ Repeating the previous ``horizontal move" we see that the whole algorithm can be performed a finite amount of times until we reach $(1,w(1))$ for some $w \in[a,b]$ that is not necessarily equal to $b$. At that point we ``jump up" (analogously to the ``dropping") to $(1,b(1))=(1,b/\sigma)$ without affecting the Floer homology. This finishes the proof.
\end{proof}

\begin{remark}
The size of the interval $[a,b]$ used in the proof above does not matter --- we only needed a strictly smaller $a$ that allowed us to bounce in a ``zig-zag" fashion towards $b(1)=b/ \sigma$.
\end{remark}

Combining this with the isomorphisms described in Section \ref{section Spectral Invariants and Minimax Values} we obtain:

\begin{corollary}\label{corollary pinching diagram}

Let $b \in (e,d-2c] \setminus \mathcal{S}(G)$ and $N=N(b)$ as in Lemma \ref{lemma adiabatic argument}. Denote by $L_{n}^+$ the Lagrangian dual to $G^+_{n}=\sigma G + W_{n}$. Then for all $n \geq N$ we have the following commutative diagram:

\begin{center}
	\begin{tikzcd}
	& H_\bullet\left(\lbrace \mathcal{E}_{L_{n}^+} \leq b/\sigma \rbrace\right) \arrow{d}{\mathrm{AM}} \\
	& \mathrm{HM}_\bullet^{b/ \sigma}\left(\mathcal{E}_{L_{n}^+}\right) \arrow{d}{\mathrm{AS}} & \\
	\mathrm{HF}^{b}_\bullet\left(G_{n}^-\right) \arrow{r}{\cong}\arrow{dr} & \mathrm{HF}_\bullet^{b / \sigma}(G_{n}^+) \arrow{r} & \mathrm{HF}_\bullet^b\left(G_{n}^+\right) \\
	& \mathrm{HF}_\bullet^{b}\left(K_{n}\right) \arrow{ur}
	\end{tikzcd}
\end{center}

\end{corollary}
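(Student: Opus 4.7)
The plan is to build the diagram out of three essentially independent pieces and then verify commutativity where they meet. The bottom-left isomorphism $\mathrm{HF}_\bullet^{b}(G_n^-) \cong \mathrm{HF}_\bullet^{b/\sigma}(G_n^+)$ is supplied directly by Lemma \ref{lemma adiabatic argument}, valid for $n \geq N(b)$. The two upper vertical arrows are the filtered Abbondandolo--Majer isomorphism $\Upsilon$ and the filtered Abbondandolo--Schwarz isomorphism $\Theta$ from Section \ref{section Spectral Invariants and Minimax Values}, specialised at the level $b/\sigma$; both respect the action filtration by construction. Finally, the map $\mathrm{HF}_\bullet^{b/\sigma}(G_n^+) \to \mathrm{HF}_\bullet^{b}(G_n^+)$ is the inclusion-induced morphism, available because $b/\sigma < b$.

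The substantive new step is to insert $K_n$ between $G_n^-$ and $G_n^+$. The key observation is the pointwise inequality $G_n^- \leq K_n \leq G_n^+$. The first inequality uses $G \leq F$ together with monotonicity of $f$, yielding $f \circ G \leq f \circ F$; on the region $\{\tau_{c+d}(\Vert p \Vert) = 1\}$ all three Hamiltonians share the common tail $\sigma G + W_n$. The second inequality reduces to $(1-\tau_{c+d}) \cdot (f \circ F - \sigma G) \leq 0$, which holds because $f \circ F \leq F \leq \sigma G$ pointwise. These inequalities let us connect consecutive Hamiltonians by monotone increasing homotopies, and the energy identity displayed in the proof of Proposition \ref{proposition spectral invariant is C0 lipschitz continuous} shows that along such a homotopy the action cannot increase along Floer trajectories; consequently the associated continuation morphisms descend to every filtration level, giving the factorisation $\mathrm{HF}_\bullet^{b}(G_n^-) \to \mathrm{HF}_\bullet^{b}(K_n) \to \mathrm{HF}_\bullet^{b}(G_n^+)$.

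Commutativity of the triangle in the bottom now follows from the standard composition property of continuation morphisms: the concatenation $G_n^- \to K_n \to G_n^+$ is chain-homotopic to the direct monotone continuation $G_n^- \to G_n^+$ induced by the convex interpolation $G_n^s$ of Lemma \ref{lemma adiabatic argument}, and this direct continuation factors through the level $b/\sigma$ precisely by the zig-zag adiabatic argument that proved that lemma. Commutativity of the top square is naturality of $\Upsilon$ and $\Theta$ with respect to the filtered inclusion at the levels $b/\sigma$ and $b$. The main technical obstacle is the careful bookkeeping needed to check that the various monotone homotopies chosen between the three Hamiltonians interact correctly with the filtration, and in particular that the zig-zag representing the Lemma \ref{lemma adiabatic argument} isomorphism agrees on homology with the direct continuation $G_n^- \to G_n^+$ composed with the filtered inclusion; both points reduce to the same monotone action-energy estimate already exploited throughout Section \ref{section Pinching}.
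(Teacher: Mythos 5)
Your proposal is correct and follows the route the paper intends: the corollary is stated without proof as an immediate combination of Lemma \ref{lemma adiabatic argument}, the filtered Abbondandolo--Majer and Abbondandolo--Schwarz isomorphisms, and the monotone continuation maps coming from the pointwise inequalities $G_n^- \leq K_n \leq G_n^+$, all of which you verify correctly (in particular the action estimate from the proof of Proposition \ref{proposition spectral invariant is C0 lipschitz continuous} with $\Delta_{0,1}\leq 0$ for increasing homotopies). The only cosmetic slip is your reference to a ``top square'': the diagram's upper portion is just a composable column of isomorphisms feeding into $\mathrm{HF}_\bullet^{b/\sigma}(G_n^+)$, so the only commutativity to check is the bottom triangle, which you handle via the composition property of continuation morphisms exactly as intended.
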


Denote by $L$ the dual Lagrangian to $G$, i.e.\ $$L(q,v)=\frac{1}{2} \Vert v \Vert^2,$$ and also observe that $L_{n}^+$ defined as in Corollary \ref{corollary pinching diagram} takes the form $$L_{n}^+(t,q,v)=\frac{1}{2\sigma} \Vert v \Vert^2-W_{n}(t,q).$$
We define $L^+ := \lim_{n \to \infty} L^+_n$ and observe
$$L^+=\frac{1}{\sigma} L.$$

\subsection{The Pinching Inequality}

 The next theorem can be viewed as the main building block of our main result.

\begin{theorem}\label{theorem spectral invariants relations}
Let $\alpha \in H_\bullet(\mathcal{L}Q) \setminus \{0\}$. Then

\begin{align*}
& c_\alpha(G^-)=\sigma \cdot c_\alpha(\sigma G),  \\
& c_\alpha(\sigma G) \leq c_\alpha(K) \leq c_\alpha(G^-).
\end{align*}

\end{theorem}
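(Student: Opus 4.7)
The plan is to handle the two claims separately: the middle chain of inequalities is a consequence of plain monotonicity of spectral invariants under pointwise domination of Hamiltonians, while the equality $c_\alpha(G^-) = \sigma \cdot c_\alpha(\sigma G)$ is a direct corollary of the pinching isomorphism from Lemma \ref{lemma adiabatic argument} (Corollary \ref{corollary pinching diagram}). From $f \circ G \leq f \circ F \leq \sigma G$ together with the shared $\tau_{c+d} \cdot \sigma G$ tails and the common perturbation $W_n$, one sees that $G^-_n \leq K_n \leq G^+_n$ pointwise. For any two Hamiltonians $H^0 \leq H^1$ in the AS class, the computation already carried out in the proof of Proposition \ref{proposition spectral invariant is C0 lipschitz continuous} (using the monotone linear homotopy with $\dot{\beta} \geq 0$, so that the extra term is non-positive) yields $\mathcal{A}_{H^1}(y) \leq \mathcal{A}_{H^0}(x)$ along Floer trajectories. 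This produces a filtered continuation morphism $\mathrm{HF}^a_\bullet(H^0) \to \mathrm{HF}^a_\bullet(H^1)$ that is canonically the identity on $H_\bullet(\mathcal{L}Q)$ via AS on the unfiltered level, and hence $c_\alpha(H^1) \leq c_\alpha(H^0)$. Applying this to $G^-_n \leq K_n \leq G^+_n$ and passing to the $W_n \to 0$ limit via Definition \ref{definition spectral invariant for degenerate Hamiltonians} yields the sandwich.

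For the scaling equality, fix $d$ large enough that $\sigma \cdot c_\alpha(\sigma G) < d-2c$ and work at the perturbed level for $n \gg 1$. By Lemma \ref{lemma adiabatic argument}, for any $b \in (e, d-2c] \setminus \mathcal{S}(G)$ and $n \geq N(b)$ one has an isomorphism $\mathrm{HF}^b_\bullet(G^-_n) \cong \mathrm{HF}^{b/\sigma}_\bullet(G^+_n)$, obtained as a concatenation of (filtered) continuation morphisms along the family $G_n^s$. On the unfiltered level this coincides with the canonical AS identification of both groups with $H_\bullet(\mathcal{L}Q)$, which is precisely the content of the commutativity of Corollary \ref{corollary pinching diagram}. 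Consequently a non-zero class $\alpha \in H_\bullet(\mathcal{L}Q)$ belongs to $\mathrm{im}\bigl(\mathrm{HF}^b_\bullet(G^-_n) \to \mathrm{HF}_\bullet(G^-_n)\bigr)$ if and only if it belongs to $\mathrm{im}\bigl(\mathrm{HF}^{b/\sigma}_\bullet(G^+_n) \to \mathrm{HF}_\bullet(G^+_n)\bigr)$. Taking infima over admissible $b$ (permissible since $\mathcal{S}(G)$ is closed and discrete by Lemma \ref{lemma spectrality non-degenerate case}) yields $c_\alpha(G^-_n) = \sigma \cdot c_\alpha(G^+_n)$, and sending $n \to \infty$ via Definition \ref{definition spectral invariant for degenerate Hamiltonians} gives the desired identity.

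The main technical delicacy is verifying that the zig-zag isomorphism of Lemma \ref{lemma adiabatic argument} genuinely intertwines the two AS identifications of $\mathrm{HF}_\bullet(G^-_n)$ and $\mathrm{HF}_\bullet(G^+_n)$ with $H_\bullet(\mathcal{L}Q)$. This is formal once one accepts that continuation morphisms in the AS framework commute with the AM and AS chain isomorphisms on the nose in unfiltered homology, but tracking these identifications through each rung of the zig-zag is the careful part. A secondary bookkeeping point is choosing $d$ independently of $n$ large enough that both $c_\alpha(G^-_n)$ and $\sigma \cdot c_\alpha(G^+_n)$ lie in the admissible window $(e, d-2c]$ for all sufficiently large $n$; this is possible because the sequences are bounded uniformly in $n$ by the $C^0$-continuity of Proposition \ref{proposition spectral invariant is C0 lipschitz continuous}.
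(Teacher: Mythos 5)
Your treatment of the sandwich $c_\alpha(\sigma G) \leq c_\alpha(K) \leq c_\alpha(G^-)$ via monotone continuation maps attached to the pointwise ordering $G^-_n \leq K_n \leq G^+_n$ is correct and is essentially the paper's own argument (the paper phrases it through a commutative diagram of filtered continuation maps; since the three Hamiltonians agree on $\{G \geq 4d\}$, the required $C^0$-bounds for these continuations hold).

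The equality $c_\alpha(G^-) = \sigma \cdot c_\alpha(\sigma G)$, however, has a genuine gap at the step ``taking infima over admissible $b$ yields $c_\alpha(G^-_n) = \sigma \cdot c_\alpha(G^+_n)$.'' The isomorphism $\mathrm{HF}^b_\bullet(G^-_n) \cong \mathrm{HF}^{b/\sigma}_\bullet(G^+_n)$ of Lemma \ref{lemma adiabatic argument} is only available for $n \geq N(b)$, and $N(b)$ depends on $b$ (it comes from Proposition \ref{proposition non-crossing assumptions hold asymptotically}, i.e.\ from how large $n$ must be before $[b-c_n,b+c_n]$ clears $\mathcal{S}(G+W_n)$). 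For a \emph{fixed} $n$ you therefore cannot let $b$ decrease towards $c_\alpha(G^-_n)$: as $b$ varies, the threshold $N(b)$ may exceed $n$, and nothing guarantees that the set of $b$'s admissible for that particular $n$ accumulates at the spectral value. This is precisely the obstruction the author flags in Remark \ref{remark subtlety in proof}. The paper accordingly never asserts the equality at a fixed perturbed level; instead it introduces the sequences $B_m = c_\alpha(G^-) + c_m$ and $D_m = \sigma\left(c_\alpha(\sigma G)+c_m\right)$, chooses a diagonal subsequence $n_m \geq \max\{m, N(B_m)\}$, and uses the $C^0$-Lipschitz estimate $\vert c_\alpha(G^-_{n_m}) - c_\alpha(G^-)\vert < c_{n_m} \leq c_m$ to guarantee that $\alpha$ is seen at level $B_m$ for that specific $n_m$; only then does the filtered isomorphism give $c_\alpha(G^+_{n_m}) \leq B_m/\sigma$, and the limit $m \to \infty$ yields one inequality, the reversed zig-zag applied to $D_m$ giving the other. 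A secondary omission: your uniform-boundedness remark only handles the upper end of the window $(e,d-2c]$; the lower bound $B_m, D_m > e$ is not automatic and is obtained in the paper by spectrality together with Proposition \ref{proposition control of gamma} (which forces the carrier into $\{G \leq d\}$, so $c_\alpha(G^-) \in \mathcal{S}(G)$) and by choosing $c$ small enough that $e = 4(\sigma+4)c$ lies below the positive spectrum of $G$.
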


\begin{proof}
Let $b>0$ big enough such that $\alpha$ is in the image of $H_\bullet\left( \lbrace \mathcal{E}_L \leq b-c\sigma \rbrace \right) \to H_\bullet(\mathcal{L}Q).$\footnote{Note that by choice of $L$ we have $\mathcal{E}_L=\mathcal{E}$.} In particular, if $q \in \lbrace \mathcal{E}_L \leq b-c\sigma \rbrace$, then
$$\mathcal{E}_{L_{n}^+}(t,q) \leq \frac{1}{\sigma}\mathcal{E}_L(q)+c \leq \frac{b}{\sigma}, \quad \forall n \in \N.$$
Thus $\alpha$ can be viewed as an element in $H_\bullet \left( \lbrace \mathcal{E}_{L_{n}^+} \leq b / \sigma \rbrace \right)$. Accordingly, we take $d$ so big and $c$ so small that $$b+\sigma c< d-2c.$$ Up to slightly perturbing $b$ we may also assume $b \notin \mathcal{S}(G)$ and $b > e$ (if not, increase $b$ and $d$).
By Corollary \ref{corollary pinching diagram} we can send $\alpha$ through the commutative diagram and (abusively) still denote by $\alpha$ the Floer/Morse class in the corresponding group for each $n \geq N(b)$. Proposition \ref{proposition spectral invariant for degenerate Hamiltonians } and the choice of $b$ tells us that
\begin{align*}
\lim_{n\to \infty}c_\alpha(G^-_{n})&=c_\alpha(G^-) \leq b, \\
 \lim_{n \to \infty}c_\alpha(K_{n})&=c_\alpha(K) \leq b, \\
 \lim_{n \to \infty}c_\alpha(G_{n}^+)&=c_\alpha(\sigma G) \leq \frac{b}{\sigma}.
\end{align*}
These limits allude to the proof strategy: let the action filtration $b$ run towards $c_\alpha(G^{-})$ and use the isomorphism
$\mathrm{HF}^b_{\bullet}(G^-_n) \cong \mathrm{HF}^{b/\sigma}_\bullet(G^+_n)$ in order to conclude $c_\alpha(G^+) \leq \sigma c_\alpha(G^-)$ and then use symmetry of the argument for the other inequality. This however, also requires to let $n$ go to infinity, while $n \geq N(b)$ has to be satisfied. Define
$$B_{m}=c_\alpha(G^-)+c_{m}, \quad C_{m}=c_\alpha(K)+c_{m}, \quad D_{m}=\sigma \cdot \left( c_\alpha(\sigma G) +c_{m} \right).$$
We claim that $$B_m \in (e,d-2c], \quad \forall m \in \N$$ holds so that we can apply Corollary \ref{corollary pinching diagram} with action filtration $B_m$. By definition of $B$ and $c_m<c$ we get $$B_m \leq b+c<d-2c \quad \forall m  \in \N.$$ By spectrality, i.e.\ Proposition \ref{proposition spectral invariant for degenerate Hamiltonians }: $$c_\alpha(G^-)=\mathcal{A}_{G^-}(x) \in \mathcal{S}(G^-).$$ The above bound and the first bullet point in Proposition \ref{proposition control of gamma} for $G^-$ imply 
$$ G(x) \leq d.$$ In particular, $\tau_{c+d}$ vanishes and hence $ G(x)=G^-(x)$ with $x \in \mathcal{P}(G)$, but for $c$ sufficiently small\footnote{Choose $c$ so small that $e=4(\sigma+4)c$ lies below the non-zero spectrum of $G$.} the spectrum of $G$ is strictly greater than $e$, thus
$$e < G(x)=G^-(x)=c_\alpha(G^-) \leq B_m,$$ which proves the lower bound and thus that the sequence $B_m$ is contained in $(e,d-2c]$. Up to perturbing $c_m$ we may thus assume
$$B_m \in (e,d-2c] \setminus \mathcal{S}(G)$$ and can therefore apply Lemma \ref{lemma adiabatic argument} to every single $B_m$:
 
$$\mathrm{HF}^{B_m}_\bullet(G^-_{n_m}) \overset{\cong}{\longrightarrow} \mathrm{HF}_\bullet^{B_{m} / \sigma}(G^+_{n_m}), \quad \forall n_m \geq N(B_m).$$
Pick $(n_m)_{m \in \N}$ a strictly increasing sequence with $n_m \geq m$. Using Proposition \ref{proposition spectral invariant is C0 lipschitz continuous} and Proposition \ref{proposition spectral invariant for degenerate Hamiltonians } we get
\begin{align*}
\vert c_\alpha(G_{n_m}^-)-c_\alpha(G^-) \vert&=\lim_{k \to \infty} \vert c_\alpha(G_{n_m}^-)-c_\alpha(G_{k}^-) \vert \\
&\leq \lim_{k \to \infty} \Vert W_{n_m}-W_{k} \Vert_{C^0} \\
&\leq \Vert W_{n_m} \Vert_{C^0} \\
&<c_{n_m},
\end{align*}
thus
$$c_\alpha(G^-_{n_m}) \leq c_\alpha(G^-)+c_{n_m}\leq c_\alpha(G^-)+c_{m}=B_{m}.$$
This means that the LHS of the morphism above ``sees" $\alpha$, and so does the RHS. More precisely, we have the commutative diagram 
\begin{center}
	\begin{tikzcd}
	\mathrm{HF}_\bullet^{B_{m}}(G^-_{n_m}) \arrow{r}{\cong}\arrow{d}{i} & \mathrm{HF}_\bullet^{B_{m} / \sigma}(G^+_{n_m}) \arrow{d}{i} \\
	\mathrm{HF}_\bullet(G^-_{n_m}) \arrow{r}{\cong} & \mathrm{HF}_\bullet(\sigma G^+_{n_m}),
	\end{tikzcd}
\end{center}
where $\alpha$ belongs to the image of the left vertical map. By commutativity of the diagram and the definition of spectral invariants we obtain
$$c_\alpha(G^+_{n_m}) \leq \frac{B_m}{\sigma}=\frac{c_\alpha(G^-)+c_m}{\sigma}.$$ 
Taking the limit as $m$ goes to infinity then proves:
$$\sigma \cdot c_\alpha(\sigma G) \leq  c_\alpha(G^-).$$ 

For the other inequality we consider $D_{m}$ defined as above and claim again, as before, that 
$$D_m \in (e,d-2c], \quad \forall m \in \N.$$ Proposition \ref{proposition floer and morse minimax agree} and the choice of $b$ imply $c_\alpha(G^+_{n})=c_\alpha(L^+_{n}) \leq b/\sigma$ and thus we get $$c_\alpha(\sigma G)=c_\alpha(L^+) \leq b/ \sigma$$ by Proposition \ref{proposition spectral invariant for degenerate Hamiltonians }. Therefore $$D_m \leq b+\sigma c_{m} \leq b+\sigma c< d-2c, \quad \forall m \in \N.$$ For the lower bound observe $\sigma c_\alpha(\sigma G) \leq D_m$, but $$c_\alpha(\sigma G) \in \mathcal{S}(\sigma G)=\frac{1}{\sigma}\mathcal{S}(G),$$ by Lemma \ref{lemma summary Heistercamp}. As before, we may assume that $e$ bounds $\mathcal{S}(G) \cap (0,+\infty)$ from below. In particular we end up with $$e < \sigma c_{\alpha}(\sigma G) \leq D_m,$$ which proves the lower bound and hence the claim.
We are now in a position to apply Lemma \ref{lemma adiabatic argument} again (with reversed arrow and after perturbing $c_m$):
$$\mathrm{HF}^{\frac{D_{m}}{\sigma}}_\bullet(G^+_{n_m}) \overset{\cong}{\longrightarrow} \mathrm{HF}_\bullet^{D_{m}}(G^-_{n_m}), \quad \forall n_m \geq N(D_m).$$
As before we obtain
$$c_\alpha(G^+_{n_m}) \leq c_\alpha(\sigma G)+c_{n_m}\leq c_\alpha(\sigma G)+c_{m}=D_m.$$
Thus the LHS above sees $\alpha$, and thus the analogous commutative diagram grants
$$c_\alpha(G^-_{n_m}) \leq D_{m}=\sigma\left(c_\alpha(\sigma G)+c_{m}\right).$$
Taking the limit and also applying the previous inequality gives:
$$c_\alpha(G^-) \leq  \sigma c_\alpha( \sigma G) \leq c_\alpha(G^-).$$ This proves the desired first equality.

The inequalities are less involved: we use yet another commutative diagram:

\begin{center}
	\begin{tikzcd}
	\mathrm{HF}_\bullet^a(G^-_{n}) \arrow{r}\arrow{d}{i} & \mathrm{HF}_\bullet^a(K_{n}) \arrow{d}{i} \\
	\mathrm{HF}_\bullet(G^-_{n}) \arrow{r}{\cong} & \mathrm{HF}_\bullet(K_{n}).
	\end{tikzcd}
\end{center}
The same reasoning as in the previous two cases above gives us
$$c_\alpha(K_{n}) \leq c_\alpha(G^-_{n}), \quad \forall n \in \N.$$
Taking the limit proves
$$c_\alpha(K) \leq c_\alpha(G^-).$$ The other inequality
$$c_\alpha(\sigma G) \leq c_\alpha(K)$$ is proved the same way and thus the proof is complete.
\end{proof}

\begin{remark}\label{remark subtlety in proof}
The crux of the proof of Theorem \ref{theorem spectral invariants relations} comes from the fact that the filtration isomorphism from Lemma \ref{lemma adiabatic argument} does not hold for all $n$, i.e.\ changing level $b$ might increase $N(b)$. This forces us to use the somewhat convoluted sequence argument instead of just reading off the spectral invariant equality as done in Proposition \ref{proposition floer and morse minimax agree}.
\end{remark}

Using Proposition \ref{proposition floer and morse minimax agree} and Proposition \ref{proposition LS spectral invariant and limit for degenerate Lagrangians agree} we obtain
$$\sigma c_\alpha(\sigma G)=\sigma c_\alpha(L^+)=\sigma c_\alpha(L / \sigma).$$
It readily follows from the definition of minimax values that $$\sigma c_\alpha\left(L / \sigma\right)=c_\alpha(L), \quad \forall \alpha \in H_\bullet(\mathcal{L}Q) \setminus \{0\}.$$
Combining these with Theorem \ref{theorem spectral invariants relations} grants:

\begin{corollary}\label{corollary - spectral invariants do not depend on d, middle ones have bounded range}
Let $\alpha \in H_\bullet\left( \mathcal{L}Q \right) \setminus \{0\}$. Then the spectral invariants $c_\alpha(G^-)$ do not depend on $d$ and satisfy:
$$c_\alpha(G^-)=\sigma \cdot c_\alpha(\sigma G)=c_\alpha(L).$$
In particular
$$c_\alpha(K) \leq c_\alpha(L).$$
\end{corollary}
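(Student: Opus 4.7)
The proof is essentially a direct assembly of Theorem \ref{theorem spectral invariants relations} with the identifications provided by Proposition \ref{proposition floer and morse minimax agree} and Proposition \ref{proposition LS spectral invariant and limit for degenerate Lagrangians agree}, exactly as indicated in the paragraph immediately preceding the statement.

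First I would identify $c_\alpha(\sigma G)$ with a minimax value of the kinetic Lagrangian. The Hamiltonian $\sigma G$ is degenerate, so one works with the perturbed sequence $G^+_n = \sigma G + W_n$ and its dual Lagrangians $L^+_n = L/\sigma - W_n$, both of which satisfy the AS conditions. For each $n$, Proposition \ref{proposition floer and morse minimax agree} gives $c_\alpha(G^+_n) = c_\alpha(L^+_n)$. Passing to the limit $n \to \infty$, Proposition \ref{proposition spectral invariant for degenerate Hamiltonians } handles the Hamiltonian side and yields $c_\alpha(\sigma G) = \lim_n c_\alpha(G^+_n)$, while Proposition \ref{proposition LS spectral invariant and limit for degenerate Lagrangians agree} handles the Lagrangian side and yields $c_\alpha(L^+) = \lim_n c_\alpha(L^+_n)$. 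Since $L^+ = L/\sigma$, one obtains $c_\alpha(\sigma G) = c_\alpha(L/\sigma)$.

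Next, the rescaling identity $\sigma \cdot c_\alpha(L/\sigma) = c_\alpha(L)$ is elementary from the singular minimax definition: for every loop $q$ one has $\mathcal{E}_{L/\sigma}(q) = \tfrac{1}{\sigma} \mathcal{E}_L(q)$, so taking the sup over a singular chain $|\eta|$ and then the inf over representatives of $\alpha$ both commute with the scalar factor $1/\sigma$. Combining this with the first equality of Theorem \ref{theorem spectral invariants relations} then chains together as
$$c_\alpha(G^-) \;=\; \sigma \cdot c_\alpha(\sigma G) \;=\; \sigma \cdot c_\alpha(L/\sigma) \;=\; c_\alpha(L).$$

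Finally, the independence of $c_\alpha(G^-)$ from the parameter $d$ (which entered only through the cutoff $\tau_{c+d}$ in the construction of $G^-$) falls out for free, since the right-hand side $c_\alpha(L)$ depends only on the kinetic Lagrangian and the class $\alpha$. The inequality $c_\alpha(K) \leq c_\alpha(L)$ is then immediate by substituting the just-proved equality into the second line of Theorem \ref{theorem spectral invariants relations}. There is no real obstacle at this stage: all the analytic content lives in Theorem \ref{theorem spectral invariants relations} and in the propositions matching Floer, Morse, Lusternik--Schnirelmann, and singular minimax values.
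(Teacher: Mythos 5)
Your proposal is correct and follows essentially the same route as the paper: the paper likewise obtains $\sigma c_\alpha(\sigma G)=\sigma c_\alpha(L^+)=\sigma c_\alpha(L/\sigma)$ from Proposition \ref{proposition floer and morse minimax agree} and Proposition \ref{proposition LS spectral invariant and limit for degenerate Lagrangians agree}, applies the elementary rescaling identity $\sigma c_\alpha(L/\sigma)=c_\alpha(L)$, and then combines with Theorem \ref{theorem spectral invariants relations}. Your additional remarks on the $d$-independence and on deducing $c_\alpha(K)\leq c_\alpha(L)$ match the intended reading exactly.
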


\section{Index Relations}\label{section index relations}

\subsection{The Robbin--Salamon Index}
One of the key steps in the proof of Theorem \ref{theorem main section existence} is the use of an index iteration formula due to M. de Gosson, S. de Gosson and Piccione (GGP) \cite{ggp2008}. In order to correctly apply the latter, one needs some a priori control on the Conley--Zehnder/Robbin--Salamon index of the carriers of some carefully chosen spectral invariants. In this section we mainly recall some main properties of the Robbin--Salamon index \cite{rs1993} and describe how the index of carriers of $c_{\alpha}(K)$ behave for any non-zero cohomology class $\alpha \in H_i(\mathcal{L}Q)$ with respect to the degree $i \in \N$.

The Robbin--Salamon index picks two Lagrangian paths on a fixed $2n$-dimensional symplectic vector space, and associates to it a value in $\frac{1}{2} \Z$. For a path $\Gamma \in \mathcal{S}(2n)$, i.e.\
$$\Gamma \, \colon [0,1] \longrightarrow \mathrm{Sp}(2n), \quad \Gamma(0)=\mathrm{id},$$
one defines
$$\mu_{\mathrm{RS}}(\Gamma)=\mu_{\mathrm{RS}}(\mathrm{gr}(\Gamma),\Delta),$$
where $\mathrm{gr}(\Gamma)$ is the graph of $\Gamma$ in the symplectic vector space $\left(\R^{2n} \times \R^{2n},-\omega\oplus \omega\right)$ and $\Delta$ is the diagonal in the latter. With this definition, one obtains

$$\mu_{\mathrm{CZ}}(\Gamma)=\mu_{\mathrm{RS}}(\Gamma), \quad \forall \Gamma \in \mathcal{S}^*(2n)\, :=\left\lbrace \Gamma \in \mathcal{S}(2n), \; \big | \, \det(\Gamma(1)-\mathrm{id}) \neq 0 \right \rbrace.$$
Many authors, GGP included, define the Conley--Zehnder index like this in the first place. There are several advantages of the Robbin--Salamon index, one being that there is no ambiguity whenever $\Gamma$ is a degenerate symplectic path, i.e.\ $\det(\Gamma(1)-\mathrm{id})=0$.

The following proposition is a standard property of the Robbin--Salamon index, which we will use:

\begin{proposition}\label{proposition RS index property}
Let $\Gamma \in \mathcal{S}(2n)$ and $\Theta \, \colon S^1 \to \mathrm{Sp}(2n)$ a loop. Then
$$\mu_{\mathrm{RS}}(\Theta \cdot \Gamma)=\mu_{\mathrm{RS}}(\Gamma)+2\mu_{\mathrm{Maslov}}(\Theta).$$
In particular, whenever $\Theta$ is homotopic to the constant loop $\mathrm{id}$, it holds
$$\mu_{\mathrm{RS}}(\Theta \cdot \Gamma)=\mu_{\mathrm{RS}}(\Gamma).$$
\end{proposition}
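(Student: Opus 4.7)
The plan is to reduce the identity to a catenation computation via a reparametrization homotopy, and then to identify the Robbin--Salamon index of the loop $\Theta$, viewed as a symplectic path from $\mathrm{id}$ to $\mathrm{id}$, with $2 \mu_{\mathrm{Maslov}}(\Theta)$.

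First, I would construct a reparametrization homotopy between the pointwise product $t \mapsto \Theta(t)\Gamma(t)$ and the concatenation $\Theta * \Gamma$ (the loop $\Theta$ traversed on $[0, 1/2]$ followed by $\Gamma$ on $[1/2, 1]$). Explicitly, set $\Psi_s(t) := \Theta(a_s(t)) \, \Gamma(b_s(t))$, where $(a_s, b_s) \colon [0,1] \to [0,1]^2$ is a continuous family of monotone staircase paths from $(0,0)$ to $(1,1)$, interpolating between the diagonal $(t, t)$ at $s=0$ and the L-shape given by $(2t, 0)$ on $[0, 1/2]$ and $(1, 2t - 1)$ on $[1/2, 1]$ at $s=1$. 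Since $\Theta(0) = \Theta(1) = \mathrm{id}$, the endpoints $\Psi_s(0) = \mathrm{id}$ and $\Psi_s(1) = \Gamma(1)$ are preserved throughout. Homotopy invariance of $\mu_{\mathrm{RS}}$ for Lagrangian paths with fixed endpoints, applied to the graphs $\mathrm{gr}(\Psi_s)$ in $(\R^{4n}, -\omega \oplus \omega)$, then yields
$$\mu_{\mathrm{RS}}(\Theta \cdot \Gamma) = \mu_{\mathrm{RS}}(\Theta * \Gamma).$$

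Second, the catenation axiom of $\mu_{\mathrm{RS}}$ (see \cite{rs1993}) splits this as $\mu_{\mathrm{RS}}(\Theta * \Gamma) = \mu_{\mathrm{RS}}(\Theta) + \mu_{\mathrm{RS}}(\Gamma)$. It remains to establish $\mu_{\mathrm{RS}}(\Theta) = 2 \mu_{\mathrm{Maslov}}(\Theta)$ for any symplectic loop based at $\mathrm{id}$. Both quantities are homotopy invariants of $\Theta$ and hence descend to group homomorphisms $\pi_1(\mathrm{Sp}(2n)) \cong \Z \to \Z$, where additivity of $\mu_{\mathrm{RS}}$ on loops follows by reapplying the same concatenation argument used above. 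It therefore suffices to check the identity on a single generator, where a direct computation via the crossing form yields the factor $2$.

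I expect the main obstacle to be tracking the factor-of-two normalization in this last step: it originates from the graph embedding $A \mapsto \mathrm{gr}(A)$ of $\mathrm{Sp}(2n)$ into the Lagrangian Grassmannian of $(\R^{4n}, -\omega \oplus \omega)$, which doubles $\pi_1$-classes. Once this is verified, the concluding assertion of the proposition --- that $\mu_{\mathrm{RS}}(\Theta \cdot \Gamma) = \mu_{\mathrm{RS}}(\Gamma)$ whenever $\Theta$ is null-homotopic to $\mathrm{id}$ --- follows immediately, since $\mu_{\mathrm{Maslov}}$ vanishes on null-homotopic loops.
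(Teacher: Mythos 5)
The paper offers no proof of this proposition --- it is quoted as a standard property of the Robbin--Salamon index --- so there is nothing to compare against, but your argument is correct and is the standard one: the staircase homotopy with fixed endpoints reduces the pointwise product to the concatenation $\Theta * \Gamma$, the catenation axiom splits the index, and the loop identity $\mu_{\mathrm{RS}}(\Theta)=2\mu_{\mathrm{Maslov}}(\Theta)$ follows by checking both homomorphisms on the generator of $\pi_1(\mathrm{Sp}(2n))\cong\Z$. The deferred factor-of-two computation does check out: for $\Theta(t)=e^{2\pi i t}$ in $\mathrm{Sp}(2)$ the crossings with $\Delta$ occur at $t=0,1$ with positive definite crossing form on a $2$-dimensional kernel, each endpoint contributing $\tfrac{1}{2}\cdot 2=1$, so $\mu_{\mathrm{RS}}=2=2\mu_{\mathrm{Maslov}}$.
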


Now for $T$-periodic Hamiltonian orbits $x$ of $F \, \colon T^*Q \to \R$ we define 

$$\mu_{\mathrm{RS}}(x)\, :=\mu_{\mathrm{RS}}(\Gamma_x), \quad \Gamma_x(t)=\Phi_x(t)^{-1} \circ d\varphi_H^t(x(0)) \circ \Phi_x(0),  \; t \in [0,T],$$
where
$$\Phi_x \, \colon \R / T \Z \times \R^{2n} \longrightarrow x^*T(T^*Q)$$
is any vertically preserving symplectic trivialization of $x$. The argument in \cite[ Lemma 1.3]{as2006} carries over to the Robbin--Salamon index and shows that the choice of vertically preserving symplectic trivialization does not affect the index, hence $\mu_{\text{RS}}(x)$ as above is well defined.

To apply the results of GGP we will view every $1$-periodic Hamiltonian orbit $x = (q, p) \in \mathcal{P}(F)$ as a periodic Reeb/Hamiltonian
orbit $x_{1/\sqrt{a}}$ on $\Sigma$ (here $a = F(x)$, see Section \ref{section Reeb Orbits on Starshaped Domains}). We show that this shift does not affect the
Robbin--Salamon index:

Let $x \in \mathcal{P}(F)$. For every $s>0$, the orbit $x_s(t)=(q(st), s \cdot p(st))$ is a $1/s$-periodic Hamiltonian orbit of $F$; cf.\ Proposition \ref{proposition 1-per somewhere is same as per but on sigma}. Define the map

$$c_s \, \colon T^*Q \longrightarrow T^*Q, \quad (q,p) \mapsto (q, s\cdot p).$$
For $x_s$ we define the vertical trivialisation
$$\Phi_{x_s} \, \colon \R / s^{-1} \Z \times \R^{2n} \longrightarrow (x_s)^*T(T^*Q), \quad \Phi_{x_s}(t,\cdot)\, :=dc_s(x(st)) \circ \Phi_x(st,\cdot).$$
In particular, $\mu_{\text{RS}}(x_s)$ can be computed as the Robbin--Salamon index of the path $\Gamma_{x_s}$ defined as
$$\Gamma_{x_s} \, \colon [0,1/s] \longrightarrow \mathrm{Sp}(2n), \quad \Gamma_{x_s}(t)\, :=\Phi_{x_s}(t)^{-1} \circ d \varphi_F^t(x_s(0)) \circ \Phi_{x_s}(0).$$

\begin{proposition}\label{proposition p-shift does not change index v2}
For all $s>0$, $x \in \mathcal{P}(F)$ and $\Gamma_x$, $\Gamma_{x_s}$ as above, it holds:

$$\Gamma_{x_s}(t)=\Gamma_x(st), \quad \forall t \in \R / s^{-1} \Z.$$
In particular,
$$\mu_{\mathrm{RS}}(x_s)=\mu_{\mathrm{RS}}(x), \quad \forall s >0.$$
\end{proposition}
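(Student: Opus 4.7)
The plan is to exploit the two-homogeneity of $F$ to derive a scaling symmetry of its Hamiltonian flow that then intertwines perfectly with the trivialisation $\Phi_{x_s}$. Differentiating $F(q,sp)=s^2F(q,p)$ yields $\partial_p F(q,sp)=s\,\partial_p F(q,p)$ and $\partial_q F(q,sp)=s^2\,\partial_q F(q,p)$. A direct check using these identities shows that $t\mapsto c_s(\varphi_F^{st}(y_0))$ solves the Hamiltonian equations of $F$ with initial condition $c_s(y_0)$, whence the conjugation formula
$$\varphi_F^{t}\circ c_s \;=\; c_s\circ \varphi_F^{st}$$
holds on $T^*Q$ for all relevant $s,t$. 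This is essentially the same scaling reasoning that already underlies Proposition \ref{proposition 1-per somewhere is same as per but on sigma}.

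Next, I differentiate this conjugation identity at $x(0)$ to obtain
$$d\varphi_F^{t}(x_s(0))\circ dc_s(x(0)) \;=\; dc_s(x(st))\circ d\varphi_F^{st}(x(0)).$$
Plugging the definition $\Phi_{x_s}(t,\cdot)=dc_s(x(st))\circ\Phi_x(st,\cdot)$ into
$$\Gamma_{x_s}(t)\,=\,\Phi_{x_s}(t)^{-1}\circ d\varphi_F^{t}(x_s(0))\circ\Phi_{x_s}(0),$$
the two inserted $dc_s$-factors cancel against each other thanks to the intertwining identity, and one is left with
$$\Gamma_{x_s}(t) \;=\; \Phi_x(st)^{-1}\circ d\varphi_F^{st}(x(0))\circ\Phi_x(0) \;=\; \Gamma_x(st),$$
which is the first claim. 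Since $t\mapsto st$ is an orientation-preserving diffeomorphism from $[0,1/s]$ onto $[0,1]$, $\Gamma_{x_s}$ and $\Gamma_x$ traverse the very same path in $\mathrm{Sp}(2n)$, and invariance of the Robbin--Salamon index under such reparametrisations (a standard consequence of its definition via crossing forms) immediately gives $\mu_{\mathrm{RS}}(x_s)=\mu_{\mathrm{RS}}(x)$.

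The one place where some care is warranted is in verifying that $\Gamma_{x_s}$ really lands in $\mathrm{Sp}(2n)$ so that the Robbin--Salamon index is defined in the first place. Because $c_s^*\omega=s\,\omega$, the trivialisation $\Phi_{x_s}$ is only \emph{conformally} symplectic with conformal factor $s$, rather than symplectic. However, the conformal factor from $\Phi_{x_s}(0)$ cancels against the reciprocal factor produced by $\Phi_{x_s}(t)^{-1}$, and since $d\varphi_F^{t}$ is symplectic the composition $\Gamma_{x_s}$ is symplectic. This is the only technical subtlety; everything else in the proof is a formal substitution driven by the homogeneity of $F$.
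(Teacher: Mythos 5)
Your proof is correct and follows essentially the same route as the paper: establish the conjugation identity $c_{1/s}\circ\varphi_F^t\circ c_s=\varphi_F^{st}$, differentiate it along the orbit, and cancel the two $dc_s$-factors against the trivialisation $\Phi_{x_s}$ to get $\Gamma_{x_s}(t)=\Gamma_x(st)$. The only differences are cosmetic: you derive the conjugation identity from $2$-homogeneity and note explicitly that the conformal factor $s$ in $dc_s$ cancels so that $\Gamma_{x_s}$ is genuinely symplectic, both of which the paper leaves implicit.
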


\begin{proof}
The first identity follows from the definitions and the relation
$$c_{1/s} \circ \varphi_F^t \circ c_s=\varphi_F^{st}.$$
Indeed, $c_{1/s}$ is the inverse of $c_s$ and hence with the chain rule we are able to compute
\begin{align*}
\Gamma_{x_s}(t)&=\Phi_{x_s}(t)^{-1} \circ d\varphi_F^t(x_s(0)) \circ \Phi_{x_s}(0) \\
&=\left(\Phi_{x}(st)^{-1}\circ dc_s(x(st)) ^{-1}\right) \circ d\varphi_F^t(x(0)) \circ \left( dc_s(x(0)) \circ \Phi_x(0) \right) \\
&=\Phi_x(st)^{-1} \circ d\left(c_{1/s} \circ \varphi_F^t \circ c_s\right)(x(0)) \circ \Phi_x(0) \\
&=\Phi_x(st)^{-1}\circ d\varphi_F^{st}(x(0)) \circ \Phi_x(0) \\
&=\Gamma_x(st).
\end{align*}

The last assertion follows from the first identity:
$$\mu_{\mathrm{RS}}(x)=\mu_{\mathrm{RS}}(\Gamma_x)=\mu_{\mathrm{RS}}(\Gamma_{x_s})=\mu_{\mathrm{RS}}(x_s).$$
\end{proof}

Proposition \ref{proposition p-shift does not change index v2} tells us that viewing $1$-periodic orbits $y$ of $F$ on $\Sigma$ does not affect the Robbin--Salamon index, in other words, the $p$-shift does not affect the index. 

Note that $z(t)\, :=x(t+s)$ is also a $T$-periodic Hamiltonian orbit of $F$ if $x$ is. Similar to the $p$-shift index invariance, we claim that the time-shift does not affect the Robbin--Salamon index. This is the content of the Proposition \ref{proposition time-shift does not change index} below).

\begin{proposition}\label{proposition time-shift does not change index}
For $x(t)$, $z(t)=x(t+s)$, and $s \in [0,T]$ as above, we have
$$\mu_{\mathrm{RS}}(\Gamma_z^1)=\mu_{\mathrm{RS}}(\Gamma^2_z).$$
In particular
$$\mu_{\mathrm{RS}}(x)=\mu_{\mathrm{RS}}(z).$$
\end{proposition}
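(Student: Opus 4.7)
The plan is to mimic the proof of Proposition \ref{proposition p-shift does not change index v2}, the key being to make a canonical choice of trivialization for $z^*T(T^*Q)$ induced from the given trivialization $\Phi_x$ of $x^*T(T^*Q)$, and reduce everything to an identity between the two symplectic paths. Concretely, I would set
\[
\Phi_z^2(t) := \Phi_x(t+s), \quad t \in \R/T\Z,
\]
which is again vertically preserving since $\Phi_x$ is, and let $\Phi_z^1$ denote any other vertically preserving symplectic trivialization of $z^*T(T^*Q)$. The corresponding Robbin--Salamon representatives are $\Gamma_z^1$ and $\Gamma_z^2$.

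The first claim $\mu_{\mathrm{RS}}(\Gamma_z^1)=\mu_{\mathrm{RS}}(\Gamma_z^2)$ follows from Proposition \ref{proposition RS index property}: the change-of-trivialization loop $\Theta(t) := \Phi_z^1(t)^{-1}\circ \Phi_z^2(t)$ takes values in the subgroup of $\mathrm{Sp}(2n)$ preserving the vertical Lagrangian, which is contractible. Hence $\mu_{\mathrm{Maslov}}(\Theta)=0$ and the two indices agree, exactly as in \cite[Lemma 1.3]{as2006} adapted to $\mu_{\mathrm{RS}}$.

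Next I compute $\Gamma_z^2$ explicitly. Using $z(0)=x(s)$, the autonomy of $F$ (so that $\varphi_F^t$ commutes with itself), and the chain-rule identity $d\varphi_F^t(x(s)) = d\varphi_F^{t+s}(x(0))\circ [d\varphi_F^s(x(0))]^{-1}$, exactly the same computation as in Proposition \ref{proposition p-shift does not change index v2} yields
\[
\Gamma_z^2(t) = \Gamma_x(t+s)\cdot \Gamma_x(s)^{-1}, \quad t \in [0,T],
\]
where $\Gamma_x$ is extended to $\R$ through the relation $\Gamma_x(t+T)=\Gamma_x(t)\Gamma_x(T)$ that is forced by the $T$-periodicity of $x$ and of $\Phi_x$. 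In particular $\Gamma_z^2(T) = \Gamma_x(s)\Gamma_x(T)\Gamma_x(s)^{-1}$, which is conjugate to $\Gamma_x(T)$ and hence in the same stratum of the Maslov cycle.

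The remaining step, showing $\mu_{\mathrm{RS}}(\Gamma_x)=\mu_{\mathrm{RS}}(\Gamma_z^2)$, is the main obstacle. The plan is to interpolate through the family
\[
\Gamma^u(t) := \Gamma_x(t+us)\cdot \Gamma_x(us)^{-1}, \quad u \in [0,1], \; t\in[0,T],
\]
which connects $\Gamma^0=\Gamma_x$ to $\Gamma^1=\Gamma_z^2$, each $\Gamma^u$ starting at the identity. The endpoints $\Gamma^u(T)=\Gamma_x(us)\Gamma_x(T)\Gamma_x(us)^{-1}$ all lie in the conjugacy class of $\Gamma_x(T)$, so in particular the eigenvalue multiplicity structure at $1$ is preserved throughout. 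By the axiomatic properties of the Robbin--Salamon index \cite{rs1993} --- specifically homotopy invariance for families whose endpoints remain in a fixed Maslov stratum, combined with the fact that crossing-form contributions at a conjugated endpoint cancel --- one concludes that $\mu_{\mathrm{RS}}(\Gamma^u)$ is constant in $u$, giving $\mu_{\mathrm{RS}}(\Gamma_x)=\mu_{\mathrm{RS}}(\Gamma_z^2)$. Combining the three steps yields $\mu_{\mathrm{RS}}(x)=\mu_{\mathrm{RS}}(z)$. The delicate point is precisely this stratum-preserving homotopy invariance: if it is inconvenient to appeal to directly, one can instead split $\Gamma_x = \alpha * \beta$ at time $s$ and identify $\Gamma_z^2$ with the "cyclically permuted" concatenation $(\beta \cdot \Gamma_x(s)^{-1})*(\alpha \cdot \Gamma_x(T)\Gamma_x(s)^{-1})$, then compare crossing forms of the two concatenations segment by segment using the conjugation-covariance $\mathrm{gr}(A\Gamma A^{-1}) = (A\times A)\cdot\mathrm{gr}(\Gamma)$ and the naturality of $\mu_{\mathrm{RS}}$ under linear symplectic maps.
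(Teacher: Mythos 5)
Your proof is correct, but it takes a genuinely different route from the paper's. The paper introduces a \emph{second}, non-vertically-preserving trivialization of $z^*T(T^*Q)$, namely the flow push-forward $\Psi_z^2(t)=d\varphi_F^s(x(t))\circ\Phi_x(t)$, for which the associated path is literally $\Gamma_x$; it then compares this with the time-shifted vertically preserving trivialization by contracting the change-of-trivialization ``loop'' $\Theta(t)=\Gamma_x(t)\,\Gamma_x(s)\,\Gamma_x(t+s)^{-1}$ to the constant via the homotopy $s\to 0$ and Proposition \ref{proposition RS index property}. You instead stay entirely within vertically preserving trivializations, derive the clean formula $\Gamma_z(t)=\Gamma_x(t+s)\Gamma_x(s)^{-1}$ (which is exactly the paper's $\Gamma_z^1$), and compare it to $\Gamma_x$ through the family $\Gamma^u(t)=\Gamma_x(t+us)\Gamma_x(us)^{-1}$ using the catenation, fixed-endpoint homotopy, and ``zero on a stratum'' axioms of \cite{rs1993}; since $\Gamma^u(T)$ moves in a single conjugacy class, the connecting path at $t=T$ has constant-dimensional intersection of its graph with $\Delta$ and contributes nothing. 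Both arguments hinge on a homotopy in the shift parameter, but yours buys something concrete: the paper's $\Theta$ satisfies $\Theta(T)=\Gamma_x(T)\Gamma_x(s)\Gamma_x(T)^{-1}\Gamma_x(s)^{-1}$, a commutator that is not the identity in general, so strictly speaking $\Theta$ is only a loop up to the stratum-preserving wiggle at the endpoint --- your stratum-homotopy formulation makes this point explicit rather than implicit. Two small corrections. First, the stabilizer of a Lagrangian in $\mathrm{Sp}(2n)$ is \emph{not} contractible (it deformation retracts onto $O(n)$); what is true, and what \cite[Lemma 1.3]{as2006} actually uses, is that the Maslov class vanishes on loops in this stabilizer, so your conclusion $\mu_{\mathrm{Maslov}}(\Theta)=0$ stands with the corrected justification. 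Second, your reading of $\Gamma_z^1$ versus $\Gamma_z^2$ (two vertically preserving trivializations) differs from the paper's (one vertically preserving, one flow-induced), which makes your ``first claim'' a restatement of the already-established well-definedness; since the paper never defines these symbols before the statement this is forgivable, and the substantive conclusion $\mu_{\mathrm{RS}}(x)=\mu_{\mathrm{RS}}(z)$ is fully proved by your remaining steps.
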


\begin{proof}
The idea is to use Proposition \ref{proposition RS index property} with the loop
$$\Theta(t)\colon=\Gamma_z^2(t)\cdot \Gamma_z^1(t)^{-1}, \quad t \in \R / T\Z.$$
Observe that $\Theta$ is a loop based at the identity. Moreover
\begin{align*}
\Theta(t)&=\left[\Psi_z^2(t)^{-1} \circ d\varphi_F^t(z(0)) \circ \Psi_z^2(0) \right]\circ \left(\Psi_z^1(0)^{-1} \circ d\varphi_F^t(z(0))^{-1} \circ \Psi_z^1(t)^{-1}\right) \\
&=\left[\Phi_x(t)^{-1} \circ d\varphi_F^s(x(t))^{-1}\circ d\varphi_F^t(x(s)) \circ d\varphi_F^s(x(0)) \circ \Phi_x(0) \right] \circ \\
& \quad \quad \circ \Phi_x(s)^{-1} \circ d\varphi_F^t(x(s))^{-1} \circ \Phi_x(t+s) \\
&=\left[\Phi_x(t)^{-1} \circ d\varphi_F^{-s}(x(t+s)) \circ d\varphi_F^{t+s}(x(0)) \circ \Phi_x(0) \right] \circ \\
& \quad \quad \circ \Phi_x(s)^{-1} \circ d\varphi_F^t(x(s))^{-1} \circ \Phi_x(t+s) \\
&=\left[\Phi_x(t)^{-1} \circ d\varphi_F^t(x(0)) \circ \Phi_x(0) \right] \circ \Phi_x(s)^{-1} \circ d\varphi_F^t(x(s))^{-1} \circ \Phi_x(t+s). \\
\end{align*}
Setting $s=0$ in the expression above gives
$$\Phi_x(t)^{-1} \circ \Phi_x(t)=\mathrm{id},$$
which then readily proves that $\Theta$ is homotopic to the constant loop $\mathrm{id}$. In particular, using Proposition \ref{proposition RS index property}, we get:
$$\mu_{\mathrm{RS}}(\Gamma_z^2)=\mu_{\mathrm{RS}}(\Theta \cdot \Gamma_z^1)=\mu_{\mathrm{RS}}(\Gamma^1_z).$$
At the same time, the above computation (see square brackets!) also proves $\Gamma_z^2(t)=\Gamma_x(t)$. In particular
$$\mu_{\mathrm{RS}}(z)=\mu_{\mathrm{RS}}(\Gamma_z^1)=\mu_{\mathrm{RS}}(\Gamma_z^2)=\mu_{\mathrm{RS}}(\Gamma_x)=\mu_{\mathrm{RS}}(x).$$
\end{proof}

Instead of using the Robbin--Salamon index to extend $\mu_{\mathrm{CZ}}$ to the whole $\mathcal{S}(2n)$, we could have used upper/lower semicontinuous extensions $\mu_{\mathrm{CZ}}^{\pm}$. For paths $\Gamma \in \mathcal{S}^*(2n)$ all three notions agree. On the whole $\mathcal{S}(2n)$ the following relations are well known
\begin{align*}
\mu^+_{\mathrm{CZ}}(x)+\mu^-_{\mathrm{CZ}}(x)&=2 \cdot \mu_{\mathrm{RS}}(x) \\
\mu^+_{\mathrm{CZ}}(x)&=\mu^-_{\mathrm{CZ}}(x)+\nu(x),
\end{align*}
where $\nu(x)$ is the nullity i.e.\ the geometric multiplicity of the eigenvalue $1$ of the matrix $\Gamma_x(1)$:
$$\nu(x)=\dim \ker(\Gamma_x(1)-\mathrm{id})=\dim \ker (d  \varphi_F^1(x(0)) - \mathrm{id}),$$
 see \cite[Section 3.1]{am2017} for more details. Combining these two gives

$$\mu_{\mathrm{RS}}(x)=\mu^+_{\mathrm{CZ}}(x)-\frac{\nu(x)}{2}= \mu^-_{\mathrm{CZ}}(x)+\frac{\nu(x)}{2}.$$

With this we can bound the Robbin--Salamon index of the relevant carrier from above and below:

\begin{proposition}\label{proposition index bound from below and above}
Let $\alpha \in H_i(\mathcal{L}Q) \setminus\{0\}$ and $y$ a carrier of $c_\alpha(K)$. Then
$$i-\frac{\nu(y)}{2} \leq \mu_{\mathrm{RS}}(y) \leq i+\frac{\nu(y)}{2}.$$
\end{proposition}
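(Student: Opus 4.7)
Using the identities $\mu_{\mathrm{RS}}(y) = \mu^+_{\mathrm{CZ}}(y) - \nu(y)/2 = \mu^-_{\mathrm{CZ}}(y) + \nu(y)/2$ recorded just before the proposition, the claim is equivalent to
$$\mu^-_{\mathrm{CZ}}(y) \leq i \leq \mu^+_{\mathrm{CZ}}(y).$$
The plan is to realize $y$ as a limit of non-degenerate carriers whose Conley--Zehnder index is exactly $i$, and then pass to the limit via semicontinuity of the two extensions $\mu^\pm_{\mathrm{CZ}}$.

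Take the non-degenerate approximants $K_n = K + W_n$ from Section \ref{section Pinching} and, via Lemma \ref{lemma spectrality non-degenerate case}, pick carriers $y_n \in \mathcal{P}(K_n)$ with $c_\alpha(K_n) = \mathcal{A}_{K_n}(y_n)$. The Abbondandolo--Majer isomorphism $\Upsilon$ and the Abbondandolo--Schwarz isomorphism $\Theta$ both preserve degree, so $\alpha \in H_i(\mathcal{L}Q)$ is sent to a non-zero class in $\mathrm{HF}_i(K_n)$, and any Floer cycle whose top-action generator realizes $c_\alpha(K_n)$ is supported in $\mathrm{CF}_i(K_n)$. In particular the carrier $y_n$, being one such top-action generator, must satisfy $\mu_{\mathrm{CZ}}(y_n) = i$ for every $n$. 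Next, I replay the compactness argument from the proof of Proposition \ref{proposition spectral invariant for degenerate Hamiltonians }: Arzel\`a--Ascoli together with the $C^2$-convergence $K_n \to K$ yields, up to a subsequence, $y_n \to y$ in $C^0$, and $y$ is a carrier of $c_\alpha(K)$. Since the orbits $y_n$ and $y$ all lie in the same free homotopy class on $Q$, I can pick vertically preserving symplectic trivializations $\Phi_{y_n}$ converging to $\Phi_y$; combined with continuous dependence of the linearized Hamiltonian flow on $(K_n, y_n(0))$ this produces $\Gamma_{y_n} \to \Gamma_y$ uniformly in $\mathrm{Sp}(2n)$.

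Finally I invoke the semicontinuity of $\mu^\pm_{\mathrm{CZ}}$ on the space of symplectic paths based at the identity ($\mu^+$ is upper and $\mu^-$ lower semicontinuous). Since $y_n$ is non-degenerate, both extensions coincide with $\mu_{\mathrm{CZ}}(y_n) = i$, hence
$$\mu^-_{\mathrm{CZ}}(y) \leq \liminf_n \mu^-_{\mathrm{CZ}}(y_n) = i = \limsup_n \mu^+_{\mathrm{CZ}}(y_n) \leq \mu^+_{\mathrm{CZ}}(y),$$
which combined with the opening identities delivers the desired inequalities. The delicate point is really the degree bookkeeping in the second paragraph: one must be certain that chasing $\alpha$ through $\Upsilon$ and $\Theta$ and then through the spectral infimum genuinely lands $y_n$ in $\mathrm{CF}_i(K_n)$ and not in some neighbouring degree introduced by a chain homotopy. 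This is fine because $c_\alpha(K_n)$ is defined as an infimum over representatives of $\alpha$ in homological degree $i$, so every competing cycle — in particular the minimal one whose top-action generator is $y_n$ — is automatically supported in degree $i$.
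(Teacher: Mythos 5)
Your argument is correct and follows essentially the same route as the paper's proof: realize the carrier $y$ as a $C^0$-limit of non-degenerate carriers $y_n$ of $c_\alpha(K_n)$ with $\mu_{\mathrm{CZ}}(y_n)=i$, use upper/lower semicontinuity of $\mu^{\pm}_{\mathrm{CZ}}$ to get $\mu^-_{\mathrm{CZ}}(y)\leq i\leq \mu^+_{\mathrm{CZ}}(y)$, and conclude via the identities $\mu_{\mathrm{RS}}=\mu^{\pm}_{\mathrm{CZ}}\mp\nu/2$. You simply spell out in more detail the degree bookkeeping and the convergence of the linearized flows, which the paper leaves implicit by referring back to the proof of Proposition \ref{proposition spectral invariant for degenerate Hamiltonians }.
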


\begin{proof}
Indeed, we have seen in the proof of Proposition \ref{proposition spectral invariant for degenerate Hamiltonians }  that $y$ is the limit of genuine non-degenerate orbits of Conley--Zehnder (and thus Robbin--Salamon) index $i$. Hence, by definition of $\mu^{\pm}_{\mathrm{CZ}}$:
$$\mu^+_{\mathrm{CZ}}(y) \geq i, \quad \mu^-_{\mathrm{CZ}}(y) \leq i.$$ This together with the identities above the proposition grants the desired bound.
\end{proof}

For $T$-periodic Reeb orbits $y$ of $\Sigma$ one defines the nullity of $y$ as follows:
$$\nu(y)=\dim\ker(d\varphi_R^T(y(0))-\mathrm{id}).$$

\begin{notation}
We adopt the following notation: if $x \in \mathcal{P}(F)$, then $\bar{x}$ denotes the corresponding $p$-shifted periodic Hamiltonian orbit of $F$ on $\Sigma$. The nullity $\nu(\bar{x})$ is understood to be the Reeb nullity as defined above --- this is not really abusive due to the next result.
\end{notation}

\begin{proposition}\label{proposition index inequality for reeb orbits}
Let $x \in \mathcal{P}(F)$. Then
$$\mu_{\mathrm{RS}}(\bar{x})=\mu_{\mathrm{RS}}(x) \text{ and } \nu(\bar{x})=\nu(x).$$
In particular, if $x$ is a carrier of $c_{\alpha}(K)$ as in Proposition \ref{proposition index bound from below and above}, then
$$i-\frac{\nu(\bar{x})}{2} \leq \mu_{\mathrm{RS}}(\bar{x}) \leq i+\frac{\nu(\bar{x})}{2}.$$
Moreover, the Reeb flow action on $\bar{x}$ does not affect the Robbin--Salamon index.
\end{proposition}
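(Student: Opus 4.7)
The plan is to split the proposition into four claims and tackle each in turn: (a) the identity $\mu_{\mathrm{RS}}(\bar{x}) = \mu_{\mathrm{RS}}(x)$, (b) the identity $\nu(\bar{x}) = \nu(x)$, (c) the ``in particular'' bound, and (d) Reeb-flow invariance of the Robbin--Salamon index. Claim (a) is immediate: since $\bar{x} = x_{1/\sqrt{a}}$ with $a = F(x)$ by Proposition \ref{proposition 1-per somewhere is same as per but on sigma}, Proposition \ref{proposition p-shift does not change index v2} applied at $s = 1/\sqrt{a}$ gives the result. The same conjugacy $\Gamma_{x_s}(t) = \Gamma_x(st)$ also identifies the Hamiltonian-flow nullity of $\bar{x}$ on $T_{\bar{x}(0)}T^*Q$ with $\nu(x)$; so all that remains for (b) is to show that this equals the Reeb-flow nullity $\nu(\bar{x})$, which is defined on the $(2n-1)$-dimensional subspace $T_{\bar{x}(0)}\Sigma$.

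The heart of (b) is the comparison of these two linearized flows. I will work in the splitting $T_{\bar{x}(0)}T^*Q = \langle X_F \rangle \oplus \xi_{\bar{x}(0)} \oplus \langle Y \rangle$, where $Y$ is the Liouville vector field (transverse to $\Sigma$ by the starshaped assumption). Three observations pin down the block structure of $d\varphi_F^{\sqrt{a}}(\bar{x}(0))$ in this basis. First, $X_F(\bar{x}(0))$ is a fixed vector (flow direction). Second, by the 2-homogeneity of $F$ we have $\lambda(X_F) = 2F$ and hence $\mathcal{L}_{X_F}\lambda = dF$, which vanishes on $T\Sigma$; so the Hamiltonian flow on $\Sigma$ preserves $\alpha_\Sigma$, and thus the contact distribution $\xi$, i.e.\ $d\varphi_F^{\sqrt{a}}$ preserves $\xi_{\bar{x}(0)}$. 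Third, differentiating the identity $\varphi_F^{\sqrt{a}}(c_s\bar{x}(0)) = c_s\bar{x}(s\sqrt{a})$ (a consequence of $\varphi_F^t \circ c_s = c_s \circ \varphi_F^{st}$) in $s$ at $s=1$ yields
$$d\varphi_F^{\sqrt{a}}(\bar{x}(0)) \cdot Y(\bar{x}(0)) = Y(\bar{x}(0)) + \sqrt{a}\, X_F(\bar{x}(0)).$$

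In the chosen basis the linearization therefore has the form
$$\begin{pmatrix} 1 & 0 & \sqrt{a} \\ 0 & P & 0 \\ 0 & 0 & 1 \end{pmatrix},$$
where $P$ is the linearized Poincar\'e return map on $\xi$. A direct kernel computation for this matrix minus the identity gives dimension $1 + \dim\ker(P-\mathrm{id})$; crucially, the positivity $\sqrt{a} > 0$ forces the Liouville component of any null vector to vanish. On the other hand, the Reeb linearization on $T_{\bar{x}(0)}\Sigma = \langle X_F \rangle \oplus \xi$ is block diagonal $\mathrm{diag}(1, P)$ with the same $P$ (the Reeb and Hamiltonian flows agree on $\Sigma$ up to the time rescaling $\varphi_F^t|_\Sigma = \varphi_R^{2t}|_\Sigma$), so its kernel minus identity also has dimension $1 + \dim\ker(P-\mathrm{id})$. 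This proves (b).

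Claim (c) follows immediately from Proposition \ref{proposition index bound from below and above} by substituting the two identities above. For (d), note that since $\varphi_R^s|_\Sigma = \varphi_F^{s/2}|_\Sigma$, a Reeb-flow shift of $\bar{x}$ is just a time shift in the Hamiltonian parameterization, so Proposition \ref{proposition time-shift does not change index} applies verbatim. I expect the main technical obstacle to be the third observation above --- establishing the precise identity $d\varphi_F^{\sqrt{a}} \cdot Y = Y + \sqrt{a}\, X_F$. It is exactly the nonvanishing of the top-right block entry that prevents the Hamiltonian and Reeb nullities from differing by one.
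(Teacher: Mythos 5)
Your proof is correct and follows the same route as the paper: the index identity via Proposition \ref{proposition p-shift does not change index v2}, the ``in particular'' bound via Proposition \ref{proposition index bound from below and above}, and the Reeb-flow invariance via Proposition \ref{proposition time-shift does not change index}. For the nullity identity the paper merely invokes the $2$-homogeneity of $F$ and ``a standard computation of $d\varphi_F^t$'' with a citation to Bourgeois--Oancea; your block decomposition of $T_{\bar{x}(0)}T^*Q$ with the shear $d\varphi_F^{\sqrt{a}}Y = Y + \sqrt{a}\,X_F$ is exactly that computation, carried out correctly.
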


\begin{proof}
The first equation holds true because of Proposition \ref{proposition p-shift does not change index v2}, whereas the nullity equality follows from $F$ being $2$-homogeneous and a standard computation of $d\varphi_F^t$, see \cite[Lemma 3.3]{bo2009} for more details. The inequality now follows from Proposition \ref{proposition index bound from below and above}. The Reeb flow action on $\bar{y}$ is just a time-shift, which again does not affect the index; cf.\ Proposition \ref{proposition time-shift does not change index}.
\end{proof}

\begin{remark}\label{remark ggp and rs index agree}
The Conley--Zehnder index defined in \cite{ggp2008} agrees with the Robbin--Salamon index above; cf.\ \cite[Proposition 9]{gosson2009} and \cite[formula (39)]{gosson2009}.
\end{remark}

\section{Main Theorem}\label{section main theorem}

\subsection{Preliminaries for the Main Theorem and Examples}

Let $Q$ be a $n$-dimensional closed and connected manifold whose first Betti number does not vanish. Then there exists a homotopy class $\eta \in \pi_1(Q)$ whose image in $H_1(Q;\Z)$ is of infinite order. In particular, $\eta$ itself is of infinite order and the conjugacy classes of each iterate $\eta^m$ are distinct. Denote by $\mathcal{L}_mQ$ the connected component corresponding to the conjugacy class of $\eta^m$.

\begin{remark}
The choice of $\eta$ above implies that $\mathcal{L}_{k}Q \neq  \mathcal{L}_{l}Q$ for integers $k \neq l$. It is not clear whether an arbitrary element $\beta \in \pi_1(Q)$ of infinite order has the same implication. A similar issue regarding conjugacy classes is addressed in \cite[page 4]{fs2004}. The same Betti number assumption is made by Allais in \cite{allais2020}. Note that Allais proves, among other things, a version of Theorem \ref{theorem main section existence} in the case of geodesic chords.
\end{remark}

We fix once and for all a homotopy class $\eta \in \pi_1(Q)$ as above.

\begin{definition}\label{definition S1 non trivial action}
A continuous $S^1$-action $\phi \, \colon S^1 \times Q \to Q$ is called non-trivial with respect to $\beta \in \pi_1(Q,q_0)$, if  $[\phi(\cdot,q_0)]=\beta$.
\end{definition}
Note that any two orbits of $\phi$ have the same free homotopy class since $Q$ is assumed to be connected.

\begin{convention}
Whenever $Q$ admits a homotopy class $\eta$ as described above, we (abusively) call an $S^1$-action $\phi \, \colon S^1 \times Q \to Q$ \emph{non-trivial} if there exists a non-zero integer $k$ such that $\phi$ is non-trivial with respect to $\eta^k$ in the sense of Definition \ref{definition S1 non trivial action}. Note that up to switching $\eta$ and $\eta^{-1}$, we may assume $k> 0$.
\end{convention}

Let $\Sigma$ be a starshaped hypersurface as in the previous sections. We define what it means for $\Sigma$ to be non-degenerate:

\begin{definition}\label{definition non-degenerate hypersurface}
Let $y$ be a $T$-periodic Reeb orbit of $\Sigma$ with $T>0$. Then $y$ is said to be non-degenerate if
$$\det\left(d(\varphi_R^T)\big |_{\xi_{y(0)}}-\mathrm{id}_{\xi_{y(0)}}\right) \neq 0,$$
where the contact structure $\xi$ is defined as in Section \ref{section Reeb Orbits on Starshaped Domains}. A starshaped hypersurface $\Sigma$ is said to be non-degenerate if every non-constant Reeb orbit of $\Sigma$ is non-degenerate.

\end{definition}

\begin{remark}\label{remark non-degeneracy of Sigma}
Non-degeneracy of $\Sigma$ ensures that the nullity of every Reeb orbit $y$ is precisely $1$. In parcticular
$$\nu(x)=\nu(\bar{x})=1, \quad \forall x \in \underline{\mathcal{P}}(F),$$
cf.\ Proposition \ref{proposition index inequality for reeb orbits}. Controlling the nullity will be crucial in the proof of Proposition \ref{proposition iteration distinct} and thus Theorem \ref{theorem main section existence}.

\end{remark}

 \begin{theorem}\label{theorem main section existence}
 Let $Q$ be a $n$-dimensional closed connected manifold with $n \geq 2$ whose first Betti number is non-trivial. Assume that $Q$ admits a non-trivial $S^1$-action.
 Then for any starshaped hypersurface $\Sigma \subseteq T^*Q$, whose closed Reeb orbits have nullity less or equal to $n-1$, it holds:
 $$\mathcal{N}_\Sigma(T) \geq \frac{1}{\log(2n)} \cdot \log\left(aT-1\right) , \quad \forall T >0$$
 for some $a>0$. 

 \end{theorem}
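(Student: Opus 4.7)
The plan is to carry out the outline sketched in the introduction in four steps. First, using the assumed $S^1$-action together with the choice of $\eta$, I construct the classes $\alpha_m=(\mathcal{I}_m)_*\alpha_1\in H_n(\mathcal{L}_{mk}Q)\setminus\{0\}$ exactly as in the introductory diagram. Since $\eta$ has infinite order in $H_1(Q;\Z)$, the classes $\alpha_m$ live in pairwise distinct connected components of $\mathcal{L}Q$. The key upper bound on the spectral invariants is the computation
\[
c_{\alpha_m}(L)\le m^2C, \qquad C:=\max_{q\in Q}\mathcal{E}_L(\gamma_q),
\]
which follows immediately from the definition of the minimax by taking the representative $(\mathcal{I}_m\circ s)_*[Q]$, together with the scaling $\mathcal{E}_L(\mathcal{I}_m(\gamma))=m^2\mathcal{E}_L(\gamma)$ of the kinetic Lagrangian $L$. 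Corollary \ref{corollary - spectral invariants do not depend on d, middle ones have bounded range} then transfers this bound to $c_{\alpha_m}(K)\le m^2C$, independently of the auxiliary parameter $d$.

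Second, I produce carriers on $\Sigma$. For each $m$, choose $d=d(m)$ large enough that $m^2C<d-2c$. Proposition \ref{proposition spectral invariant for degenerate Hamiltonians } yields a carrier $x_m\in\mathcal{P}(K)$ of $c_{\alpha_m}(K)$, and Proposition \ref{proposition precomposing with f does not change orbits} upgrades $x_m$ to a non-constant $1$-periodic orbit of $F$ in the component $\mathcal{L}_{mk}Q$ with $\mathcal{A}_F(x_m)\le m^2C$. Proposition \ref{proposition 1-per somewhere is same as per but on sigma} converts $x_m$ into a closed Reeb orbit $\bar x_m$ on $\Sigma$ of period at most $2\sqrt{C}\,m$.

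Third, and this is the main obstacle, I extract a subsequence of pairwise geometrically distinct Reeb orbits with multiplicative spacing. By Proposition \ref{proposition index inequality for reeb orbits} combined with the nullity hypothesis $\nu(\bar x_m)\le n-1$, every carrier satisfies the uniform Robbin--Salamon bound
\[
\mu_{\mathrm{RS}}(\bar x_m)\in\Bigl[\tfrac{n+1}{2},\tfrac{3n-1}{2}\Bigr],
\]
a window that contains at most $2n$ values in $\tfrac12\Z$. Suppose $\bar x_m=z^p$ and $\bar x_{m'}=z^{p'}$ with $m<m'$ are both iterates of a common simple Reeb orbit $z$. The identity $[z]^p=\eta^{mk}$ in $\pi_1(Q)$, combined with the infinite order of $\eta$ in $H_1(Q;\Z)$, forces $p/m=p'/m'$; thus multiplicities of iterates of a fixed $z$ lie on a single arithmetic progression commensurable with the sequence $m$. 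The Robbin--Salamon iteration theorem of \cite{ggp2008} (compatible with our sign convention by Remark \ref{remark ggp and rs index agree}), together with the nullity bound controlling the error, then forces at most one multiplicity $m\in(M,2nM]$ per equivalence class $[z]$ to appear in our sequence. Greedily picking the smallest $m>m_j$ such that $\bar x_m$ is not geometrically equivalent to any previously selected $\bar x_{m_i}$ therefore produces a subsequence with
\[
m_{j+1}\le 2n\cdot m_j, \qquad j\in\N,
\]
whose members are pairwise geometrically distinct. The technical core is this reconciliation between the additive Robbin--Salamon constraint and the multiplicative commensurability $p'/p=m'/m$.

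Finally, since $m_j\le(2n)^{j-1}m_1$ and $\mathrm{period}(\bar x_{m_j})\le 2\sqrt{C}\,m_j$, the condition $\mathrm{period}(\bar x_{m_j})\le T$ is implied by $(2n)^{j-1}\le T/(2\sqrt{C}\,m_1)$. Solving for $j$ with $a:=1/(2\sqrt{C}\,m_1)$ yields
\[
\mathcal{N}_\Sigma(T)\ge j\ge \frac{1}{\log(2n)}\log(aT-1), \qquad \forall T>0,
\]
which is the claimed logarithmic lower bound.
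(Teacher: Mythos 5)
Your steps 1, 2 and 4 match the paper's argument (the paper uses the slightly sharper bound $c_{\alpha_m}(L)\le m^2c_{\alpha_1}(L)$ from Lusternik--Schnirelmann theory rather than $m^2\max_q\mathcal{E}_L(\gamma_q)$, but either works since $a$ is unspecified). The genuine gap is in step 3, which is precisely the crux of the theorem. Your claim that the GGP iteration inequality ``forces at most one multiplicity $m\in(M,2nM]$ per equivalence class $[z]$'' is asserted, not proved, and it does not follow from \cite[Corollary 4.4]{ggp2008} in the generality you need. That inequality compares $\mu_{\mathrm{RS}}(y^s)$ with $s\cdot\mu_{\mathrm{RS}}(y)$ for an \emph{integer} iterate $s$ of a \emph{given} orbit $y$. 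If $\bar x_m=z^p$ and $\bar x_{m'}=z^{p'}$ with $p'/p=m'/m$ not an integer (equivalently $m\nmid m'$, which your greedy selection does not rule out), then $\bar x_{m'}$ is not an iterate of $\bar x_{m}$, and the only way to compare their indices via GGP is through the underlying simple orbit $z$ or a common iterate $z^{pp'}$. Carrying that out (e.g.\ $\bigl|p'\mu_{\mathrm{RS}}(z^p)-p\,\mu_{\mathrm{RS}}(z^{p'})\bigr|\le\tfrac n2(p+p'-2)$, combined with your index window $\bigl[\tfrac{n+1}{2},\tfrac{3n-1}{2}\bigr]$) yields only $m'/m<4n-1$, not $m'/m<2n$; the error terms proportional to $p$ and $p'$ are exactly what you cannot control for non-divisible pairs. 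So as written the separation claim is unjustified, and the honest version of your argument gives a window of ratio roughly $4n$, hence a constant $1/\log(4n-1)$ rather than the stated $1/\log(2n)$ (still enough for the Main Theorem's $\liminf$, but not for the inequality as stated).

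The paper sidesteps this entirely by only ever comparing $\bar x_m$ with $\bar x_{rm}$ for an integer $r\ge 2n$: by the homotopy-class argument you correctly gave, if these share their image then $\bar x_{rm}$ coincides (up to time shift) with the honest $r$-th iterate $\bar x_m^{\,r}$, so GGP applies directly to the orbit $\bar x_m$ whose index is already pinned near $n$, and the computation $\tfrac{3n-1}{2}\ge\tfrac r2+\tfrac n2$ gives $r<2n$ (Proposition \ref{proposition iteration distinct}). The selected subsequence is then the geometric progression $m=1,r,r^2,\dots,r^{N-1}$ with $r=2n$, so that every pair satisfies the divisibility needed for the direct GGP comparison. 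To repair your proof, replace the greedy selection by this geometric one (or restrict your greedy choice to multiples of the previously chosen $m_j$, which amounts to the same thing).
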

 
\begin{remark}\label{remark S1 action suffices}
The non-trivial action assumption can be weakened to the existence of a section $$s \, \colon Q \longrightarrow \mathcal{L}_kQ,$$ for some $k \neq 0$. Note that the formula $\phi(t,q)\, :=s_k(q)(t)$ does not necessarily define an $S^1$-action. 
\end{remark} 
 
In view of Remark \ref{remark non-degeneracy of Sigma} we obtain the following corollary:

\begin{corollary}\label{corollary Sigma non-degenerate}
Let $Q$ be as in Theorem \ref{theorem main section existence}. Then for any non-degenerate starshaped hypersurface $\Sigma \subseteq T^*Q$ we have
$$\liminf_{T \to \infty} \frac{\mathcal{N}_\Sigma(T)}{\log(T)}>0.$$
\end{corollary}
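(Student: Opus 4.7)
The plan is to observe that Corollary \ref{corollary Sigma non-degenerate} follows almost immediately from Theorem \ref{theorem main section existence}, once we check that the non-degeneracy assumption on $\Sigma$ implies the nullity hypothesis in that theorem.

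First I would invoke Remark \ref{remark non-degeneracy of Sigma}: by definition, non-degeneracy of $\Sigma$ means that for each non-constant Reeb orbit $y$ of period $T$, the restricted linearised return map $d\varphi_R^T|_{\xi_{y(0)}} - \mathrm{id}_{\xi_{y(0)}}$ is invertible. Since the Reeb vector field $R$ itself is always a fixed direction of $d\varphi_R^T$, the full nullity $\nu(y) = \dim \ker(d\varphi_R^T(y(0)) - \mathrm{id})$ equals exactly $1$. Under the assumption $n \geq 2$ we therefore have $\nu(y) = 1 \leq n-1$ for every non-constant Reeb orbit, so $\Sigma$ satisfies the nullity hypothesis required to apply Theorem \ref{theorem main section existence}.

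Next I would apply Theorem \ref{theorem main section existence} directly: there exists $a > 0$ such that
\[
\mathcal{N}_\Sigma(T) \;\geq\; \frac{1}{\log(2n)} \cdot \log(aT - 1), \qquad \forall \, T > 0.
\]
Dividing both sides by $\log(T)$ and passing to the limit inferior gives
\[
\liminf_{T \to \infty} \frac{\mathcal{N}_\Sigma(T)}{\log(T)} \;\geq\; \frac{1}{\log(2n)} \cdot \liminf_{T \to \infty} \frac{\log(aT - 1)}{\log(T)} \;=\; \frac{1}{\log(2n)} \;>\; 0,
\]
where the last equality uses the elementary fact that $\log(aT - 1)/\log(T) \to 1$ as $T \to \infty$ (for any $a > 0$).

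There is no real obstacle here; the whole content of the corollary is the verification that non-degeneracy of $\Sigma$ forces the Reeb nullity to take the minimal possible value $1$, which is automatically dominated by $n-1$ under the standing assumption $n \geq 2$. All the analytic and index-theoretic work has already been carried out in the proof of Theorem \ref{theorem main section existence}.
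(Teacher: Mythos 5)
Your proposal is correct and follows exactly the paper's route: the corollary is obtained by combining Theorem \ref{theorem main section existence} with Remark \ref{remark non-degeneracy of Sigma}, which records that non-degeneracy forces $\nu(y)=1\leq n-1$ for $n\geq 2$. The remaining limit computation $\log(aT-1)/\log(T)\to 1$ is exactly the elementary step the paper leaves implicit.
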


Here is a list of examples that satisfy the hypotheses of Theorem \ref{theorem main section existence}:

\begin{enumerate}[i)]

\item  any product $S^1 \times M$ with $M$ a closed manifold of dimension at least $1$,
\item any principal $S^1$-bundle $Q$ over a closed connected base manifold $B$ with $\pi_2(B)=0$ and $\dim B \geq 1$,
\item any $H$-space $Q$ with $\dim Q \geq 2$ and infinite fundamental group.

\end{enumerate}

The last two examples require some explanation. Ad ii): From the homotopy long exact sequence we deduce that the fiber action $\varphi \, \colon S^1 \to Q$ induces an injection on the fundamental groups and therefore defines a non-trivial $S^1$-action --- see also \cite[Proposition 3.1]{macarini2004}.  Since principal $S^1$-bundles over $B$ are parametrized by $H^2(B;\Z)$  \cite{kobayashi1956}, this gives a whole family of examples for any fixed $B$ meeting the above conditions.

Ad iii): The fundamental group $\pi_1(Q,e)$ is abelian since $(Q,\cdot)$ is an $H$-space. Thus $\pi_1(Q,e) \cong H_1(Q;\Z)$ by Hurewicz, but the latter is a finitely generated $\Z$-module since $Q$ is a closed manifold, therefore $\pi_1(Q,e)$ contains a $\Z$-copy generated by some homotopy class $\eta$. Let $\gamma$ be a representative of $\eta$ and denote by $\mathcal{L}_1Q$ its loop space component. Associated to $\gamma$ we have a section
$$s \, \colon Q \longrightarrow \mathcal{L}_1Q, \; s(q):= q \cdot \gamma(\cdot).\footnote{Note that each loop $s(q)$ is freely homotopic to $s(e)=\gamma$ since $Q$ is path connected.}$$ While this section is not necessarily induced from an $S^1$-action on $Q$, we will shortly see that its existence is still sufficient to run the proof of Theorem \ref{theorem Main Theorem Introduction} and Corollary \ref{corollary Sigma non-degenerate}, see Remark \ref{remark S1 action suffices}.
 
Let $Q$ be as Theorem \ref{theorem main section existence}. We proceed as in the introduction and define 
$$\alpha_m:=(\mathcal{I}_m)_*\alpha_1 \in H_n(\mathcal{L}_{mk}Q) \setminus\{0\}.$$ Recall that we deduced $\alpha_m \neq 0$ using the non-trivial section $s \, \colon Q \to \mathcal{L}_kQ$. 
Finally define
$$\tau_m\, :=\sqrt{c_{\alpha_m}(K)}, \; \tau_m^{\pm}\, :=\sqrt{c_{\alpha_m}(G^{\pm})}.\footnote{We are suppressing $d$ from the notation of $\tau_m$ once again.}$$

The following lemma will be key for the growth control and is based on the pinching machinery developed in Section \ref{section Pinching}.

\begin{lemma}\label{lemma iteration spectral bound}
We have 
$$\tau_{m}\leq m \cdot \tau_1^-=  m \cdot  \sqrt{c_{\alpha_1}(L)}, \quad \forall m \in \N.$$
\end{lemma}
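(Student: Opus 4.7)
The plan is to combine the comparisons from the pinching section with the elementary scaling of the kinetic energy under iteration. By Corollary \ref{corollary - spectral invariants do not depend on d, middle ones have bounded range} we already know
$$c_{\alpha_m}(K) \leq c_{\alpha_m}(L), \qquad c_{\alpha_1}(G^-) = c_{\alpha_1}(L),$$
so the inequality to prove reduces, after taking squares, to
$$c_{\alpha_m}(L) \leq m^2\cdot c_{\alpha_1}(L), \quad \forall m \in \N.$$

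To establish this, I would work with the singular-homology definition of minimax values, which applies to $L$ by virtue of Proposition \ref{proposition singular and morse agree} and Proposition \ref{proposition LS spectral invariant and limit for degenerate Lagrangians agree}. The key computation is how the kinetic energy behaves under the $m$-iteration map $\mathcal{I}_m \, \colon \mathcal{L}Q \to \mathcal{L}Q$. For any loop $q \in \mathcal{L}Q$, the loop $\mathcal{I}_m q$ is (up to reparametrisation) $q$ traversed $m$ times at $m$-fold speed, so $\|\frac{d}{dt}(\mathcal{I}_m q)(t)\|^2 = m^2 \|\dot q(mt)\|^2$ almost everywhere, and by periodicity of $q$
$$\mathcal{E}_L(\mathcal{I}_m q) = \int_0^1 \frac{1}{2} m^2 \|\dot q(mt)\|^2\, dt = m^2\int_0^1 \frac{1}{2}\|\dot q(s)\|^2\, ds = m^2\, \mathcal{E}_L(q).$$

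Now pick any singular cycle $\eta \in \alpha_1 \in H_n(\mathcal{L}_k Q)$. By functoriality, the push-forward $(\mathcal{I}_m)_*\eta$ is a cycle representing $\alpha_m$, and its support satisfies $|(\mathcal{I}_m)_*\eta| \subseteq \mathcal{I}_m\bigl(|\eta|\bigr)$. Combining this with the scaling identity above yields
$$\mathcal{E}_L\bigl((\mathcal{I}_m)_*\eta\bigr) \;=\; \max_{q\in |(\mathcal{I}_m)_*\eta|} \mathcal{E}_L(q) \;\leq\; \max_{q \in |\eta|} \mathcal{E}_L(\mathcal{I}_m q) \;=\; m^2 \, \mathcal{E}_L(\eta).$$
Taking the infimum over $\eta \in \alpha_1$ and using that every $(\mathcal{I}_m)_*\eta$ represents $\alpha_m$ gives
$$c_{\alpha_m}(L) \;\leq\; \inf_{\eta \in \alpha_1} \mathcal{E}_L\bigl((\mathcal{I}_m)_*\eta\bigr) \;\leq\; m^2 \inf_{\eta\in\alpha_1}\mathcal{E}_L(\eta) \;=\; m^2 \, c_{\alpha_1}(L).$$

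Chaining everything together produces
$$\tau_m^2 = c_{\alpha_m}(K) \leq c_{\alpha_m}(L) \leq m^2 \, c_{\alpha_1}(L) = m^2 \, (\tau_1^-)^2,$$
and the lemma follows by taking square roots. The only substantive step is the scaling identity for $\mathcal{E}_L$ under $\mathcal{I}_m$; the rest is a formal consequence of the minimax definition and the Corollary from the pinching section. There is no real obstacle once one is allowed to work with $L$ (rather than with the non-convex $F$) thanks to the pinching inequality.
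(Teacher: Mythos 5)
Your proposal is correct and follows essentially the same route as the paper: reduce to $c_{\alpha_m}(L)\leq m^2 c_{\alpha_1}(L)$ via the pinching corollary, then use the quadratic scaling of the kinetic energy under the iteration map applied to representatives of $\alpha_1$ pushed forward by $(\mathcal{I}_m)_*$. The only cosmetic difference is that the paper picks a minimizing sequence of representatives converging to a critical value via LS theory, while you take the infimum over all representatives directly, which is equally valid.
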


\begin{proof}
First of all $$c_{\alpha_m}(K) \leq c_{\alpha_m}(G^-)=c_{\alpha_m}(L),$$
by Corollary \ref{corollary - spectral invariants do not depend on d, middle ones have bounded range}.
Let $\left(\sigma^i\right)_{i \in \N}$ be a sequence of representatives of the singular homology class $\alpha_1 \in H_n(\mathcal{L}_kQ)$ such that
$$c_{\alpha_1}(L)=\lim_{i \to \infty} \sup_{v \in \Delta^n}\mathcal{E}\left(\sigma^i(v)\right)=\mathcal{E}(q_1),\footnote{Here we are implicitly assuming that $\sigma^i$ is not a sum of several $n$-simplices --- otherwise we would need to take the supremum of $\mathcal{E}$ over the union of the images of all summands. This assumption is purely for notational convenience.}$$
with $q_1$ a critical point of $\mathcal{E}$ inside $\mathcal{L}_kQ$  --- such a sequence and critical point exist by classical LS theory. By definition we know that $$\mathcal{I}_m \circ \sigma^i \, \colon \Delta^n \to \mathcal{L}_{mk}Q$$
defines a representative of $\alpha_m$. In particular
$$c_{\alpha_m}(L) \leq \lim_{i \to \infty}\sup_{v \in \Delta^n} \mathcal{E}(\mathcal{I}_m \circ \sigma^i(v)).$$
However, for every $v \in \Delta^n$ we have
\begin{align*}
\mathcal{E}(\mathcal{I}_m \circ \sigma^i(v))=\mathcal{E}(\underbrace{\sigma^i(v) * \dots * \sigma^i(v)}_{m\text{-times}})=m^2 \cdot \mathcal{E}(\sigma^i(v)),
\end{align*}
therefore by choice of $\sigma^i$ we obtain
$$c_{\alpha_m}(L) \leq m^2 \cdot \lim_{i \to \infty} \sup_{v \in \Delta^n}\mathcal{E}(\sigma^i(v)) =m^2 \cdot c_{\alpha_1}(L).$$
Taking square roots and putting everything together we see that
$$\tau_{m}=\sqrt{c_{\alpha_m}(K)}\leq \sqrt{c_{\alpha_m}(L)} \leq m \sqrt{c_{\alpha_1}(L)}=m \cdot \tau_1^-,$$
which concludes the proof.
\end{proof}

Now denote by $$x_m \in \mathcal{L}_{mk}T^*Q$$ 
the carrier of $\tau_m$ and recall the notation $\bar{x}_m$ from Section \ref{section index relations} to denote the corresponding Reeb orbit on $\Sigma$. Since $\alpha_m$ lives in degree $n$, Proposition \ref{proposition index inequality for reeb orbits} implies
$$n-\frac{\nu(\bar{x}_m)}{2} \leq \mu_{\mathrm{RS}}(\bar{x}_m) \leq n+\frac{\nu(\bar{x}_m)}{2}.$$

\begin{notation}
If $\bar{x} \, \colon [0,T] \to T^*Q$ is $T$-periodic, we define  $\bar{x}^r=\underbrace{\bar{x} \diamond \cdots \diamond \bar{x}}_{r-\mathrm{times}} \, \colon [0,rT] \to T^*Q$ to be the $r$-th iterate of $\bar{x}$ as in \cite{ggp2008}.
\end{notation}

The homotopy classes of $\bar{x}_m$ are not enough to distinguish the carriers from each other --- it could very well happen that the images of $\bar{x}_m^r$ and $\bar{x}_{rm}$ are the same for infinitely many $r \in \N$. The next result shows that this scenario does not occur under an additional assumption on the nullity of the Reeb orbits.

\begin{proposition}\label{proposition iteration distinct}
Assume that all the closed Reeb orbits $y$ on $\Sigma$ satisfy $$\nu(y) \leq n-1.$$ Let $r \in \N$, and $\bar{x}_m, \, \bar{x}_{rm}$ corresponding to carriers $x_m, \, x_{rm}$ of $c_{\alpha_m}(K)$, $c_{\alpha_{rm}}(K)$, respectively. Then the following implication holds true:
$$r \geq 2n \implies \mathrm{im}\left(\bar{x}^r_m\right) \neq \mathrm{im}(\bar{x}_{rm}).$$
\end{proposition}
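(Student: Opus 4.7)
The plan is to argue by contradiction: suppose $r\geq 2n$ and $\mathrm{im}(\bar{x}_m^r)=\mathrm{im}(\bar{x}_{rm})$. My first step is to identify $\bar{x}_{rm}$ with $\bar{x}_m^r$ up to a time-shift. Both orbits iterate a common prime closed Reeb orbit $\tilde{x}\subseteq\Sigma$; write $\bar{x}_m=\tilde{x}^c$ and $\bar{x}_{rm}=\tilde{x}^d$ for positive integers $c,d$. Their underlying free loops in $Q$ represent $\eta^{mk}$ and $\eta^{rmk}$, so in $H_1(Q;\Z)\otimes\Q$ one has $c[\tilde{x}]=mk[\eta]$ and $d[\tilde{x}]=rmk[\eta]$. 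Since $[\eta]$ is non-torsion these identities force $d=rc$, hence $\bar{x}_{rm}=\tilde{x}^{rc}=\bar{x}_m^r$ up to a time-shift. Proposition \ref{proposition time-shift does not change index} then yields $\mu_{\mathrm{RS}}(\bar{x}_m^r)=\mu_{\mathrm{RS}}(\bar{x}_{rm})$ and $\nu(\bar{x}_m^r)=\nu(\bar{x}_{rm})$.

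With this identification in place, I would combine Proposition \ref{proposition index inequality for reeb orbits} on both sides with the GGP iteration inequality. Applied to $\alpha_{rm}\in H_n(\mathcal{L}_{rmk}Q)$ together with $\nu\leq n-1$, Proposition \ref{proposition index inequality for reeb orbits} yields the uniform upper bound
\[\mu_{\mathrm{RS}}(\bar{x}_m^r)\;\leq\;n+\frac{\nu(\bar{x}_{rm})}{2}\;\leq\;\frac{3n-1}{2}.\]
Symmetrically, the same proposition applied to $\alpha_m$ gives $\mu_{\mathrm{RS}}(\bar{x}_m)\geq (n+1)/2$. Invoking the iteration formula of de Gosson--de Gosson--Piccione \cite{ggp2008} in its Robbin--Salamon form (cf.\ Remark \ref{remark ggp and rs index agree}) should produce a lower bound on $\mu_{\mathrm{RS}}(\bar{x}_m^r)$ that grows linearly in $r$ with slope at least $(n+1)/2$ and an error of order $n$; for $r\geq 2n$ this lower bound strictly exceeds $(3n-1)/2$, contradicting the upper bound.

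The dual role played by the nullity hypothesis $\nu\leq n-1$ is the conceptual core of the argument: it narrows the carrier-index window around $n$ (producing the upper bound), and it simultaneously forces $\mu_{\mathrm{RS}}(\bar{x}_m)$ to remain bounded away from zero, which is what makes the iteration lower bound grow. I expect the main technical obstacle to be calibrating the quantitative form of the iteration inequality so that the threshold $r=2n$ is exactly sufficient (rather than some larger constant multiple of $n$); the homology/multiplicity identification in the first paragraph and the shift invariances established in Section \ref{section index relations} should otherwise be routine.
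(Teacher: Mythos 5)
Your proposal follows essentially the same route as the paper: contrapositive, equality of the Robbin--Salamon indices of $\bar{x}_m^r$ and $\bar{x}_{rm}$ via the time-shift invariance (your homology argument pinning down the multiplicities $d=rc$ is a correct, slightly more detailed justification of what the paper attributes to the ``moreover'' part of Proposition \ref{proposition index inequality for reeb orbits}), the index window $\left[n-\tfrac{\nu}{2},\,n+\tfrac{\nu}{2}\right]$, and the GGP iteration inequality, with the nullity hypothesis closing the arithmetic exactly at $r=2n$. The one imprecision is your description of the iteration error: \cite[Corollary 4.4]{ggp2008} gives $\bigl\vert \mu_{\mathrm{RS}}(\bar{x}_m^r)-r\,\mu_{\mathrm{RS}}(\bar{x}_m)\bigr\vert\leq \tfrac{n}{2}(r-1)$, an error growing linearly in $r$ rather than $O(n)$, so the resulting lower bound is $\tfrac{r+n}{2}$ (net slope $\tfrac12$, not $\tfrac{n+1}{2}$) --- which nevertheless exceeds your upper bound $\tfrac{3n-1}{2}$ precisely when $r\geq 2n$, so the calibration you flag as the remaining obstacle does work out.
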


\begin{proof}
We show the contrapositive: assume that $\bar{x}_m^r, \, \bar{x}_{rm}$ have the same image. In particular, the moreover part of Proposition \ref{proposition index inequality for reeb orbits} implies
$$\mu_{\mathrm{RS}}\left(\bar{x}^r_m\right)=\mu_{\mathrm{RS}}(\bar{x}_{rm}).$$

If $\Phi_{\bar{x}_m}$ is a vertically preserving symplectic trivialization of $\bar{x}_m\, \colon [0,T] \to T^*Q$, then its periodic extension to $[0,rT]$ defines a vertically preserving symplectic trivialization of $\bar{x}_{m}^r$. This together with Remark \ref{remark ggp and rs index agree} tells us that we can apply \cite[Corollary 4.4]{ggp2008} to obtain
$$\big \vert \mu_{\mathrm{RS}}\left(\bar{x}^r_m\right) - r \cdot \mu_{\mathrm{RS}}(\bar{x}_m) \big \vert \leq \frac{n}{2}\cdot \left( r-1 \right).$$
Combining this with the previous index inequality and the nullity assumption gives

\begin{align*}
n+\frac{n-1}{2}  \geq \mu_{\mathrm{RS}}(\bar{x}_{rm})=\mu_{\mathrm{RS}}\left(\bar{x}^r_m\right) &\geq r \cdot \mu_{\mathrm{RS}}(\bar{x}_m)-\frac{n}{2}(r-1)\\
&\geq r \cdot \left(n-\frac{\nu(\bar{x}_m)}{2}-\frac{n}{2} \right) + \frac{n}{2} \\
&\geq r \cdot \left( \frac{n}{2}-\frac{n-1}{2}\right)+\frac{n}{2} \\
&=\frac{r}{2}+\frac{n}{2}.
\end{align*}
Therefore $n - \frac{1}{2} \geq \frac{r}{2}$ and thus $r < 2n$.
\end{proof}

\subsection{Proof of the Main Theorem}

We can finally present the proof of Theorem \ref{theorem main section existence}.
\begin{proof}[Proof of Theorem \ref{theorem main section existence}]

Let $T>0$ be arbitrary and $t=T/2$. Pick $m' \in \N$ the maximal integer such that 
$$m'\sqrt{c_{\alpha_1}(L)} \leq t.$$
Let $d>0$ be so big that $\tau_{m}=\tau_{m,d}$ is defined for all $m=1,\dots,m'$ and such that the corresponding carriers by $x_m$ are elements in $\mathcal{P}(F)$ --- this is possible thanks to Proposition \ref{proposition precomposing with f does not change orbits} and the fact that the $x_m$ are not constant because their homotopy class is non-trivial.
Note that by Lemma \ref{lemma iteration spectral bound} we have
$$\tau_m \leq t, \quad \forall m=1,\dots,m'.$$

Fix $r \in \N$ such that $r \geq 2n$. We may assume that $m'>>r$ as $r$ is fixed throughout the proof. Observe that for any $s \in \N$, $x^s_m$ has the same homotopy class as $x_{sm}$. The corresponding Reeb orbits $\bar{x}^{s}_m$ and $\bar{x}_{sm}$ however, are geometrically distinct if $s \geq r$, due to Proposition \ref{proposition iteration distinct} and choice of $r$. 

If $N \in \N$ denotes the unique integer such that
$$r^{(N-1)} \leq m' \leq r^N,$$
then we have $N$ pairwise geometrically distinct Reeb orbits, namely
$$\bar{x}_1, \, \bar{x}_r, \, \bar{x}_{r^2}, \dots, \, \bar{x}_{r^{N-1}},$$
again due to choice of $r$ and Proposition \ref{proposition iteration distinct}. By construction 
$$\sqrt{\mathcal{A}_F(x_m)}=\tau_m \leq t,$$
in particular the period of the Reeb orbits $\bar{x}_m$ is less than $T=2t$, see Section \ref{section Reeb Orbits on Starshaped Domains}, and therefore
$$\mathcal{N}_\Sigma(T) \geq N.$$ The only thing left to do is finding a lower bound for $N$. To this end observe that by maximality of $m'$ we have
$$(m'+1) \sqrt{c_{\alpha_1}(L)} \geq t,$$
thus
\begin{align*}
m' \geq \frac{t}{\sqrt{c_{\alpha_1}(L)}} -1=\frac{T}{2\sqrt{c_{\alpha_1}(L)}} -1
\end{align*}
Moreover,
$$N \geq \frac{\log(m')}{\log(r)}$$
and therefore
\begin{align*}
\mathcal{N}_\Sigma(T) \geq N  \geq \frac{1}{\log(r)} \cdot \log \left( \frac{T}{2\sqrt{c_{\alpha_1}(L)}}-1 \right).
\end{align*}
Since any $r \geq 2n$ is allowed we can set $r=2n$ and obtain the desired result.
\end{proof}

\printbibliography

\end{document}